\newcommand*{\textbox}[2]{\colorbox{#1!30}{\parbox{.9\linewidth}{#2}}}
\newcommand{\bm}[1]{\boldsymbol{#1}}
\newcommand{\vb}{{\mathbf{b}}}
\newcommand{\vc}{{\mathbf{c}}}
\newcommand{\ve}{{\mathbf{e}}}
\newcommand{\vh}{{\mathbf{h}}}
\newcommand{\vr}{{\mathbf{r}}}
\newcommand{\vs}{{\mathbf{s}}}
\newcommand{\vu}{{\mathbf{u}}}
\newcommand{\vv}{{\mathbf{v}}}
\newcommand{\vx}{{\mathbf{x}}}
\newcommand{\vy}{{\mathbf{y}}}
\newcommand{\vz}{{\mathbf{z}}}
\newcommand{\vA}{{\mathbf{A}}}
\newcommand{\vB}{{\mathbf{B}}}
\newcommand{\vG}{{\mathbf{G}}}
\newcommand{\vH}{{\mathbf{H}}}
\newcommand{\vI}{{\mathbf{I}}}
\newcommand{\vO}{{\mathbf{O}}}
\newcommand{\vQ}{{\mathbf{Q}}}
\newcommand{\vS}{{\mathbf{S}}}
\newcommand{\vU}{{\mathbf{U}}}
\newcommand{\vV}{{\mathbf{V}}}
\newcommand{\vlam}{{\bm{\lambda}}}
\newcommand{\vpi}{{\bm{\pi}}}
\newcommand{\vtheta}{{\bm{\theta}}}
\newcommand{\vPhi}{{\bm{\Phi}}}
\newcommand{\vPsi}{{\bm{\Psi}}}
\newcommand{\vLam}{{\bm{\Lambda}}}
\newcommand{\cF}{{\mathcal{F}}}
\newcommand{\cI}{{\mathcal{I}}}
\newcommand{\cJ}{{\mathcal{J}}}
\newcommand{\cK}{{\mathcal{K}}}
\newcommand{\cM}{{\mathcal{M}}}
\newcommand{\cO}{{\mathcal{O}}}
\newcommand{\cR}{{\mathcal{R}}}
\newcommand{\cX}{{\mathcal{X}}}
\newcommand{\vareps}{\varepsilon}
\newcommand{\EE}{\mathbb{E}} 
\newcommand{\RR}{\mathbb{R}} 
\newcommand{\ZZ}{\mathbb{Z}} 
\renewcommand{\SS}{{\mathbb{S}}} 
\newcommand{\vzero}{\mathbf{0}} 
\newcommand{\vone}{{\mathbf{1}}} 
\newcommand{\prox}{{\mathbf{prox}}} 
\newcommand{\Proj}{{\mathrm{Proj}}} 
\newcommand{\st}{\mbox{ s.t. }}
\DeclareMathOperator*{\argmin}{arg\,min} 
\newcommand{\bc}{\begin{center}}
\newcommand{\ec}{\end{center}}
\newcommand{\bdm}{\begin{displaymath}}
\newcommand{\edm}{\end{displaymath}}
\newcommand{\beq}{\begin{equation}}
\newcommand{\eeq}{\end{equation}}
\newcommand{\bfl}{\begin{flushleft}}
\newcommand{\efl}{\end{flushleft}}
\newcommand{\bt}{\begin{tabbing}}
\newcommand{\et}{\end{tabbing}}
\newcommand{\beqn}{\begin{eqnarray}}
\newcommand{\eeqn}{\end{eqnarray}}
\newcommand{\beqs}{\begin{align*}} 
\newcommand{\eeqs}{\end{align*}}  
\def\R{\RR}
\def\uu{\bar{\vu}}
\def\xu{\bar{\vx}}
\def\yu{\bar{\vy}}
\newcommand*{\textlabel}[2]{%
	\edef\@currentlabel{#1}
	\phantomsection
	#1\label{#2}
}
\def\Span{\ensuremath{\operatorname{span}}}
\newtheorem{assum}{Assumption}
\def\vgap{\vspace*{.1in}}
\begin{document}

\title{Lower complexity bounds of first-order methods for convex-concave bilinear saddle-point problems}

\author{Yuyuan Ouyang \and Yangyang Xu}

\institute{Y. Ouyang \at Department of Mathematical Sciences, Clemson University, Clemson, SC\\
\email{yuyuano@clemson.edu}\\
Y. Xu \at Department of Mathematical Sciences, Rensselaer Polytechnic Institute, Troy, NY\\
\email{xuy21@rpi.edu}}

\date{\today}

\maketitle

\begin{abstract}
On solving a convex-concave bilinear saddle-point problem (SPP), there have been many works studying the complexity results of first-order methods. These results are all about upper complexity bounds, which can determine at most how many efforts would guarantee a solution of desired accuracy. In this paper, we pursue the opposite direction by deriving lower complexity bounds of first-order methods on large-scale SPPs. Our results apply to the methods whose iterates are in the linear span of past first-order information, as well as more general methods that produce their iterates in an arbitrary manner based on first-order information.
We first work on the affinely constrained smooth convex optimization that is a special case of SPP. Different from gradient method on unconstrained problems, we show that first-order methods on affinely constrained problems generally cannot be accelerated from the known convergence rate $O(1/t)$ to $O(1/t^2)$, and in addition, $O(1/t)$ is optimal for convex problems. Moreover, we prove that for strongly convex problems, $O(1/t^2)$ is the best possible convergence rate, while it is known that gradient methods can have linear convergence on unconstrained problems. Then we extend these results to general SPPs. It turns out that our lower complexity bounds match with several established upper complexity bounds in the literature, and thus they are tight and indicate the optimality of several existing first-order methods.
\\

\noindent {\bf Keywords} Convex optimization, saddle point problems, first-order methods, information-based complexity, lower complexity bound 
\vspace{0.3cm}

\noindent {\bf Mathematics Subject Classification} 90C25, 90C06, 90C60, 49M37, 68Q25

\end{abstract}

\section{Introduction}
In recent years, first-order methods have been particularly popular partly due to the huge scale of many modern applications. These methods only access the function value and gradient information of the underlying problems, and possibly also other ``simple'' operations. For example, on solving the constrained optimization problem $f^*:=\min_{\vx\in X} f(\vx)$, the projected gradient (PG) method 
$$\vx^{(t+1)}\gets \Proj_X(\vx^{(t)} - \alpha \nabla f(\vx^{(t)}))$$ is a first-order method if $\Proj_X$ is easy to evaluate such as projection onto a box constraint set. For convex problems, if $\nabla f$ is Lipschitz continuous and $\alpha$ is appropriately chosen, the PG method can have convergence rate in the order of $\frac{1}{t}$, namely, $f(\vx^{(t)})-f^* = O(\frac{1}{t})$, where $t$ is the number of gradient evaluations. Through smart extrapolation, the rate can be improved to $O(\frac{1}{t^2})$; see \cite{FISTA2009, nesterov2013gradient}. In addition, there exists an instance showing that the order $\frac{1}{t^2}$ cannot be further improved \cite{nesterov2004introductory} and thus is optimal.

In this paper, we consider the bilinear saddle-point problem (SPP):
\begin{equation}\label{eq:saddle-prob}
\min_{\vx\in X}\max_{\vy\in Y} f(\vx)+\langle \vA\vx-\vb, \vy\rangle - g(\vy).
\end{equation}
Here, $X\subseteq\RR^n$ and $Y\subseteq\RR^m$ are closed convex sets,  $f:\RR^n\to\RR$ and $g:\RR^m\to\RR$ are closed convex functions, $\vA\in\RR^{m\times n}$, and $\vb\in\RR^m$. We assume that 
the function $f$ is differentiable, and  $\nabla f$ is Lipschitz continuous with respect to a norm $\|\cdot\|$, namely, there is a constant $L_f>0$ such that
\begin{align}
\label{eq:L}
\|\nabla f(\vx_1) - \nabla f(\vx_2)\|_*\le L_f\|\vx_1-\vx_2\|,\, \forall\, \vx_1,\vx_2\in X,
\end{align}
where $\|\cdot\|_*$ denotes the dual norm of $\|\cdot\|$. In addition, we assume that $g$ is simple such that its proximal mapping can be easily computed. The scale of the problem is large, and it is expensive to form the Hessian of $f$ and solve or project onto a linear system of size $m\times n$.  Two optimization problems are associated with \eqref{eq:saddle-prob}. One is called the primal problem
\begin{align}
\label{eq:SPP}
\phi^*:=\min_{\vx\in X}\Set{\phi(\vx): = f(\vx) + \max_{\vy\in Y}\,\langle \vA\vx-\vb, \vy\rangle - g(\vy)},
\end{align}
and the other is the dual problem
\begin{align}
\label{eq:SPP-dual}
\psi^*:=\max_{\vy\in Y}\Set{\psi(\vy): = - g(\vy)+\min_{\vx\in X}\, \langle \vA\vx-\vb, \vy\rangle +f(\vx)}.
\end{align}
The weak duality always holds, i.e., 
\begin{equation}\label{eq:weak-duality}
\psi^*\le\phi^*.
\end{equation}
Under certain mild assumptions (e.g., $X$ and $Y$ are compact \cite{rockafellar2015convex}), the above inequality becomes an equality, namely, the strong duality holds.

Many applications can be formulated into an SPP. For instance, it includes as special cases all affinely constrained smooth convex optimization problems. To see this, let $Y=\RR^m$ and $g\equiv 0$. Then $\max_\vy\langle \vA\vx-\vb, \vy\rangle=0$ if $\vA\vx=\vb$ and $\infty$ otherwise, and thus \eqref{eq:SPP} becomes 
	\begin{align}
	\label{eq:ECO}
	f^*:=\min_{\vx\in X} \big\{f(\vx), \st \vA\vx = \vb\big\}.
	\end{align}

\subsection{Main goal}
We aim at answering the following question:
\begin{center}
\textbox{gray}{For any first-order method, what is the best possible performance on solving a general SPP?}
\end{center}
More precisely, our goal is to study the \emph{lower information-based complexity bound} of \emph{first-order methods} on solving the class of problems that can be formulated into \eqref{eq:saddle-prob}. In the literature, all existing works about first-order methods on solving saddle-point problems only provide upper complexity bounds. Establishing lower complexity bounds is important because they can tell us whether the existing methods are improvable and also because they can guide us to design ``optimal'' algorithms that have the best performance. To achieve this goal, we will construct worst-case SPP instances such that the complexity result of a first-order method to reach a desired accuracy is lower bounded by a problem-dependent quantity. 

In the above question, we say an iterative algorithm for solving \eqref{eq:saddle-prob} is a \emph{first-order method} if it accesses the information of the function $f$ and the matrix $\vA$ through a \emph{first-order oracle}, denoted by ${\cO}:\RR^n\times\RR^m\to\RR^n\times\RR^m\times\RR^n$. For an inquiry on any point $(\vx,\vy)\in \RR^n\times\RR^m$, the oracle returns 
\begin{align}
\label{eq:O}
{\cO}(\vx,\vy):=\big(\nabla f(\vx), \vA\vx, \vA^\top \vy\big).
\end{align}
Given an initial point $(\vx^{(0)},\vy^{(0)})$, a first-order method $\cM$ for solving SPPs, at the $t$-th iteration, calls the oracle on $(\vx^{(t)},\vy^{(t)})$ to collect the oracle information $\cO(\vx^{(t)},\vy^{(t)})$ and then obtains a new point $(\vx^{(t+1)},\vy^{(t+1)})$ by a rule $\cI_t$. The complete method $\cM$ can be described by the initial point $(\vx^{(0)},\vy^{(0)})\in X\times Y$ and a sequence of rules $\{{\cI}_t\}_{t=0}^{\infty}$ such that
\begin{align}
\label{eq:I}
\big(\vx^{(t+1)},\vy^{(t+1)},\xu^{(t+1)},\yu^{(t+1)}\big) = {\cI}_t\left(\vtheta; {\cO}(\vx^{(0)},\vy^{(0)}), \ldots, {\cO}(\vx^{(t)},\vy^{(t)})\right),\ \forall\, t\ge 0,
\end{align}
where $(\vx^{(t)},\vy^{(t)})\in X\times Y$ denotes the inquiry point, and $(\xu^{(t)},\yu^{(t)})\in X\times Y$ is the approximate solution output by the method. 
Note that we allow $\cM$ to possess the complete knowledge of the rest information in an SPP, e.g., the sets $X$ and $Y$, the function $g$, the vector $\vb$, and the Lipschitz constant $L_f$ and its associated norm. We use $\vtheta$ for
all the rest information. 

As an example of the first-order method in \eqref{eq:I}, we consider \eqref{eq:ECO} and the linearized augmented Lagrangian method (LALM) with iterative updates:
\begin{subequations}\label{eq:update-lalm}
\begin{align}
&\vx^{(t+1)}=\Proj_X\left(\vx^{(t)}-\frac{1}{\eta}\big(\nabla f(\vx^{(t)})+\vA^\top(\vlam^{(t)}+\vr^{(t)})\big)\right),\label{eq:update-lalm-x}\\
&\vlam^{(t+1)} = \vlam^{(t)}+\vr^{(t+1)}, \label{eq:update-lalm-lam}
\end{align}
\end{subequations}
where $\eta>0$ is a stepsize parameter, and $\vr^{(t)}=\vA\vx^{(t)}-\vb$. As a special case, assume that $X=\RR^n$, $\vx^{(0)}=\vzero$ and $\vlam^{(0)}=\vzero$. In such special case it is easy to observe that $\vlam^{(t)}=\sum_{j=1}^t  \vr^{(j)},\,\forall \,t\ge 0$, where we have used the convention $$\sum_{j=j_1}^{j_2} \vr^{(j)}=\vzero,\text{ if }j_1>j_2.$$ Hence, the update in \eqref{eq:update-lalm-x} becomes exactly
\begin{equation}\label{eq:lalm-x}
\vx^{(t+1)}= - \frac{1}{\eta}\left(\sum_{j=0}^t\left(\nabla f(\vx^{(j)})+ \vA^\top\vr^{(j)}\right)+\vA^\top\sum_{j=1}^t  \vr^{(j)} \right),\, \forall\, t\ge 0.
\end{equation} 
 Let $\cO(\vu,\vv) = (\nabla f(\vu), \vA\vu, \vA^\top \vv)$ and $\vu^{(0)}=\vzero$ and $\vv^{(0)}=\vzero$. We define the rules $\{\cI_k\}_{k=0}^\infty$ as follows:
\begin{enumerate}
\item $\cI_0: \big(\vtheta; \cO(\vu^{(0)},\vv^{(0)})\big) \mapsto (\vu^{(1)}, \vv^{(1)}, \uu^{(1)}, \bar{\vv}^{(1)})$ is given by 
$$\uu^{(1)}=\vu^{(1)}=\vzero, \, \bar{\vv}^{(1)}=\vv^{(1)}=\vA\vu^{(0)}-\vb;$$
\item For any $k\ge 1$, $\cI_{2k-1}: \big(\vtheta; \cO(\vu^{(0)},\vv^{(0)}),\ldots,\cO(\vu^{(2k-1)},\vv^{(2k-1)})\big) \mapsto (\vu^{(2k)}, \vv^{(2k)}, \uu^{(2k)}, \bar{\vv}^{(2k)})$ is given by
\begin{align*}
&\uu^{(2k)}=\vu^{(2k)}=-\frac{1}{\eta}\left(\sum_{j=1}^k\left(\nabla f(\vu^{(2j-2)})+\vA^\top\vv^{(2j-1)}\right)+\vA^\top\sum_{j=2}^k\vv^{2j-1}\right), \\
&\bar{\vv}^{(2k)}=\vv^{(2k)}=\vA\vu^{(2k-2)}-\vb;
\end{align*}
\item For any $k\ge 1$, $\cI_{2k}: \big(\vtheta; \cO(\vu^{(0)},\vv^{(0)}),\ldots,\cO(\vu^{(2k)},\vv^{(2k)})\big) \mapsto (\vu^{(2k+1)}, \vv^{(2k+1)}, \uu^{(2k+1)}, \bar{\vv}^{(2k+1)})$ is given by
\begin{align*}
&\uu^{(2k+1)}=\vu^{(2k+1)}=-\frac{1}{\eta}\left(\sum_{j=1}^k\left(\nabla f(\vu^{(2j-2)})+\vA^\top\vv^{(2j-1)}\right)+\vA^\top\sum_{j=2}^k\vv^{2j-1}\right), \\
&\bar{\vv}^{(2k+1)}=\vv^{(2k+1)}=\vA\vu^{(2k)}-\vb.
\end{align*}
\end{enumerate}
In the above defined rules, we update $\vu$ by every odd-numbered rule and $\vv$ by every even-numbered rule. Through comparing the above rules and \eqref{eq:lalm-x}, it is not difficult to verify that $\vu^{(2k)}=\vx^{(k)}$ and $\vv^{(2k+1)}=\vr^{(k)}$ for any $k\ge 0$. Therefore, the LALM can be described as \eqref{eq:I}.

Note that the above description satisfies Assumption \ref{assum:linear_span} below, 
namely, $\vx^{(t+1)}$ in \eqref{eq:lalm-x}  is a linear combination of all previous first-order information $\nabla f(\vx^{(j)})$ and $\vA^\top\vr^{(j)}$. However, for general $X$, the description in \eqref{eq:update-lalm} may not fall into the above linear span description, and we need use the description in \eqref{eq:I}.

Existing works (e.g., \cite{xu2017accelerated, gao2017first}) show that in $t$ iterations, the LALM for \eqref{eq:ECO} can generate an $O(\frac{1}{t})$-optimal solution $\bar{\vx}$, namely, $|f(\bar{\vx})-f(\vx^*)|=O(\frac{1}{t})$ and $\|\vA\bar{\vx}-\vb\|=O(\frac{1}{t})$. In addition, by smoothing technique, \cite{nesterov2005smooth} gives a first-order method for the problem \eqref{eq:SPP} and establishes its $O(\frac{1}{t})$ convergence rate result.  We will show that different from the projected gradient method, the order $\frac{1}{t}$ generally cannot be improved to $\frac{1}{t^2}$, and in addition it is optimal.   

\subsection{Main results}
We consider both convex and strongly convex cases. Here, without specifying details, we state the main results that are obtained in this paper. The following theorem gives the lower complexity bounds for affinely constrained problems in the form of \eqref{eq:ECO}.
\begin{theorem}[lower complexity bounds for affinely constrained problems]
	Let $m\le n$ and $t < \frac{m}{4}-2$ be positive integers, and $L_f>0$. For any first-order method $\cM$ that is described in \eqref{eq:I}, there exists a problem instance in the form of \eqref{eq:ECO} such that $\nabla f$ is $L_f$-Lipschitz continuous,  the instance has a primal-dual solution $(\vx^*,\vy^*)$, and 
		\begin{align*}
		\big|f( \xu^{(t)} )-f(\vx^*)\big|\ge & ~ \frac{3L_f\|\vx^*\|^2}{128(2t+5)^2} + \frac{\sqrt{3}\|\vA\|\cdot\|\vx^*\|\cdot\|\vy^*\|}{8(2t+5)}, 
		\\[0.1cm]
		\| \vA  \xu^{(t)} - \vb \| \ge &~  \frac{\sqrt{3}\|\vA\|\cdot\|\vx^*\|}{4\sqrt{2}(2t+5)}.
		\end{align*}
	where $ \bar\vx^{(t)} $ is the approximate solution output by $\cM$. In addition, given $\mu>0$, there exists an instance of \eqref{eq:ECO} such that $f$ is $\mu$-strongly convex, the instance has a primal-dual solution $(\vx^*,\vy^*)$, and 
	\begin{equation*}
	\|\xu^{(t)}-\vx^*\|^2 \ge \frac{5\|\vA\|^2\cdot \|\vy^*\|^2}{256\mu^2 (2t+5)^2},
	\end{equation*}
\end{theorem}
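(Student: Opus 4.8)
The plan is to build explicit worst-case instances of the affinely constrained problem \eqref{eq:ECO} in the spirit of Nesterov's classical lower-bound construction, but adapted so that the \emph{affine constraint} $\vA\vx=\vb$ — rather than the objective alone — is what obstructs fast convergence. The key design principle is to choose $\vA$ to be a bidiagonal (or tridiagonal) ``shift'' matrix whose action, together with $\vA^\top$, only propagates nonzero coordinates outward by one index per oracle call. Because a first-order method in \eqref{eq:I} learns $\vA\vx^{(t)}$ and $\vA^\top\vy^{(t)}$ and $\nabla f(\vx^{(t)})$, I would argue by induction that after $t$ oracle calls the inquiry and output points $\xu^{(t)}$ can have nonzero entries only in the first $O(t)$ coordinates. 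This ``zero-chain'' / information-propagation lemma is the workhorse: it confines $\xu^{(t)}$ to a low-dimensional subspace $\mathrm{span}\{\ve_1,\dots,\ve_{k}\}$ with $k=O(t)$, while the genuine solution $\vx^*$ is supported on coordinates far beyond index $k$ (this is why the hypothesis $t<\tfrac{m}{4}-2$ is needed — it guarantees enough ``unreached'' coordinates remain).

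For the convex case I would take $f$ to be a suitably scaled quadratic with Lipschitz constant $L_f$, and choose $\vb$ and $\vA$ so that the exact solution $\vx^*$ of $\min\{f(\vx):\vA\vx=\vb\}$ together with its dual multiplier $\vy^*$ have a clean closed form, typically with coordinates $x^*_i$ decaying linearly in $i$ (e.g. $x^*_i\propto (m-i)$ or similar), giving $\|\vx^*\|$ and $\|\vy^*\|$ explicit magnitudes. The heart of the estimate is then to lower bound the \emph{unavoidable} residual: since $\xu^{(t)}$ misses the tail coordinates, both the constraint violation $\|\vA\xu^{(t)}-\vb\|$ and the objective gap $f(\xu^{(t)})-f(\vx^*)$ inherit a floor proportional to the mass of $\vx^*$ on the unreached coordinates. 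I expect the objective-gap bound to split naturally into two terms — a quadratic term of order $L_f\|\vx^*\|^2/t^2$ coming from the smooth part, and a \emph{linear} (in $1/t$) cross term of order $\|\vA\|\,\|\vx^*\|\,\|\vy^*\|/t$ coming from the coupling with the constraint via the Lagrangian/duality — which is exactly the structure appearing in the stated bound, and the presence of the $O(1/t)$ term is precisely what shows acceleration to $O(1/t^2)$ is impossible.

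For the strongly convex case the construction is analogous but with $f$ now $\mu$-strongly convex (a shifted quadratic of the form $\tfrac{\mu}{2}\|\vx\|^2$ plus a coupling term), so that the relevant quantity is the solution-distance $\|\xu^{(t)}-\vx^*\|^2$. Here I would again use the zero-chain lemma to bound below the part of $\|\vx^*\|^2$ living on unreached coordinates, and tune the scaling so that $\|\vx^*-\xu^{(t)}\|^2$ is bounded below by a quantity of order $\|\vA\|^2\|\vy^*\|^2/(\mu^2 t^2)$; strong convexity forces the $1/\mu^2$ dependence because the solution's tail coordinates scale inversely with $\mu$. The main obstacle I anticipate is \emph{not} the information-propagation induction itself (which is standard once the shift structure of $\vA$ is fixed) but rather two coupled design choices: first, making the explicit worst-case instance valid for an \emph{arbitrary} method of the general form \eqref{eq:I} — i.e.\ one that need not produce iterates in the linear span — which typically requires a rotational/``resisting-oracle'' argument that fixes the instance only after observing the method's queries, or a reduction showing the span restriction is without loss of generality on these instances; and second, arranging all the scaling constants so that $\|\vA\|$, $\|\vx^*\|$, $\|\vy^*\|$ come out with exactly the clean numerical coefficients claimed in the theorem, which is a delicate but routine bookkeeping task once the qualitative floor on the residual is established.
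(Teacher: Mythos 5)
Your proposal matches the paper's proof in all essentials: the paper likewise uses a shift-structured constraint matrix (the anti-bidiagonal block $\vB$ in \eqref{eq:B}) to confine linear-span iterates to Krylov subspaces (Lemma \ref{lem:xt-sp}), builds explicit quadratic instances with closed-form primal--dual solutions (\eqref{eq:ECO_Hh0}, \eqref{eq:HQh}, and $\vH=\mu\vI$, $\vh=\vzero$ for the strongly convex case) whose objective gap splits into exactly the $L_f\|\vx^*\|^2/t^2$ and $\|\vA\|\cdot\|\vx^*\|\cdot\|\vy^*\|/t$ terms you predict, and then deploys precisely the rotational-invariance (resisting-oracle) argument you flag as the main obstacle (Propositions \ref{pro:krylov} and \ref{pro:linear_span_ECO}) to cover arbitrary methods of the form \eqref{eq:I}. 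The one point your sketch leaves vague is resolved in the paper as follows: the hypothesis $t<\frac{m}{4}-2$ and the constants $2t+5$ arise specifically from the rotation step, which doubles the Krylov index per iteration (the instance is built with $k=2t+4$), not from the zero-chain induction itself, which under the pure linear-span assumption only needs $t<\frac{m}{2}$.
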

For a general convex-concave bilinear saddle-point problem \eqref{eq:saddle-prob}, we obtain lower complexity bound results as summarized by the theorem below.
\begin{theorem}[lower complexity bounds for bilinear saddle-point problems]
	Let $m\le n$ and $t < \frac{m}{4}-2$ be positive integers, and $L_f>0$. For any first-order method $\cM$ that is described in \eqref{eq:I}, there exists a problem instance in the form of \eqref{eq:saddle-prob} such that $\nabla f$ is $L_f$-Lipschitz continuous, $X$ and $Y$ are Euclidean balls with radii $R_X$ and $R_Y$ respectively, and
	\begin{align*}
	\phi( \xu^{(t)} )-\psi(\yu^{(t)})\ge & ~ \frac{L_fR_X^2}{16(4t+9)^2} + \left(\frac{\sqrt{2}+2}{4}\right)\frac{\|\vA\|R_XR_Y}{4t+9},
	\end{align*}
	where $\phi$ and $\psi$ are the associated primal and dual objective functions in \eqref{eq:SPP} and \eqref{eq:SPP-dual}, and $(\xu^{(t)},\yu^{(t)})$ is the approximate solution output by $\cM$. In addition, given $\mu>0$, there exists an instance of \eqref{eq:saddle-prob} such that $f$ is $\mu$-strongly convex, $X$ and $Y$ are Euclidean balls with radii $R_X$ and $R_Y$ respectively, and
	$$\phi( \xu^{(t)} )-\psi(\yu^{(t)})\ge\frac{5\|\vA\|^2 R_Y^2}{512\mu(4t+9)^2}.$$
\end{theorem}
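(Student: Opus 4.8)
The plan is to exhibit explicit worst-case instances of \eqref{eq:saddle-prob}, with $X$ and $Y$ the Euclidean balls of radii $R_X$ and $R_Y$, on which every first-order method \eqref{eq:I} is forced to suffer the stated primal-dual gap, following the same information-spreading paradigm that underlies Theorem 1. I would build each instance from a structured coupling matrix $\vA$ whose action and transpose action each shift coordinate support by a single index, so that successive oracle responses $\nabla f(\vx^{(j)})$, $\vA\vx^{(j)}$, $\vA^\top\vy^{(j)}$ can enlarge the ``known'' coordinate window only by a bounded amount per call. The data $f$, $\vb$, $\vA$ would be chosen so that the saddle point $(\vx^*,\vy^*)$ lies on the two spheres ($\|\vx^*\|=R_X$, $\|\vy^*\|=R_Y$) with support spread across all $m$ coordinates, hence outside any low-dimensional window the method can reach within $t<\tfrac{m}{4}-2$ steps. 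Taking $g\equiv 0$ so that $\phi(\vx)=f(\vx)+R_Y\|\vA\vx-\vb\|$ and using weak duality \eqref{eq:weak-duality} in the form $\phi(\xu^{(t)})-\psi(\yu^{(t)})\ge\phi(\xu^{(t)})-\phi^*$, I would then reduce the whole statement to lower-bounding $\phi(\xu^{(t)})-\phi^*$.

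The heart of the argument is an information-spreading (zero-chain) lemma asserting that, starting from $(\vx^{(0)},\vy^{(0)})$, every output $\xu^{(t)}$ and $\yu^{(t)}$ lies in a subspace spanned by the first few standard basis vectors, growing by a fixed number of coordinates per iteration. For a linear-span method this confinement follows by a direct induction on the oracle history; for a general method of the form \eqref{eq:I} I would invoke the rotation/resisting-oracle device, replacing the template matrix $\vA$ by $\vU\vA\vV^\top$ and rotating $\vb$ and $f$ accordingly, with orthogonal $\vU,\vV$ chosen inductively to keep each (deterministically generated) query orthogonal to the hidden solution directions, so that every output again lands in the prescribed blind subspace. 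Because the gap simultaneously constrains the primal output $\xu^{(t)}$ (which probes the primal window through $\vA^\top\vy$) and the dual output $\yu^{(t)}$ (which probes the dual window through $\vA\vx$), the method cannot devote all of its calls to advancing a single frontier; the resulting interleaved advancement of the primal and dual windows roughly halves the per-iteration resolution and is what produces the denominator $4t+9$ in place of the $2t+5$ of Theorem 1.

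To obtain the additive convex bound I would take the instance as a direct sum of two orthogonal coordinate blocks, splitting the radii as $\|\vx_1^*\|=\|\vx_2^*\|=R_X/\sqrt2$ (and likewise for $Y$), so that $\phi(\xu^{(t)})-\phi^*$ decomposes into independent contributions. On the first block $f$ is a Nesterov-type hard smooth function of Lipschitz constant $L_f$, and the confinement reproduces the classical $O(1/t^2)$ smooth lower bound, yielding the term $\tfrac{L_fR_X^2}{16(4t+9)^2}$; on the second block $f$ is flat and the feasibility penalty $R_Y\|\vA\xu^{(t)}-\vb\|$, evaluated at a point whose support is truncated below the spread-out residual, yields the bilinear term $\tfrac{(\sqrt2+2)}{4}\cdot\tfrac{\|\vA\|R_XR_Y}{4t+9}$, where the $\sqrt2$ is exactly the radius-splitting factor. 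For the strongly convex case I would instead take a single $\mu$-strongly convex $f$ (e.g.\ $\tfrac{\mu}{2}\|\cdot\|^2$), use that $\phi$ inherits $\mu$-strong convexity in $\vx$ to get $\phi(\xu^{(t)})-\phi^*\ge\tfrac{\mu}{2}\|\xu^{(t)}-\vx^*\|^2$, and then bound $\|\xu^{(t)}-\vx^*\|^2$ below by the same confinement; since stationarity forces $\vx^*=-\tfrac{1}{\mu}\vA^\top\vy^*$ with $\|\vy^*\|=R_Y$, the untouched tail of $\vx^*$ has squared norm of order $\|\vA\|^2R_Y^2/(\mu^2(4t+9)^2)$, and multiplying by $\tfrac{\mu}{2}$ gives the claimed $\tfrac{5\|\vA\|^2R_Y^2}{512\mu(4t+9)^2}$. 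Note that the bilinear obstruction sharpens from $O(1/t)$ to $O(1/t^2)$ precisely because strong convexity of $f$ regularizes the primal recovery.

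I expect the main obstacle to be the information-spreading lemma against general (non-linear-span) methods: making the rotation argument rigorous requires producing one fixed rotated instance that simultaneously defeats the entire sequence of rules $\{\cI_t\}$ while keeping $\nabla f$ exactly $L_f$-Lipschitz (respectively $\mu$-strongly convex) and the saddle point on the two prescribed spheres. A secondary but delicate difficulty is the combinatorial bookkeeping behind the constant $4t+9$, namely verifying that the primal and dual support windows advance in genuine lockstep and that, in the convex construction, the smooth and bilinear blocks do not interfere, so that the two terms add with the stated constants.
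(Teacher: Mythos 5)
Your overall architecture is the paper's: a hard quadratic instance with the structured bidiagonal coupling matrix, Krylov-subspace confinement of the iterates, the inductive rotation device to defeat general methods of the form \eqref{eq:I} (this is exactly Proposition \ref{pro:linear_span_NCO}, built on Proposition \ref{pro:krylov}, where the Euclidean balls' rotational invariance is what makes the rotation legal), and the reduction via weak duality $\psi(\yu^{(t)})\le\psi^*\le\phi^*$ to lower-bounding $\phi(\xu^{(t)})-\phi^*$ alone. Your strongly convex half is essentially the paper's proof verbatim: $f=\frac{\mu}{2}\|\cdot\|^2$, strong convexity of $\phi$ giving $\phi(\xu^{(t)})-\phi^*\ge\frac{\mu}{2}\|\xu^{(t)}-\hat\vx\|^2$, the stationarity relation $\mu\vx^*=\vA^\top\vy^*$, $R_Y$ calibrated so that $\vy^*\in Y$, and the confinement bound \eqref{eq:bd-iter-diff}; that part would go through.

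The genuine gap is in your convex-case construction and in the bookkeeping picture underlying it. The paper does \emph{not} use a two-block direct sum: it reuses the single instance with data \eqref{eq:HQh} and balls \eqref{eq:XY_HQh}, where one and the same quadratic $f$ produces both terms — the gap $\min_{\vx\in\cK_{k-1}}f(\vx)-\phi^*$ already yields $\frac{1}{8}\bigl(\frac{L_f}{2}+\sqrt{2}L_A\bigr)k$ (the hypothesis $L_f\ge\|\vA\|$, which your proposal never imposes but the paper's Theorem \ref{thm:NCO_HQh_lb-complexity} requires, is needed precisely to absorb the cross term $\frac{L_A^2}{2L_f}$ in \eqref{eq:bd-min-f-diff}), and the feasibility penalty adds $\frac{L_AR_XR_Y}{2(2k+1)}$. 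Your split has three concrete problems. First, under a \emph{single} ball $X$ the two blocks do not decouple: the minimizer defining $\phi^*$ can shift mass between blocks, so ``decomposes into independent contributions'' is an unproved (and generally false) step unless you verify the joint KKT point respects your prescribed split. Second, splitting the radius as $R_X/\sqrt{2}$ halves $L_fR_X^2$, so your route cannot reach the stated constant $\frac{L_fR_X^2}{16(4t+9)^2}$; and the $\sqrt{2}$ in $\frac{\sqrt{2}+2}{4}$ is not a radius-splitting factor — it traces to the coefficient $\frac{L_A}{4\sqrt{2}}$ of the linear term $\vh$ in \eqref{eq:HQh}. Third, your explanation of $4t+9$ by primal--dual interleaving is a misreading: the doubling $k=2t+4$ already occurs in the affinely constrained general-method bound (denominator $2t+5=k+1$), and $4t+9=2k+1$ arises merely from re-expressing the bound through $R_X=\sqrt{k}(2k+1)$ and $R_Y=\sqrt{k}/2$; the dual output requires no confinement whatsoever, since weak duality applies to every $\yu^{(t)}\in Y$. (A minor further inaccuracy: in the paper's convex instance only $\|\vy^*\|=R_Y$ holds, while $\vx^*$ lies strictly inside $X$, so demanding both saddle components on the spheres is neither needed nor what the construction achieves.) Your scheme would plausibly produce a bound of the right \emph{order}, but as described it would fail to prove the statement with its stated constants, and the decoupling step would fail as written.
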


Comparing to upper complexity bounds of several existing first-order methods, we find that our lower complexity bounds are tight, up to the difference of constant multiples or logarithmic term.

\subsection{Literature review}

Among existing works on complexity analysis of numerical methods, many more are about showing upper complexity bounds instead of lower bounds. Usually, the upper complexity bounds are established on solving problems with specific structures. They are important because they can tell the users at most how many efforts would guarantee a desired solution. On the contrary, lower complexity bounds, which were first studied in the seminal work \cite{nemirovski1983problem}, are usually information-based and shown on solving a general class of problems. Their importance lies in telling if a certain numerical method can still be improved for a general purpose and also in guiding the algorithm designers to make ``optimal'' methods. Although there are not many works along this line, each of them sets a base for designing numerical approaches. Below we review these lower complexity bound results on different classes of problems.

\vspace{0.1cm}
\noindent\textbf{Proximal gradient methods.} On solving convex problems in the form of $F^*:=\min_\vx \{F(\vx):=f(\vx)+g(\vx)\}$, the proximal gradient method (PGM) iteratively updates the estimated solution by acquiring information of $\nabla f$ and $\prox_{\eta g}$ at certain points, where the proximal mapping of a function $h$ is defined as
$$\prox_h(\vz)=\argmin_\vx h(\vx)+\frac{1}{2}\|\vx-\vz\|^2.$$ 
For the class of problems that have $L_f$-Lipschitz continuous $\nabla f$, the lower bound has been established in \cite{nemirovski1983problem,nemirovski1992information,nesterov2004introductory,guzman2015lower}. For example,  \cite[Theorem 2.1.7]{nesterov2004introductory} establishes a lower convergence rate bound: $F(\xu^{(t)})-F^* \ge \frac{3L_f\|\vx^{(0)}-\vx^*\|^2}{32(t+1)^2}$, where $\xu^{(t)}$ is the approximate solution output by PGM after $t$ iterations, and $\vx^*$ is one optimal solution. In addition, setting $\eta=\frac{1}{L_f}$, \cite{FISTA2009, nesterov2013gradient} show that the PGM can achieve $O(L_f/t^2)$ convergence rate, and more precisely, $F(\xu^{(t)})-F^* \le \frac{2L_f\|\vx^{(0)}-\vx^*\|^2}{(t+1)^2}$. Comparing the lower and upper bounds, one can easily see that they differ only by a constant multiple. Hence, the lower bound is tight in terms of the dependence on $t$, $L_f$, and $\|\vx^{(0)}-\vx^*\|$, and also the method given in \cite{FISTA2009, nesterov2013gradient} is optimal among all methods that only access the information of $\nabla f$ and $\prox_{\eta g}$. 

For the class of problems where $f$ has $L_f$-Lipschitz continuous gradient and also is $\mu$ strongly convex, the lower bound has been established in \cite{nemirovski1983problem,nemirovsky1991optimality,nemirovski1992information,nesterov2004introductory}. For example, \cite[Theorem 2.1.13]{nesterov2004introductory} establishes a lower convergence rate bound: $F(\xu^{(t)})-F^*\ge \frac{\mu\|\vx^{(0)}-\vx^*\|^2}{2}\big(\frac{\sqrt\kappa-1}{\sqrt\kappa+1}\big)^{2t}$, where $\kappa=\frac{L_f}{\mu}$ denotes the condition number. In addition, assuming the knowledge of $\mu$ and $L_f$, \cite[Theorem 6]{nesterov2013gradient} shows the convergence rate: $F(\xu^{(t)})-F^*\le \frac{L_f\|\vx^{(0)}-\vx^*\|^2}{4}\big(1+\frac{1}{\sqrt{2\kappa}}\big)^{-2t}$. Note that both lower and upper bounds are linear, and they have the same dependence on $\|\vx^{(0)}-\vx^*\|$ and $\kappa$. In this sense, the lower bound is tight, and the method is optimal.

\vspace{0.1cm}
\noindent\textbf{Inexact gradient methods.} On the convex problem $f^*:=\min_{\vx} f(\vx)$ for which only inexact approximation of $\nabla f$ is available, there have been several studies on the corresponding lower complexity bound. For example, on solving the convex stochastic program $f^*:=\min_{\vx} \{f(\vx):=\EE_\xi f_\xi(\vx)\}$, the stochastic gradient method (SGM) performs iterative update to the solution by accessing the stochastic approximation of subgradient $\tilde\nabla f$ at a certain point. For the class of problems whose $f$ is Lipschitz continuous, \cite{nemirovski1983problem, nesterov2004introductory} show that to find an $\vareps$-optimal solution $\bar\vx$, i.e., $f(\bar\vx)-f^* \le \vareps$, the algorithm needs to run $O(1/\vareps^2)$ iterations. On the other hand, as shown in \cite{nemirovski2009robust}, the order $1/\vareps^2$ is achievable with appropriate setting of algorithm parameters. Hence, the lower complexity bound $O(1/\vareps^2)$ is tight, and the stochastic gradient method is optimal on finding an approximate solution to the convex stochastic program. Further study of lower complexity bound of inexact gradient methods is also performed in \cite{devolder2014first}. When $f(\vx)$ has a special finite-sum structure, the lower complexity bound of randomized gradient method is studied in \cite{lan2017optimal,woodworth2016tight,arjevani2016dimension}. 

\vspace{0.1cm}
\noindent\textbf{Primal-dual first-order methods.} On an affinely constrained problem \eqref{eq:ECO} or the more general saddle-point problem \eqref{eq:saddle-prob}, many works have studied primal-dual first-order methods, e.g., \cite{chambolle2011first,esser2010general,condat2013primal, chen2014optimal, ouyang2015accelerated, lan2016iteration-alm, he2016accelerated, GXZ-RPDCU2016, gao2017first, xu2017iter-complexity-ialm, yan2018new-pds, hamedani2018primal}. To obtain an $\vareps$-optimal solution in a certain measure, an $O(1/\vareps)$ complexity result is established by many of them for convex problems. In addition, for strongly convex cases, an improved result of $O(1/\sqrt\vareps)$ has been shown in a few works such as \cite{goldstein2014fast,xu2017iter-complexity-ialm, xu2018accelerated-pdc,  hamedani2018primal}. All these results are about upper complexity bounds and none about lower bounds. Hence, it is unclear if these methods achieve the optimal order of convergence. Our results will fill the missing part and can be used to determine the optimality of these existing algorithms.

\vspace{.1cm}
\noindent\textbf{Others.} In adddition to the above list of lower complexity bounds, there are also a few results on special types of problems. The lower complexity bound of subgradient methods for uniformly convex optimization has been studied in \cite{juditsky2014deterministic}. Under the assumption that an algorithm has access to gradient information and is only allowed to perform linear optimization (instead of computing a projection), the lower complexity bounds have been studied in \cite{jaggi2013revisiting,lan2013complexity}. The lower complexity bounds of oblivious algorithms are studied in \cite{arjevani2016iteration}, where the way to generate new iterates by the algorithms are restricted. The lower complexity bound of stochastic gradient algorithms that preserves local privacy is studied in \cite{duchi2012privacy}.

\subsection{Notation and outline}
We use bold lower-case letters $\vx,\vy,\vc, \ldots$ for vectors and bold upper-case letters $\vA, \vQ,\ldots$ for matrices. For any vector $\vx\in\R^n$, we use $x_i$ to denote its $i$-th component. When describing an algorithm, we use $\vx^{(k)}$ for the $k$-th iterate. $\vA^\top$ denotes the transpose of a matrix $\vA$. We use $\vzero$ for all-zero vector and $\vone$ for all-one vector, and we use $\vO$ for an zero matrix and $\vI$ for the identity matrix. Their sizes will be specified by a subscript, if necessary, and otherwise are clear from the context. $\ve_{j,p}=[0,\ldots,0,1,0,\ldots,0]^\top\in\RR^p$ denotes the $j$-th standard basis vector in $\RR^p$. We use $\ZZ_{++}$ for the set of positive integers and $\SS_+^n$ for the set of all $n\times n$ symmetric positive semidefinite matrices. Without further specification, $\|\cdot\|$ is used for the Euclidean norm of a vector and the spectral norm of a matrix.

The rest of the paper is organized as follows. In section \ref{sec:linear_span_case}, for affinely constrained problems, we present lower complexity bounds of first-order methods that satisfy a linear span requirement. We drop the linear span assumption in section \ref{sec:ECO_general_case} and show lower complexity bounds of first-order methods that are described in \eqref{eq:I}. Section \ref{sec:lb-spp} is about the bilinear saddle-point problems. Lower complexity bounds are established there for first-order methods described in \eqref{eq:I}. In section \ref{sec:tightness}, we show the tightness of the established lower complexity bounds by comparing them with existing upper complexity bounds. Finally, section \ref{sec:conclusion} proposes a few interesting topics for future work and concludes the paper. 

\section{Lower complexity bounds under linear span assumption for affinely constrained problems}
\label{sec:linear_span_case}

In this and the next sections, we study lower complexity bounds of first-order methods on solving the affinely constrained problem \eqref{eq:ECO}. Our approach is to design a ``hard'' problem instance such that the convergence speed of any first-order method is lower bounded. The designed instances are convex quadratic programs in the form of
\begin{align}
\label{eq:ECO_H}
\begin{aligned}
f^*:=\min_{\vx\in\R^n}&\,\Set{ f(\vx):=\frac{1}{2}\vx^\top\vH\vx - \vh^\top\vx}
\\
\st &\ \vA \vx = \vb,
\end{aligned}
\end{align}
where $\vA\in\RR^{m\times n}$, and $\vH\in\SS_+^n$. Note that $\nabla f$ is Lipschitz continuous, and thus the above problem is a special case of \eqref{eq:ECO}. 

Throughout this section, we assume that the dimensions $m, n\in\ZZ_{++}$ are given and satisfy $m\le n$, and that a fixed positive integer number $k < \frac{m}{2}$ is specified. Our lower complexity analysis will be based on the performance of the $k$-th iterate of a first-order method on solving the designed instance. It should be noted that the assumption $k< \frac{m}{2}$ is valid if the problem dimensions $m$ and $n$ are very big and we do not run too many iterations of the algorithm. 

To have a relatively simple start, we focus on a special class of first-order methods in this section. More precisely, we make the following assumption.
\begin{assum}[linear span]
	\label{assum:linear_span}
The iterate sequence $\{\vx^{(t)}\}_{t=0}^\infty$ satisfies $\vx^{(0)}=\vzero$ and
	\begin{equation*}
\vx^{(t)} \in \Span\left\{ \nabla f( \vx^{(0)}),  \vA ^\top  \vr^{(0)}, \nabla f( \vx^{(1)}),  \vA ^\top  \vr^{(1)}, \ldots, \nabla f( \vx^{(t-1)}),  \vA ^\top  \vr ^{(t-1)}\right\},\, t\ge 1,
\end{equation*}
where $\vr=\vA\vx-\vb$ denotes the residual.
\end{assum}
In the context, we refer to the above assumption as the linear span assumption. It is easy to see that if $X=\RR^n$, then the LALM with updates in \eqref{eq:update-lalm} satisfies this assumption. In addition, it is not difficult to find rules $\{\cI_t\}_{t=0}^\infty$ such that the iterate sequence $\{\vx^{(t)}\}$ in Assumption \ref{assum:linear_span} can be obtained by \eqref{eq:I}. Note that we do not lose generality by assuming $\vx^{(0)}=\vzero$, because otherwise we can consider a shifted problem
$$\min_{\vx} f(\vx-\vx^{(0)}), \st \vA(\vx-\vx^{(0)})=\vb.$$ 

It should be noted that Assumption \ref{assum:linear_span} may not always hold, e.g., when there is projection involved in the description of a first-order algorithm.  
The lower complexity bound analysis can be performed without the linear span assumption, thanks to a technique introduced in \cite{nemirovski1992information,nemirovsky1991optimality} that utilize a certain rotational invariance of quadratic functions over a Euclidean ball. To facilitate reading, we defer the incorporation of such a technique to section \ref{sec:ECO_general_case}, 
where we will elaborate on the technical details and perform the lower complexity bound analysis without Assumption \ref{assum:linear_span}.

\subsection{Special linear constraints}
\label{sec:special-inst}

In this subsection, we describe a set of special linear constraints, which will be used to study the lower complexity bound of first-order methods satisfying Assumption \ref{assum:linear_span}. 

We let the matrix $\vLam$ and vector $\vc$ be
\begin{align}
	\label{eq:Lamc}
	\vLam= \left[\begin{array}{cc} \vB  &  \vO  \\  \vO  &  \vG \end{array}\right]\in\RR^{m\times n}
	\text{ and }
	\vc  =  \left[\begin{array}{c} \vone_{2k} \\  \vzero  \end{array}\right]\in\RR^m,	
\end{align}
where $\vG\in\R^{(m-2k)\times(n-2k)}$ is any matrix of full row rank such that $\|\vG\| = 2$, and 
\begin{align}
	\label{eq:B}
	\vB :=\left[\begin{array}{rrrrr}
	&  &  &-1 & \ 1\\
	& & \iddots & \iddots &\\
	& -1 & 1 &  & \\ 
	-1 & 1 &  & & \\
	1 &  &  & & 
	\end{array}\right]\in\RR^{2k\times 2k}.
\end{align}
All the designed ``hard'' instances in this paper are built upon $\vLam$ and $\vc$ given in \eqref{eq:Lamc}. 
Two immediate observations regarding \eqref{eq:Lamc} and \eqref{eq:B} are as follows. 
First,
for any $\vu:=(u_1,\ldots,u_{2k})^\top\in\R^{2k}$, we have
\begin{align*}
\|\vB\vu\|^2 = (u_{2k} - u_{2k-1})^2 + \cdots + (u_2 - u_1)^2 + u_1^2 \le 2(u_{2k}^2 + u_{2k-1}^2) + \cdots + 2(u_2^2 + u_1^2) + u_1^2\le 4\|\vu\|^2,
\end{align*}
so 
\begin{align}
	\label{eq:Bnorm}
	\|\vB\|\le 2.
\end{align}
Consequently, noting $\|\vG\|=2$ and the block diagonal structure of $\vLam$, we have
\begin{align}
	\label{eq:norm_Lam}
	\|\vLam\| = \max\{\|\vB\|,\|\vG\|\} = 2.
\end{align}
Second, 
it is straightforward to verify that 
\begin{align}
	\label{eq:invB}
	\vB^{-1}=\left[\begin{array}{cccc}& & & 1\\ & &  1 & 1\\ & \iddots & \vdots & \vdots\\ 1 & \cdots & 1 & 1 \end{array}\right].
\end{align}
Based on the two observations, we have the following lemma. 
\begin{lemma}
	\label{lem:Lamc2Ab}
	Let $\vLam$ and $\vc$ be given in \eqref{eq:Lamc}, and for any $L_A>0$, let 
	\begin{align}
		\label{eq:Ab}
		\vA = \frac{L_A}{2}\vLam\text{ and }\vb = \frac{L_A}{2}\vc.
	\end{align}
	Then $\|\vA\| = L_A$. In addition, for any vector $\vx^*=(x_1^*,\ldots,x_n^*)^\top$ that satisfies $\vA\vx^* = \vb$ we have
		\begin{equation*}
			x_i^*= i, \text{ for } 1\le i\le 2k.
		\end{equation*}
\end{lemma}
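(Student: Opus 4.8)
The plan is to separate the two assertions and dispatch each by a short computation. For the norm claim, I would simply combine the scaling in \eqref{eq:Ab} with the already-established spectral norm of $\vLam$: since $\vA = \frac{L_A}{2}\vLam$ and $\|\vLam\| = 2$ by \eqref{eq:norm_Lam}, positive homogeneity of the spectral norm gives $\|\vA\| = \frac{L_A}{2}\|\vLam\| = L_A$. No further work is needed here.

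For the second claim, I would first note that because $L_A > 0$, the linear system $\vA\vx^* = \vb$ is equivalent to $\vLam\vx^* = \vc$. I would then exploit the block-diagonal structure of $\vLam$ in \eqref{eq:Lamc}: writing $\vx^* = (\vu^{*\top}, \vw^{*\top})^\top$ with $\vu^*\in\RR^{2k}$ collecting the first $2k$ entries and $\vw^*\in\RR^{n-2k}$ the remaining ones, the system decouples into $\vB\vu^* = \vone_{2k}$ and $\vG\vw^* = \vzero$. Only the first block constrains $x_1^*,\ldots,x_{2k}^*$, so the values of $\vw^*$ (which lie in the nontrivial null space of the full-row-rank $\vG$) are irrelevant to the stated conclusion.

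It then remains to solve $\vB\vu^* = \vone_{2k}$. Since $\vB$ is invertible with inverse given explicitly in \eqref{eq:invB}, the solution is unique and equals $\vu^* = \vB^{-1}\vone_{2k}$. Reading off \eqref{eq:invB}, the $i$-th row of $\vB^{-1}$ is the indicator of the columns $j$ with $i + j \ge 2k + 1$, so its row sum, i.e.\ the $i$-th entry of $\vB^{-1}\vone_{2k}$, counts exactly the $j\in\{1,\ldots,2k\}$ satisfying this inequality and therefore equals $i$. This yields $x_i^* = u_i^* = i$ for $1 \le i \le 2k$.

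Equivalently, and perhaps more transparently, I would verify the candidate directly rather than invert: plugging $u_i^* = i$ into $\vB$ as defined in \eqref{eq:B}, each of the top $2k-1$ rows contributes a telescoping difference $-(2k-j) + (2k-j+1) = 1$, while the bottom row contributes $u_1^* = 1$, so $\vB(1,2,\ldots,2k)^\top = \vone_{2k}$; invertibility of $\vB$ then guarantees this is the only solution. There is no genuine obstacle in this lemma---the sole point requiring a moment's care is correctly matching the row-by-row structure of $\vB$ in \eqref{eq:B} to the index pattern of $\vB^{-1}$ in \eqref{eq:invB}, that is, confirming that the telescoping collapses the all-ones right-hand side into the arithmetic sequence $1,2,\ldots,2k$.
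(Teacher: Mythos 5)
Your proof is correct and follows essentially the same route as the paper's: the norm claim via $\|\vLam\|=2$ and homogeneity of the spectral norm, then the block decomposition reducing $\vA\vx^*=\vb$ to $\vB\vu^*=\vone_{2k}$, which is solved uniquely through the explicit inverse \eqref{eq:invB} to give $\vu^*=\vB^{-1}\vone_{2k}=(1,\ldots,2k)^\top$ (your direct telescoping verification is a harmless extra check the paper omits). The only nitpick is the parenthetical claim that $\vG$ has a \emph{nontrivial} null space, which holds only when $n>m$ — if $m=n$ the null space is $\{\vzero\}$ — but this aside plays no role in your argument.
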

	\begin{proof}
		By \eqref{eq:norm_Lam} and the definition of $\vA$ in \eqref{eq:Ab}, we immediately have $\|\vA\|=L_A$. To solve the linear system $\vA\vx = \vb$, we split $\vx$ into two parts as $\vx=(\vu^\top, \vv^\top)^\top$ with $\vu\in\RR^{2k}$ and $\vv\in\RR^{n-2k}$. Then by the block diagonal structure of $\vA$, we have from $\vA\vx = \vb$ and the definitions of $\vA$ and $\vb$ in \eqref{eq:Ab} that $\vB\vu = \vone_{2k}$.
		It follows from \eqref{eq:invB} that $\vB$ is nonsingular, and thus the linear system $\vB\vu = \vone_{2k}$ has a unique solution 
		\begin{align}
			\label{eq:ustar}
			\vu^*=\vB^{-1}\vone_{2k}=(1,\ldots,2k)^\top, 
		\end{align}
which completes the proof.
	\end{proof}

\subsection{Krylov subspaces}
In this subsection, we study two Krylov subspaces that are associated with the matrix $\vLam$ and vector $\vc$ described in \eqref{eq:Lamc}. 
In particular, we consider the Krylov subspaces
\begin{align}
\label{eq:KJ}
\cJ_i:=\Span\{\vc, (\vLam \vLam^\top) \vc,(\vLam \vLam^\top)^2\vc,\ldots,(\vLam \vLam^\top)^i\vc\}\subseteq\R^{m} \text{ and } \cK_i := \vLam^\top \cJ_i\subseteq\R^{n},\,\text{ for }i\ge 0.
\end{align}
As shown below in \eqref{eq:power-AtAb}, restricting on the first $2k$ entries, the above two Krylov subspaces reduce to
\begin{align}
		\label{eq:RFdef}
		\cF_i:=\Span\{\vone_{2k},\vB^2 \vone_{2k},\ldots,\vB^{2i}\vone_{2k}\}\text{ and }\cR_i:=\Span\{\vB \vone_{2k},\ldots,\vB^{2i+1}\vone_{2k}\}.
		\end{align}
We first establish some important properties of $\cF_i$ and $\cR_i$ as follows.	
\begin{lemma}
	\label{lem:krylov-FR}
Let $\cF_i$ and $\cR_i$ be defined in \eqref{eq:RFdef}. For any $0\le i\le 2k-1$, we have
	\begin{align}
		\label{eq:RFclaim}
		\cF_i = \Span\{\vone_{2k},\ve_{1,2k},\ve_{2,2k},\ldots,\ve_{i,2k}\},\quad \cR_i=\Span\{\ve_{2k-i,2k},\ve_{2k-i+1,2k},\ldots,\ve_{2k,2k}\},
		\end{align}
		and 
		\begin{align}
			\label{eq:BR}
			\vB\cR_i = \Span\{\ve_{1,2k},\ve_{2,2k},\ldots,\ve_{i+1,2k}\}\subseteq\cF_{i+1},
		\end{align}
where we have used the convention $\ve_{0,2k}=\vzero$.		
	\end{lemma}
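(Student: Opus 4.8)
The plan is to reduce the entire lemma to a single computation---the action of $\vB$ on a standard basis vector---and then obtain the three claims by linear algebra, one induction, and repeated telescoping. Throughout I write $\ve_j$ for $\ve_{j,2k}$ and keep the convention $\ve_0=\vzero$. Reading off the rows of $\vB$ in \eqref{eq:B} (equivalently, from the expansion of $\|\vB\vu\|^2$ preceding \eqref{eq:Bnorm}), I would first record the basis identity
\[
\vB\ve_j = \ve_{2k-j+1} - \ve_{2k-j}, \qquad 1 \le j \le 2k,
\]
where the term $\ve_{2k-j}$ vanishes when $j=2k$, i.e.\ $\vB\ve_{2k}=\ve_1$. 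Summing over $j$ telescopes to $\vB\vone_{2k}=\ve_{2k}$. A deliberate feature of this route is that I never use the explicit form of $\vB^2$; I always treat $\vB^2$ as the composition $\vB\circ\vB$, which sidesteps the non-standard last diagonal entry of $\vB^2$.

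The workhorses are two telescoping identities, valid for every $0\le i\le 2k-1$:
\begin{align*}
\vB\,\Span\{\vone_{2k}, \ve_1, \ldots, \ve_i\} &= \Span\{\ve_{2k-i}, \ldots, \ve_{2k}\},\\
\vB\,\Span\{\ve_{2k-i}, \ldots, \ve_{2k}\} &= \Span\{\ve_1, \ldots, \ve_{i+1}\}.
\end{align*}
Both drop out of the basis identity: for the first, $\vB\vone_{2k}=\ve_{2k}$ together with $\vB\ve_j=\ve_{2k-j+1}-\ve_{2k-j}$ for $1\le j\le i$ forms a triangular (hence full-rank) family whose span is $\{\ve_{2k-i},\ldots,\ve_{2k}\}$; for the second, the images $\vB\ve_{2k}=\ve_1$ and $\vB\ve_{2k-j}=\ve_{j+1}-\ve_j$ for $0\le j\le i$ again telescope to $\{\ve_1,\ldots,\ve_{i+1}\}$. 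The hypothesis $i\le 2k-1$ is precisely what keeps every index inside $[1,2k]$, so no term runs off either end.

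With these in hand I would prove \eqref{eq:RFclaim} for $\cF_i$ by induction on $i$, the base case being $\cF_0=\Span\{\vone_{2k}\}$. For the step I use the decomposition $\cF_i=\Span\{\vone_{2k}\}+\vB^2\cF_{i-1}$, immediate from \eqref{eq:RFdef}. Feeding in the induction hypothesis $\cF_{i-1}=\Span\{\vone_{2k},\ve_1,\ldots,\ve_{i-1}\}$ and composing the two workhorse identities with parameter $i-1$ gives $\vB^2\cF_{i-1}=\Span\{\ve_1,\ldots,\ve_i\}$, whence $\cF_i=\Span\{\vone_{2k},\ve_1,\ldots,\ve_i\}$. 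The claim for $\cR_i$ follows at once: since $\cR_i=\vB\cF_i$ by linearity, the first identity gives $\cR_i=\Span\{\ve_{2k-i},\ldots,\ve_{2k}\}$. Finally \eqref{eq:BR} is the second identity applied to $\cR_i$, and the inclusion $\vB\cR_i\subseteq\cF_{i+1}$ follows from the established form of $\cF_{i+1}$.

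The only genuinely delicate point is the bookkeeping at the upper boundary $i=2k-1$, where $\cF_{2k-1}=\Span\{\vone_{2k},\ve_1,\ldots,\ve_{2k-1}\}$ already equals all of $\R^{2k}$ (because $\vone_{2k}$ supplies the missing $\ve_{2k}$). There I must check that the inclusion in \eqref{eq:BR} still makes sense, and indeed $\vB\cR_{2k-1}=\Span\{\ve_1,\ldots,\ve_{2k}\}=\R^{2k}=\cF_{2k-1}\subseteq\cF_{2k}$, so it holds trivially. I expect this edge case---and more generally the careful verification that each telescoping sum neither loses rank nor leaves the range $[1,2k]$---to be the main thing to get right; once the basis identity is pinned down, the rest is routine.
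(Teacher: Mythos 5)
Your proof is correct and takes essentially the same approach as the paper: both rest on the basis identities $\vB\vone_{2k}=\ve_{2k,2k}$, $\vB\ve_{2k,2k}=\ve_{1,2k}$, $\vB\ve_{j,2k}=\ve_{2k-j+1,2k}-\ve_{2k-j,2k}$, and an induction built on the decompositions $\cF_i=\Span\{\vone_{2k}\}+\vB^2\cF_{i-1}$ and $\cR_i=\vB\cF_i$. The only tactical difference is how span equality is certified --- the paper proves a one-sided inclusion and upgrades it via nonsingularity of $\vB$ and the dimension count $\dim(\vB\cR_{s-1})=\dim(\cR_{s-1})=s$, while you verify both inclusions directly by telescoping --- an equally valid piece of bookkeeping, and your explicit handling of the boundary case $i=2k-1$ in \eqref{eq:BR} is sound.
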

	
\begin{proof}
From the definition of $\vB$ in \eqref{eq:B}, we have 
		\begin{align}
		\label{eq:Bprop}
		\begin{aligned}
		&\vB\vone_{2k} = \ve_{2k,2k}, \vB\ve_{2k,2k}=\ve_{1,2k},\\
		 &\vB\ve_{i,2k} = \ve_{2k-i+1,2k} - \ve_{2k-i,2k},\ \forall i=1,\ldots,2k-1.
		 \end{aligned}
		\end{align}
		Hence, from \eqref{eq:RFdef} and \eqref{eq:Bprop}, it holds that
		\begin{align*}
		\cF_0 = & \Span\{\vone_{2k}\},
\\
		\cR_0 = & \Span\{\vB\vone_{2k}\} = \Span\{\ve_{2k,2k}\}, 
		\\
		\vB\cR_0 = & \Span\{\vB\ve_{2k,2k}\} = \Span\{\ve_{1,2k}\},
		\\
\cF_1 = & \Span\{\vone_{2k}, \vB^2\vone_{2k}\} = \Span\{\vone_{2k}, \ve_{1,2k}\},
		\\
		\cR_1 = & \Span\{\vB\vone_{2k}, \vB^3\vone_{2k}\} = \Span\{\ve_{2k,2k}, \vB\ve_{1,2k}\} = \Span\{\ve_{2k-1,2k}, \ve_{2k,2k}\},
		\\
		\vB\cR_1 = & \Span\{\vB^2\vone_{2k}, \vB^4\vone_{2k}\} = \Span\{\ve_{1,2k}, \vB^2\ve_{1,2k}\} = \Span\{\ve_{1,2k},\ve_{2,2k}\}.
		\end{align*}
		Therefore, the results in \eqref{eq:RFclaim} and \eqref{eq:BR} hold for $i=0$ and $i=1$.
		
		Below we prove the results by induction. Assume that there is a positive integer $s < 2k$ and \eqref{eq:RFclaim} holds for $i = s-1$, namely,
		\begin{align}		
		\cF_{s-1}  = \Span\{\vone_{2k},\ve_{1,2k},\ve_{2,2k},\ldots,\ve_{s-1,2k}\},
		\ \cR_{s-1}=\Span\{\ve_{2k-s+1,2k},\ldots,\ve_{2k,2k}\}.\label{eq:RFhypo}
		\end{align}
		From \eqref{eq:Bprop} and \eqref{eq:RFhypo}, it follows that
		\begin{equation}\label{eq:vbspan}
		\vB\cR_{s-1}=\vB\Span\{\ve_{2k-s+1,2k},\ldots,\ve_{2k,2k}\} \subseteq \Span\{\ve_{s,2k},\ve_{s-1,2k},\ldots,\ve_{1,2k}\}.
\end{equation}
Since $\vB$ is nonsingular, 
$\dim\big(\vB\cR_{s-1}\big)=\dim\big(\cR_{s-1}\big)=s.$
Hence, from \eqref{eq:vbspan} and also noting $$\dim\big(\Span\{\ve_{s,2k},\ve_{s-1,2k},\ldots,\ve_{1,2k}\}\big)=s,$$ we have
$$\vB\cR_{s-1} = \Span\{\ve_{s,2k},\ve_{s-1,2k},\ldots,\ve_{1,2k}\}.$$

Observing $\Span\{\vB^2 \vone_{2k},\ldots,\vB^{2s}\vone_{2k}\}=\vB \cR_{s-1}$	, we conclude
$$\cF_s=\Span\{\vone_{2k}, \vB^2 \vone_{2k},\ldots,\vB^{2s}\vone_{2k}\} =  \Span\{\vone_{2k},\ve_{1,2k},\ve_{2,2k},\ldots,\ve_{s,2k}\}.$$
Through essentially the same arguments, one can use \eqref{eq:Bprop}, the above equation, and the fact $\cR_s=\vB\cF_s$ to conclude
$$\cR_s=\Span\{\ve_{2k-s,2k},\ldots,\ve_{2k,2k}\},$$
and thus we complete the proof.	
\end{proof}	
	
Through relating $\cJ_i$ (resp. $\cK_i$) to $\cF_i$ (resp. $\cR_i$), we have the following result.
\begin{lemma}
	\label{lem:krylov-JK}
Let $\cJ_i$ and $\cK_i$ be defined in \eqref{eq:KJ}.	For any $0\le i\le 2k-1$, it holds
	\begin{align}
	\label{eq:krylov}
	\cJ_i=\Span\{\vc,\ve_{1,m},\ve_{2,m},\ldots,\ve_{i,m}\},\ \cK_i = \Span\{\ve_{2k-i,n},\ve_{2k-i+1,n},\ldots,\ve_{2k,n}\},
	\end{align}
	and
	\begin{align}
		\label{eq:LamK}
		\vLam\cK_i = \Span\{\ve_{1,m},\ve_{2,m},\ldots,\ve_{i+1,m}\}\subseteq\cJ_{i+1}.
	\end{align}
	\end{lemma}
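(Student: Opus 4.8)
The plan is to reduce the statement about the Krylov subspaces $\cJ_i$ and $\cK_i$ in $\RR^m$ and $\RR^n$ to the already-established Lemma \ref{lem:krylov-FR} about $\cF_i$ and $\cR_i$ in $\RR^{2k}$, by exploiting the block-diagonal structure of $\vLam = \left[\begin{smallmatrix} \vB & \vO \\ \vO & \vG \end{smallmatrix}\right]$ and the fact that $\vc = (\vone_{2k}^\top, \vzero^\top)^\top$ is supported entirely on the first block. The key mechanical fact I would establish first is the relation hinted at in the excerpt (equation \eqref{eq:power-AtAb}): because $\vc$ vanishes on the $\vG$-block, every application of $\vLam\vLam^\top = \left[\begin{smallmatrix} \vB\vB^\top & \vO \\ \vO & \vG\vG^\top \end{smallmatrix}\right]$ keeps the vector supported on the first $2k$ coordinates, and on those coordinates it acts exactly as $\vB\vB^\top$. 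Since $\vB$ is the specific matrix in \eqref{eq:B}, one checks that $\vB^\top = \vB$ up to the sign structure — more precisely I would verify that $(\vLam\vLam^\top)^j \vc$ restricted to the first $2k$ entries equals $(\vB\vB^\top)^j \vone_{2k}$, and that this in turn equals $\vB^{2j}\vone_{2k}$ (using that $\vB\vB^\top$ agrees with $\vB^2$ on the relevant vectors, which follows from the explicit form of $\vB$). This is what justifies the claim that $\cJ_i$ "restricts to" $\cF_i$ and $\cK_i = \vLam^\top\cJ_i$ "restricts to" $\cR_i$.

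Granting that reduction, the first identity in \eqref{eq:krylov} is immediate: the generators $\vc, (\vLam\vLam^\top)\vc, \ldots, (\vLam\vLam^\top)^i\vc$ all live in the coordinate subspace $\RR^{2k}\times\{\vzero\}$, and their first-$2k$-coordinate parts span $\cF_i$, which by Lemma \ref{lem:krylov-FR} equals $\Span\{\vone_{2k}, \ve_{1,2k}, \ldots, \ve_{i,2k}\}$. Embedding back into $\RR^m$ and noting $\vc = (\vone_{2k}^\top,\vzero^\top)^\top$ gives $\cJ_i = \Span\{\vc, \ve_{1,m}, \ldots, \ve_{i,m}\}$. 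For $\cK_i = \vLam^\top\cJ_i$, I would apply $\vLam^\top$ to each generator: since $\cJ_i$ is supported on the top block, $\vLam^\top$ acts as $\vB^\top$ there, and $\vB^\top\cF_i$ corresponds to $\vB\cR_{i-1}$-type computations already done in Lemma \ref{lem:krylov-FR}; the upshot is that $\cK_i$ restricted to the first $2k$ coordinates is $\cR_i = \Span\{\ve_{2k-i,2k}, \ldots, \ve_{2k,2k}\}$, which embeds into $\RR^n$ as $\Span\{\ve_{2k-i,n}, \ldots, \ve_{2k,n}\}$, giving the second identity. Finally \eqref{eq:LamK} follows by applying $\vLam$ to $\cK_i$: on the top block this is $\vB\cR_i$, which by \eqref{eq:BR} of Lemma \ref{lem:krylov-FR} equals $\Span\{\ve_{1,2k},\ldots,\ve_{i+1,2k}\}$, embedding to $\Span\{\ve_{1,m},\ldots,\ve_{i+1,m}\}\subseteq\cJ_{i+1}$.

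The main obstacle I anticipate is the bookkeeping in the reduction step, specifically pinning down the precise relationship between $(\vLam\vLam^\top)^j\vc$ and $\vB^{2j}\vone_{2k}$. The notation $\cF_i$ and $\cR_i$ in \eqref{eq:RFdef} is phrased in terms of even and odd powers of $\vB$ (not $\vB\vB^\top$), so I need to be careful that $\vB^\top$ acting on these vectors really does produce the shifted powers of $\vB$ claimed, rather than introducing sign errors or coordinate reflections from the anti-diagonal structure of $\vB$. Once the identity $(\vLam\vLam^\top)^j\vc \leftrightarrow \vB^{2j}\vone_{2k}$ and its $\vLam^\top$-image $\leftrightarrow \vB^{2j+1}\vone_{2k}$ is nailed down cleanly, the rest is essentially transcription of Lemma \ref{lem:krylov-FR} from $\RR^{2k}$ to the zero-padded subspaces of $\RR^m$ and $\RR^n$. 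I would therefore front-load the argument with a clean statement and short verification of \eqref{eq:power-AtAb} and treat everything after it as routine.
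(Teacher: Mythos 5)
Your proposal is correct and takes essentially the same route as the paper: the paper's proof likewise front-loads the identity \eqref{eq:power-AtAb} via the block-diagonal structure of $\vLam$ and the support of $\vc$, concludes $\cJ_i = \cF_i\times\{\vzero_{m-2k}\}$ and $\cK_i = \cR_i\times\{\vzero_{n-2k}\}$, and then invokes Lemma \ref{lem:krylov-FR}. The obstacle you flag dissolves immediately, since $\vB$ in \eqref{eq:B} is exactly symmetric (not merely ``up to sign structure''), so $\vB\vB^\top=\vB^2$ holds identically and $(\vLam\vLam^\top)^j\vc$ has top block $\vB^{2j}\vone_{2k}$ with no sign or reflection issues.
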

	
	\begin{proof}
		Observe that for any $i=0,\ldots,2k-1$ we have
		\begin{align}\label{eq:power-AtAb}
		(\vLam\vLam^\top)^i \vc = \begin{pmatrix}
		\vB^{2i}\vone_{2k}\\ \vzero_{m-2k}
		\end{pmatrix}
		\text{ and }
		\vLam^\top (\vLam\vLam^\top)^i \vc = \begin{pmatrix}
		\vB^{2i+1}\vone_{2k}\\ \vzero_{n-2k}
		\end{pmatrix}.
		\end{align}
		Consequently, the definitions in \eqref{eq:KJ} becomes 
		\begin{align*}
		\cJ_i = \cF_i\times\{\vzero_{m-2k}\}\text{ and }\cK_i = \cR_i\times\{\vzero_{n-2k}\}.
		\end{align*}
		Therefore, the results in \eqref{eq:krylov} and \eqref{eq:LamK} immediately follow from Lemma \ref{lem:krylov-FR}. 		
	\end{proof}

Two remarks are in place for the Krylov subspaces $\cK_i$ and $\cJ_i$. 
First, 
by the definitions of $\cK_i$ and $\cJ_i$ in \eqref{eq:KJ} and the relation \eqref{eq:LamK}, we have 
\begin{align}
\label{eq:AonKJ}
\vLam\cK_i \subseteq \cJ_{i+1}\text{ and }\vLam^\top\cJ_i = \cK_i.
\end{align}
Second, 
by \eqref{eq:krylov} we have
\begin{align}
\label{eq:KJexpan}
\cK_{i-1}\subsetneq \cK_{i}\text{ and }\cJ_{i-1}\subsetneq\cJ_{i},\ \forall i=1,\ldots,2k-1.
\end{align}

\subsection{A lower complexity bound with positive $L_A$}
In this subsection, we establish a lower complexity bound of any first-order method that satisfies Assumption \ref{assum:linear_span} on solving \eqref{eq:ECO_H}. Our approach is to build an instance such that the iterate $\vx^{(t)}\in\cK_{t-1},\,\forall t\le k$ and then estimate the values
\begin{align}
	\label{eq:measures}
	\min_{\vx^{(t)}\in\cK_{t-1}}|f(\vx^{(t)}) - f^*|,\text{ and }\min_{\vx^{(t)}\in\cK_{t-1}}\|\vA\vx^{(t)} - \vb\|.
\end{align}
In the above equation, the former value is used to measure the performance of an algorithm by the objective value difference and the latter by the feasibility error. It should be noted that the absolute value is needed in the former measure, since it is possible that $f(\vx^{(t)})< f^*$ when $\vx^{(t)}$ is not a feasible point. 

The following lemma specifies the conditions on \eqref{eq:ECO_H} to guarantee $\vx^{(t)}\in\cK_{t-1},\,\forall t\le k$.
\begin{lemma}\label{lem:xt-sp}
	Given $L_A\in\RR$, let $\vA$ and $\vb$ be those in \eqref{eq:Ab}. Consider \eqref{eq:ECO_H} with $\vh\in\cK_{0}$ and $\vH$ satisfying $\vH\cK_{t-1}\subseteq\cK_t$ for any $1\le t\le k$, where $\cK_{i}$ is defined in \eqref{eq:KJ}. 
	Then under Assumption \ref{assum:linear_span}, we have $\vx^{(t)}\in \cK_{t-1}$ for any $1\le t \le k$. 
\end{lemma}

\begin{proof}
It suffices to prove that for any $t=1,\ldots,k$,
	\begin{align}
	\label{eq:span2K}
	\Span\left\{ \nabla f( \vx^{(0)}),  \vA ^\top  \vr^{(0)}, \nabla f( \vx^{(1)}),  \vA ^\top  \vr^{(1)}, \ldots, \nabla f( \vx^{(t-1)}),  \vA ^\top  \vr ^{(t-1)}\right\}\subseteq \cK_{t-1}.
	\end{align}
	We prove the result by induction. First, since $ \vx^{(0)}= \vzero $, from \eqref{eq:Lamc} and \eqref{eq:Ab} we have $ \vA ^\top  \vr^{(0)} = -\vA^\top \vb \in\Span\{\ve_{2k,n}\} =\cK_0$. In addition, from the condition $\vh\in\cK_0$, it follows that $\nabla f(\vx^{(0)}) = -\vh \in\cK_0$. Therefore, \eqref{eq:span2K} holds when $t=1$. Assume that for a certain $1\le s<k$, \eqref{eq:span2K} holds for $t=s$, 
	and consequently
	\begin{equation}\label{eq:assum-ind}
	\vx^{(s)}\in\cK_{s-1}.
	\end{equation} 
	We go to prove the result in \eqref{eq:span2K} for $t=s+1$, or equivalently $\nabla f(\vx^{(s)}),\vA^\top\vr^{(s)}\in\cK_s$, and finish the induction. From \eqref{eq:KJexpan} we have $\cK_0\subseteq\cK_s$. By this observation, noting $\vx^{(s)}\in \cK_{s-1}$, and using the conditions $\vh\in\cK_0$ and $\vH\cK_{s-1}\subseteq\cK_s$, we have $\nabla f(\vx^{(s)}) = \vH\vx^{(s)} - \vh\in\cK_s$.
	In addition, from \eqref{eq:AonKJ} and \eqref{eq:assum-ind}, we have $\vA^\top \vA \vx^{(s)} \in \cK_{s}$. Since $\vA^\top \vb\in \cK_0\subseteq \cK_s$, then $\vA^\top \vr^{(s)}= \vA^\top \vA \vx^{(s)} - \vA^\top \vb \in \cK_{s}$. Therefore, $\nabla f(\vx^{(s)})$ and $\vA^\top\vr^{(s)}$ are both in $\cK_s$, and by induction \eqref{eq:span2K} holds for any $1\le t\le k$. This completes the proof.
	\end{proof}

Based on the above lemma, we construct an instance of \eqref{eq:ECO_H} that satisfies the conditions $\vh\in\cK_{0}$ and $\vH\cK_{t-1}\subseteq\cK_t$ for any $1\le t\le k$. 
Given positive numbers $L_f$ and $L_A$, we build a quadratic program as 
	\begin{align}
		\label{eq:ECO_Hh0}
		\begin{aligned}
		f^*:=\min_{\vx\in\R^n}&\,\Set{ f(\vx) := L_f\left(\frac{1}{2}x_k^2 + \frac{1}{2}\sum_{i=2k+1}^{n}x_i^2\right)}
		\\
		\st &\ \vA\vx=\vb,
		\end{aligned}
\end{align}
where $\vA$ and $\vb$ are those in \eqref{eq:Ab}. Clearly, \eqref{eq:ECO_Hh0} is a special instance of \eqref{eq:ECO_H} with $\vh=\vzero$ and a diagonal $\vH\in\SS_+^n$. It is obvious to see $\vh\in\cK_0$, and since $\vH$ is diagonal, it is easy to verify $\vH\cK_{t-1}\subseteq\cK_t$ for any $1\le t\le k$. Hence from Lemma \ref{lem:xt-sp}, we have the following results.
\begin{lemma}\label{lem:xt-sp-Hh0}
Applying to \eqref{eq:ECO_Hh0} a first-order method that satisfies Assumption \ref{assum:linear_span}, we have $\vx^{(t)}\in \cK_{t-1}$ for any $1\le t \le k$. In addition, $f(\vx)=0$ and $\nabla f(\vx)=\vzero$ for all $\vx\in\cK_{k-1}$. 
\end{lemma}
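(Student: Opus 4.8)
The plan is to verify the two claims of Lemma~\ref{lem:xt-sp-Hh0} separately, both following almost immediately from the setup. The first claim, that $\vx^{(t)}\in\cK_{t-1}$ for all $1\le t\le k$, is a direct application of Lemma~\ref{lem:xt-sp}. First I would check that the instance \eqref{eq:ECO_Hh0} satisfies the two hypotheses of that lemma, namely $\vh\in\cK_0$ and $\vH\cK_{t-1}\subseteq\cK_t$ for all $1\le t\le k$. Since the objective $f(\vx)=L_f\big(\frac12 x_k^2+\frac12\sum_{i=2k+1}^n x_i^2\big)$ has no linear term, we read off $\vh=\vzero$, so $\vh\in\cK_0$ trivially. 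The Hessian is the diagonal matrix $\vH=L_f\,\Diag(\vd)$ whose diagonal has a $1$ in the $k$-th coordinate, zeros in coordinates $1,\ldots,2k$ other than $k$, and a $1$ in each coordinate $2k+1,\ldots,n$.

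The key point for the second hypothesis is the explicit description $\cK_{i}=\Span\{\ve_{2k-i,n},\ldots,\ve_{2k,n}\}$ from \eqref{eq:krylov} in Lemma~\ref{lem:krylov-JK}. A diagonal matrix maps each standard basis vector $\ve_{j,n}$ to a scalar multiple of itself, so it maps any coordinate subspace into itself; in particular $\vH\cK_{t-1}\subseteq\cK_{t-1}\subseteq\cK_t$, where the last inclusion is \eqref{eq:KJexpan}. Thus both hypotheses of Lemma~\ref{lem:xt-sp} hold, and the conclusion $\vx^{(t)}\in\cK_{t-1}$ for $1\le t\le k$ follows at once. I would present this as a one-line invocation rather than re-deriving anything.

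For the second claim I would use the coordinate description of $\cK_{k-1}$ to pin down exactly which entries of a vector in $\cK_{k-1}$ can be nonzero. By \eqref{eq:krylov} with $i=k-1$, every $\vx\in\cK_{k-1}$ is supported on coordinates $2k-(k-1)=k+1$ through $2k$; that is, $x_j=0$ unless $k+1\le j\le 2k$. In particular the coordinate $k$ and all coordinates $2k+1,\ldots,n$ vanish for such $\vx$. Since $f$ depends only on $x_k$ and on $x_{2k+1},\ldots,x_n$, it follows that $f(\vx)=L_f\big(\frac12 x_k^2+\frac12\sum_{i=2k+1}^n x_i^2\big)=0$ for every $\vx\in\cK_{k-1}$, and likewise $\nabla f(\vx)=\vH\vx$, whose only possibly nonzero components sit in coordinates $k$ and $2k+1,\ldots,n$, all of which are zero on $\cK_{k-1}$; hence $\nabla f(\vx)=\vzero$.

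There is no real obstacle here: the lemma is a bookkeeping consequence of the carefully chosen objective, whose ``active'' coordinates (the $k$-th and the tail block) are precisely those that lie \emph{outside} the support of $\cK_{k-1}$. The only thing to be careful about is matching indices correctly—verifying that $k<2k+1$ and that $k\notin\{k+1,\ldots,2k\}$, so that the coordinate $k$ indeed falls outside the support of $\cK_{k-1}$ while still being a coordinate where $\vH$ acts nontrivially. This index alignment is what makes the construction in \eqref{eq:ECO_Hh0} a genuinely ``hard'' instance: the iterates are trapped in a subspace on which the objective and its gradient are identically zero.
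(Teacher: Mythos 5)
Your proof is correct and follows exactly the paper's route: the paper likewise invokes Lemma \ref{lem:xt-sp} after noting $\vh=\vzero\in\cK_0$ and that the diagonal $\vH$ satisfies $\vH\cK_{t-1}\subseteq\cK_{t-1}\subseteq\cK_t$, and then verifies the vanishing of $f$ and $\nabla f$ on $\cK_{k-1}=\Span\{\ve_{k+1,n},\ldots,\ve_{2k,n}\}$ via the same support argument. Your write-up merely makes explicit the ``easily verified'' steps the paper leaves to the reader, and the index bookkeeping (coordinate $k$ and the tail block $2k+1,\ldots,n$ lying outside the support of $\cK_{k-1}$) is exactly right.
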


\begin{proof}
The first result directly follows from Lemma \ref{lem:xt-sp}, and the second one can be easily verified.
\end{proof}

The lemma below characterizes the primal-dual solution and the optimal objective value $f^*$ of \eqref{eq:ECO_Hh0}.

\begin{lemma}[primal-dual solution of \eqref{eq:ECO_Hh0}]
	\label{lem:ECO_Hh0_solutions}
Let $L_f$ and $L_A$ be positive numbers. The problem instance \eqref{eq:ECO_Hh0} has a unique optimal solution $\vx^*$ with a unique associated Lagrange multiplier $\vy^*$ given by
	\begin{equation}
	\label{eq:sol-lam=0}
	x_i^*=\left\{\begin{array}{ll} i, &\text{ if } 1\le i\le 2k,\\[0.1cm]
	0, & \text{ if } i\ge 2k+1, \end{array}\right.
	\end{equation}
	and
	\begin{equation}
	\label{eq:sol-lam=0-y}
	\vy^*= \frac{2kL_f}{L_A} \left[\begin{array}{c}\vzero_k \\ \vone_k \\ \vzero\end{array}\right],
	\end{equation}
	respectively. In addition, the optimal objective value is $f^* = \frac{L_f}{2}k^2$.
\end{lemma}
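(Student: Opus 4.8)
The plan is to treat \eqref{eq:ECO_Hh0} as a convex quadratic program with linear equality constraints and verify that the stated pair $(\vx^*,\vy^*)$ satisfies the Karush--Kuhn--Tucker system, and then argue separately that both the primal solution and the multiplier are unique. Since $f$ is convex and the feasible set is an affine subspace, the KKT conditions---primal feasibility $\vA\vx^* = \vb$ together with stationarity $\nabla f(\vx^*) = \vA^\top \vy^*$ (the first-order optimality condition for \eqref{eq:ECO_Hh0} with the multiplier normalized as in \eqref{eq:sol-lam=0-y})---are sufficient for global optimality, so the whole proof reduces to a few explicit computations plus two rank/convexity observations.

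First I would establish feasibility and the optimal value. Writing $\vx = (\vu^\top,\vv^\top)^\top$ with $\vu\in\R^{2k}$ and $\vv\in\R^{n-2k}$, the block-diagonal form of $\vA = \tfrac{L_A}{2}\vLam$ in \eqref{eq:Ab} splits the constraint $\vA\vx = \vb$ into $\vB\vu = \vone_{2k}$ and $\vG\vv = \vzero$. By Lemma \ref{lem:Lamc2Ab} (equivalently, the nonsingularity of $\vB$ recorded in \eqref{eq:invB}), the first block forces $\vu^* = (1,\ldots,2k)^\top$ uniquely, giving $x^*_i = i$ for $1\le i\le 2k$ and in particular $x^*_k = k$. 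The objective $f(\vx) = L_f\big(\tfrac12 x_k^2 + \tfrac12\|\vv\|^2\big)$ then has its $x_k$-term pinned at $\tfrac12 k^2$, so minimizing over the feasible set reduces to minimizing $\|\vv\|^2$ over $\Null(\vG)$; the unique minimizer there is $\vv = \vzero$, yielding \eqref{eq:sol-lam=0} and $f^* = \tfrac{L_f}{2}k^2$.

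Next I would check stationarity and pin down the multiplier. At $\vx^*$ only the $k$-th partial derivative of $f$ survives, so $\nabla f(\vx^*) = L_f k\,\ve_{k,n}$. Because $\vLam^\top$ is block diagonal and the $\vG$-block of $\vy^*$ in \eqref{eq:sol-lam=0-y} is zero, the identity $\nabla f(\vx^*) = \vA^\top\vy^* = \tfrac{L_A}{2}\vB^\top\vw$, with $\vw = \tfrac{2kL_f}{L_A}(\vzero_k^\top,\vone_k^\top)^\top$ the top block of $\vy^*$, reduces to the single linear-algebra identity $\vB^\top(\vzero_k^\top,\vone_k^\top)^\top = \ve_{k,2k}$. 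This is the heart of the computation: it follows from the column structure of $\vB$ in \eqref{eq:B}, i.e.\ the telescoping relations $\vB\ve_{i,2k} = \ve_{2k-i+1,2k}-\ve_{2k-i,2k}$ and $\vB\ve_{2k,2k}=\ve_{1,2k}$ already recorded in \eqref{eq:Bprop}, which make every entry but the $k$-th cancel. I expect this sign-and-index bookkeeping in $\vB^\top$ to be the only delicate step; everything else is routine.

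Finally I would settle uniqueness. Primal uniqueness follows from the argument above: feasibility determines $\vu^*$ completely, and $f$ restricted to the remaining free directions $\vv\in\Null(\vG)$ is strictly convex (coefficient $\tfrac{L_f}{2}>0$ on each $x_i^2$), so $\vv^*=\vzero$ is the unique minimizer. Multiplier uniqueness follows from the linear independence of the constraint gradients: $\vA$ has full row rank because $\vB$ is nonsingular and $\vG$ has full row rank, hence $\vA^\top$ is injective and the stationarity equation $\vA^\top\vy = \nabla f(\vx^*)$ admits at most one solution, namely the $\vy^*$ in \eqref{eq:sol-lam=0-y}. Combining the three steps proves the lemma.
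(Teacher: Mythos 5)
Your proposal is correct and follows essentially the same route as the paper: the identical block split $\vx=(\vu^\top,\vv^\top)^\top$ reducing $\vA\vx=\vb$ to $\vB\vu=\vone_{2k}$ and $\vG\vv=\vzero$, uniqueness of $\vu^*=(1,\ldots,2k)^\top$ from the nonsingularity of $\vB$, the trivial minimization $\vv^*=\vzero$, and hence $f^*=\tfrac{L_f}{2}k^2$, with the multiplier obtained from the KKT stationarity equation. The only (immaterial) presentational difference is at the multiplier step: the paper solves $\tfrac{L_A}{2}\vB^\top\vlam^*=L_fk\,\ve_{k,2k}$ explicitly via the inverse in \eqref{eq:invB}, whereas you verify the candidate through the forward identity $\vB^\top(\vzero_k^\top,\vone_k^\top)^\top=\ve_{k,2k}$ and then deduce uniqueness from injectivity of $\vA^\top$ (full row rank of $\vA$, since $\vB$ is nonsingular and $\vG$ has full row rank) --- logically equivalent to the paper's argument.
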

\begin{proof}
	Similar to the proof of Lemma \ref{lem:Lamc2Ab}, we split $\vx$ into two parts as $\vx=(\vu^\top, \vv^\top)^\top$ with $\vu\in\RR^{2k}$ and $\vv\in\RR^{n-2k}$. Then from the block structure of $\vA$, it follows that \eqref{eq:ECO_Hh0} is equivalent to the following two smaller problems:
	\begin{align}
	\label{eq:1st-subprob}
	&\min_\vu \frac{L_f}{2}u_k^2, \st \frac{L_A}{2}\vB\vu=\frac{L_A}{2}\vone_{2k},
	\\[0.1cm]
	\label{eq:2st-subprob}
	&\min_\vv \frac{L_f}{2}\|\vv\|^2, \st \frac{L_A}{2}\vG\vv=\vzero.
	\end{align} 
	By Lemma \ref{lem:Lamc2Ab}, the former problem \eqref{eq:1st-subprob} has a unique feasible (and thus optimal) solution $\vu^*$ that is given in \eqref{eq:ustar}. Clearly, the latter problem \eqref{eq:2st-subprob} has a unique solution  $\vv^*=\vzero$.
	Hence we obtain \eqref{eq:sol-lam=0}. Consequently,
	\begin{align*}
	f^* = \frac{L_f}{2}\left(u_k^*\right)^2 = \frac{L_f}{2}k^2.
	\end{align*}
	
	To derive the corresponding Lagrange multiplier, we split the dual variable to $\vy=(\vlam^\top,\vpi^\top)^\top$ 
	with $\vlam\in\RR^{2k}$ and $\vpi\in\RR^{m-2k}$. It follows from the Karush-Kuhn-Tucker (KKT) optimality conditions of \eqref{eq:1st-subprob} and the solution $\vx^*$ in \eqref{eq:sol-lam=0} that
	\begin{equation*}
	\frac{L_A}{2}\vB^\top \vlam^* = L_fu_k^*\ve_{k,2k} = L_fk\ve_{k,2k}, \quad \vG^\top\vpi^* = \vzero.
	\end{equation*}
	Since $\vG$ is full row rank, we have $\vpi^*=\vzero$. 
	In addition, from \eqref{eq:invB} we have
	\begin{align*}
	\vlam^* = \frac{2}{L_A}\left(\vB^\top\right)^{-1}(L_fk\ve_{k,2k}) = \frac{2L_f}{L_A}k\begin{bmatrix}
	\vzero_k\\\vone_k
	\end{bmatrix},
	\end{align*}
	and \eqref{eq:sol-lam=0-y} follows immediately. 
\end{proof}

By Lemmas \ref{lem:xt-sp-Hh0} and \ref{lem:ECO_Hh0_solutions}, we can easily estimate the values in \eqref{eq:measures} as follows.
\begin{lemma}
	\label{lem:ECO_Hh0_KkBounds}
Let $L_f$ and $L_A$ be positive numbers.	For the problem instance \eqref{eq:ECO_Hh0}, we have
	\begin{subequations}
		\label{eq:ECO_Hh0_KkBounds}
		\begin{align}
		\label{eq:ECO_Hh0_KkBounds_obj}
		\min_{\vx\in\cK_{k-1}}\big|f(\vx) - f^*\big|\ge & ~ \frac{3L_f\|\vx^*\|^2}{32(k+1)} + \frac{\sqrt{6}}{32(k+1)}L_A\|\vx^*\|\cdot\|\vy^*\|, 
		\\[0.1cm]
		\label{eq:ECO_Hh0_KkBounds_feas}
		\min_{\vx\in\cK_{k-1}}\|\vA\vx - \vb\| \ge &~  \frac{\sqrt{3}L_A\|\vx^*\|}{4\sqrt{2}(k+1)},
		\end{align}
	\end{subequations}
	where $(\vx^*,\vy^*)$ is the unique primal-dual solution pair of \eqref{eq:ECO_Hh0}, and $\cK_{k-1}$ is defined in \eqref{eq:KJ}.
\end{lemma}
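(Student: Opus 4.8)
The plan is to evaluate both minima in \eqref{eq:measures} exactly on the instance \eqref{eq:ECO_Hh0} and then reduce the two claimed inequalities \eqref{eq:ECO_Hh0_KkBounds_obj}--\eqref{eq:ECO_Hh0_KkBounds_feas} to elementary polynomial estimates in $k$, using the closed forms from Lemma \ref{lem:ECO_Hh0_solutions}. First I would record the quantities entering the right-hand sides: from \eqref{eq:sol-lam=0} we have $\|\vx^*\|^2=\sum_{i=1}^{2k}i^2=\frac{2k(2k+1)(4k+1)}{6}$, while \eqref{eq:sol-lam=0-y} gives $\|\vy^*\|=\frac{2k^{3/2}L_f}{L_A}$, so that $L_A\|\vy^*\|=2k^{3/2}L_f$ and the explicit $L_A$ cancels the $L_A$-dependence in the second summand of \eqref{eq:ECO_Hh0_KkBounds_obj}; recall also $f^*=\frac{L_f}{2}k^2$.

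For the objective bound, the key fact is already supplied by the second statement of Lemma \ref{lem:xt-sp-Hh0}: $f(\vx)=0$ for every $\vx\in\cK_{k-1}$. Since $f^*>0$, this yields at once
$$\min_{\vx\in\cK_{k-1}}\big|f(\vx)-f^*\big|=f^*=\frac{L_f}{2}k^2.$$
It then remains to verify $\frac{L_f}{2}k^2\ge\frac{3L_f\|\vx^*\|^2}{32(k+1)}+\frac{\sqrt{6}}{32(k+1)}L_A\|\vx^*\|\|\vy^*\|$, which I would prove by bounding each of the two summands separately by $\frac{L_f}{4}k^2$. Substituting the closed forms above, the first bound reduces to $(2k+1)(4k+1)\le 8k(k+1)$ and the second (after squaring) to $(2k+1)(4k+1)\le 8(k+1)^2$; both are linear-in-$k$ inequalities holding for all $k\ge1$.

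For the feasibility bound, the plan is to compute $\min_{\vx\in\cK_{k-1}}\|\vA\vx-\vb\|$ exactly. By Lemma \ref{lem:krylov-JK}, $\cK_{k-1}=\Span\{\ve_{k+1,n},\ldots,\ve_{2k,n}\}$, so writing $\vx=(\vu^\top,\vv^\top)^\top$ as in Lemma \ref{lem:Lamc2Ab} forces $\vv=\vzero$ and $u_1=\cdots=u_k=0$. The block-diagonal form of $\vA$ and $\vb$ in \eqref{eq:Ab} then gives $\vA\vx-\vb=\frac{L_A}{2}\big((\vB\vu-\vone_{2k})^\top,\vzero^\top\big)^\top$, hence $\|\vA\vx-\vb\|=\frac{L_A}{2}\|\vB\vu-\vone_{2k}\|$. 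Using the componentwise description of $\vB\vu$ (the expansion preceding \eqref{eq:Bnorm}, or \eqref{eq:Bprop}), exactly $k$ of the $2k$ entries of $\vB\vu$ depend only on the forced-zero coordinates $u_1,\ldots,u_k$ and therefore equal $0$, each contributing $(0-1)^2=1$ to $\|\vB\vu-\vone_{2k}\|^2$, while the remaining $k$ entries constitute an invertible triangular system in the free coordinates $u_{k+1},\ldots,u_{2k}$ and can be set equal to $1$. This gives $\min\|\vB\vu-\vone_{2k}\|^2=k$, so $\min_{\vx\in\cK_{k-1}}\|\vA\vx-\vb\|=\frac{L_A}{2}\sqrt{k}$. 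Comparing with \eqref{eq:ECO_Hh0_KkBounds_feas}, it remains to check $\frac{L_A}{2}\sqrt{k}\ge\frac{\sqrt{3}L_A\|\vx^*\|}{4\sqrt{2}(k+1)}$, which after squaring and substituting $\|\vx^*\|^2$ is again $(2k+1)(4k+1)\le 8(k+1)^2$, already verified.

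The main obstacle is the exact evaluation of the feasibility minimum, namely the bookkeeping that separates the $k$ forced-zero entries of $\vB\vu$ from the $k$ freely adjustable ones and confirms the free block is solvable; everything else is either an immediate consequence of previously established results (the vanishing of $f$ on $\cK_{k-1}$ and the explicit primal-dual solution) or a routine polynomial inequality in $k$. I would therefore write out the componentwise action of $\vB$ once and read off both the forced entries and the solvability of the remaining system directly from it.
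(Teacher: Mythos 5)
Your proposal is correct and follows essentially the same route as the paper: both rest on $f\equiv 0$ over $\cK_{k-1}$ (so the objective gap equals $f^*=\frac{L_f}{2}k^2$), the explicit norms of $(\vx^*,\vy^*)$, the observation that for $\vx\in\cK_{k-1}$ the last $k$ of the first $2k$ entries of $\vA\vx-\vb$ are forced to $-\frac{L_A}{2}$, and the identical polynomial reductions $(2k+1)(4k+1)\le 8k(k+1)$ and $(2k+1)(4k+1)\le 8(k+1)^2$. The only cosmetic deviations are that you evaluate the feasibility minimum exactly as $\frac{L_A}{2}\sqrt{k}$ where the paper only needs the one-line lower bound from \eqref{eq:LamK}, and you combine the two objective estimates by splitting $f^*$ into halves rather than via $\max\{a,b\}\ge\frac{a+b}{2}$ — equivalent arguments.
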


\begin{proof}
	Using the formula 
	\begin{equation}\label{eq:sum-i2}
	\sum_{i=1}^p i^2 = \frac{p(p+1)(2p+1)}{6}
	\end{equation}
	and the description of $\vx^*$ in \eqref{eq:sol-lam=0}, we have 
	\begin{equation}\label{eq:ECO_Hh0_nrm-x-lam=0}
	\| \vx ^*\|^2 =\sum_{i=1}^{2k}i^2= \frac{k(2k+1)(4k+1)}{3}.
	\end{equation}
	For any $\vx\in\cK_{k-1}$, we observe from \eqref{eq:Lamc}, \eqref{eq:Ab} and \eqref{eq:LamK} that $ \vA  \vx$ can only have nonzeros on its first $k$ components. Since the first $2k$ components of $\vb$ all equal $\frac{L_A}{2}$, we have
	\begin{equation}\label{eq:ineq-Ax-b-lem-Hh0}
	\|\vA\vx-\vb\|^2\ge \frac{k L_A^2}{4} \overset{\eqref{eq:ECO_Hh0_nrm-x-lam=0}}= \frac{3L_A^2\|\vx^*\|^2}{4(2k+1)(4k+1)}\ge \frac{3L_A^2\|\vx^*\|^2}{32(k+1)^2},
	\end{equation}
	and hence \eqref{eq:ECO_Hh0_KkBounds_feas} holds.

	In addition, as $\vx\in\cK_{k-1}$, we have from Lemma \ref{lem:xt-sp-Hh0} that $f(\vx)=0$. Hence, it follows from Lemma \ref{lem:ECO_Hh0_solutions} that
	\begin{equation*}
	\left|f( \vx )-f( \vx ^*)\right| = f^* = \frac{L_f k^2}{2}.
	\end{equation*}
	For $k\ge 1$, it is easy to verify that
	\begin{equation}\label{eq:obj-bd-t1}
	\frac{L_f k^2}{2}\overset{\eqref{eq:ECO_Hh0_nrm-x-lam=0}}= \frac{3L_fk\|\vx^*\|^2}{2(2k+1)(4k+1)} 
	\ge \frac{3L_f\|\vx^*\|^2}{16(k+1)}.
	\end{equation}
	Also from \eqref{eq:sol-lam=0-y}, we have 
	\begin{align}
	\label{eq:ECO_Hh0_nrm-y-lam=0}
	\|\vy^*\|^2=\frac{4L_f^2}{L_A^2}k^3, 
	\end{align}
	and it is not difficult to verify that
	\begin{equation}\label{eq:obj-bd-t2}
	\frac{L_f k^2}{2}\ge \frac{\sqrt{6}}{16(k+1)}2 L_f k^{\frac{3}{2}}\frac{\sqrt{k(2k+1)(4k+1)}}{\sqrt{3}} = \frac{\sqrt{6}}{16(k+1)} L_A\|\vx^*\|\cdot\|\vy^*\|,
	\end{equation}
where the second equality uses \eqref{eq:ECO_Hh0_nrm-x-lam=0} and \eqref{eq:ECO_Hh0_nrm-y-lam=0}. Therefore, \eqref{eq:ECO_Hh0_KkBounds_obj} follows from \eqref{eq:obj-bd-t1} and \eqref{eq:obj-bd-t2} and the fact $\max\{a,b\}\ge\frac{a+b}{2}$, and we complete the proof.	
\end{proof}

Using Lemmas \ref{lem:xt-sp-Hh0} through \ref{lem:ECO_Hh0_KkBounds} established above, we are now ready to show our first lower complexity bound result.
\begin{theorem}[lower complexity bound I under linear span assumption]
	\label{thm:ECO_Hh0_lb-complexity-lam=0}
	Let $m\le n$ be positive integers, and $L_f$ and $L_A$ be positive numbers. For any positive integer $t<\frac{m}{2}$, there exists an instance of \eqref{eq:ECO} such that $\nabla f$ is $L_f$-Lipschitz continuous, $\|\vA\|=L_A$, and it has a primal-dual solution $(\vx^*,\vy^*)$. In addition, for any algorithm on solving \eqref{eq:ECO}, if it satisfies Assumption \ref{assum:linear_span}, then we have
	\begin{subequations}
		\label{eq:ECO_Hh0_obj-feas-err-lam=0}
		\begin{align}
			\label{eq:ECO_Hh0_obj-err-lam=0} 
			\big|f( \vx^{(t)} )-f( \vx ^*)\big|\ge & ~ \frac{3L_f\|\vx^*\|^2}{32(t+1)} + \frac{\sqrt{6}}{32(t+1)}L_A\|\vx^*\|\cdot\|\vy^*\|, 
			\\[0.1cm]
			\label{eq:ECO_Hh0_feas-err-lam=0}
			\| \vA  \vx^{(t)} - \vb \| \ge &~  \frac{\sqrt{3}L_A\|\vx^*\|}{4\sqrt{2}(t+1)}.
		\end{align}
	\end{subequations}
\end{theorem}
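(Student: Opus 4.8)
The plan is to recognize that this theorem is essentially a repackaging of the lemmas already established in this subsection, obtained by aligning the construction parameter $k$ with the iteration index $t$. Given any positive integer $t<\frac{m}{2}$, I would set $k=t$, which is a legitimate choice because the standing assumption $k<\frac{m}{2}$ then reduces exactly to the hypothesis $t<\frac{m}{2}$. With this $k$, I take the problem instance \eqref{eq:ECO_Hh0}, which is a bona fide instance of \eqref{eq:ECO}: its objective has a diagonal Hessian $\vH=L_f\Diag(\cdots)$ with entries in $\{0,L_f\}$, so its largest eigenvalue is $L_f$ and hence $\nabla f$ is $L_f$-Lipschitz continuous; moreover, by Lemma \ref{lem:Lamc2Ab} together with \eqref{eq:Ab}, the constraint matrix satisfies $\|\vA\|=L_A$. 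Lemma \ref{lem:ECO_Hh0_solutions} then supplies the unique primal-dual solution $(\vx^*,\vy^*)$ required by the statement.

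Next I would invoke the linear span assumption. Since the method under consideration satisfies Assumption \ref{assum:linear_span}, Lemma \ref{lem:xt-sp-Hh0} guarantees that the $t$-th iterate lies in the Krylov subspace, $\vx^{(t)}\in\cK_{t-1}$. Because $t=k$, this is precisely $\vx^{(t)}\in\cK_{k-1}$. The key structural fact—already the heart of the preceding lemmas—is that $\cK_{k-1}$ is a strict subspace of $\R^n$ (see \eqref{eq:krylov}) whose coordinates cannot reach the part of the solution carrying the objective value and the feasibility residual; this is what forces a nonzero error at iteration $t$.

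It then remains only to pass from the iterate to the subspace minimum. Since $\vx^{(t)}\in\cK_{k-1}$, we trivially have
$$\big|f(\vx^{(t)})-f^*\big|\ge\min_{\vx\in\cK_{k-1}}\big|f(\vx)-f^*\big|,\qquad \|\vA\vx^{(t)}-\vb\|\ge\min_{\vx\in\cK_{k-1}}\|\vA\vx-\vb\|,$$
and Lemma \ref{lem:ECO_Hh0_KkBounds} lower-bounds both right-hand sides. Substituting $k=t$ into \eqref{eq:ECO_Hh0_KkBounds} converts the $(k+1)$ denominators into $(t+1)$ and yields exactly the two displayed inequalities \eqref{eq:ECO_Hh0_obj-err-lam=0} and \eqref{eq:ECO_Hh0_feas-err-lam=0}, completing the argument.

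I do not anticipate a genuine obstacle, since all the analytical content—the Krylov containment of the iterates and the explicit estimates of the subspace minima—was discharged in Lemmas \ref{lem:xt-sp-Hh0} through \ref{lem:ECO_Hh0_KkBounds}. The only point requiring care is the bookkeeping of the identification $k=t$: one must note that $\|\vx^*\|$ and $\|\vy^*\|$ appearing in the final bound are those of the instance built with $k=t$ (they grow with $k$), so the statement is correctly read as ``for each $t$ there exists an instance,'' with the solution norms measured on that instance rather than held fixed across $t$.
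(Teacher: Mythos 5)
Your proposal is correct and takes essentially the same route as the paper's own proof: set $k=t$, take the instance \eqref{eq:ECO_Hh0} (whose $L_f$-Lipschitz gradient, $\|\vA\|=L_A$, and primal-dual solution are supplied by Lemmas \ref{lem:Lamc2Ab} and \ref{lem:ECO_Hh0_solutions}), use Lemma \ref{lem:xt-sp-Hh0} to place $\vx^{(t)}\in\cK_{k-1}$, and pass to the subspace minima bounded in Lemma \ref{lem:ECO_Hh0_KkBounds}. Your closing bookkeeping remark, that $\|\vx^*\|$ and $\|\vy^*\|$ are measured on the instance built for each $t$, is also the correct reading of the statement.
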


\begin{proof} 
Set $k=t<\frac{m}{2}$ and	consider \eqref{eq:ECO_Hh0}. Clearly, \eqref{eq:ECO_Hh0} is an instance of  \eqref{eq:ECO}, its objective $f$ has $L_f$-Lipschitz continuous gradient, and $\|\vA\|=L_A$. Its existence of a primal-dual solution is guaranteed by Lemma \ref{lem:ECO_Hh0_solutions}. By Lemma \ref{lem:xt-sp-Hh0} and also noting $t=k$, we have $\vx^{(t)}\in \cK_{k-1}$. Hence, 
	\begin{align*}
		\big|f( \vx^{(t)} )-f( \vx ^*)\big| \ge \min_{\vx\in\cK_{k-1}}\big|f(\vx) - f^*\big|,\text{ and }\| \vA  \vx^{(t)} - \vb \|\ge \min_{\vx\in\cK_{k-1}}\|\vA\vx - \vb\|.
	\end{align*}
	Now using Lemma \ref{lem:ECO_Hh0_KkBounds}, 
	we conclude \eqref{eq:ECO_Hh0_obj-err-lam=0}  and \eqref{eq:ECO_Hh0_feas-err-lam=0} immediately.	
\end{proof}

\begin{remark}\label{rmk:ECO_Hh0_lb-complexity-lam=0}
A few remarks are in place for the above theorem. 
First, 
by \eqref{eq:ECO_Hh0_obj-err-lam=0} and \eqref{eq:ECO_Hh0_feas-err-lam=0} we have $|f(\vx^{(t)}) - f^*|\ge O(1/t)$ and $\|\vA\vx^{(t)} - \vb\|\ge O(1/t)$. From the complexity point of view, given $\vareps>0$, to compute an $\vareps$-optimal solution $\vx$, i.e., $|f(\vx) - f^*|\le \varepsilon$ and $\|\vA\vx - \vb\|\le \varepsilon$, the iteration number of a first-order method is at least in the order of $1/\varepsilon$. Therefore, on solving \eqref{eq:ECO}, $O(1/\varepsilon)$ is a lower complexity bound of any first-order algorithm that satisfies Assumption \ref{assum:linear_span}.
Second, 
if we consider further the dependence of the convergence result on the norm of $\vA$, by \eqref{eq:ECO_Hh0_obj-err-lam=0} and \eqref{eq:ECO_Hh0_feas-err-lam=0} we have that the lower complexity bound is $O(L_A/\varepsilon)$. Finally, consider the dependence on the Lipschitz constant $L_f$ of the objective gradient. From \eqref{eq:ECO_Hh0_obj-err-lam=0}, the lower complexity bound is $O(L_f/\varepsilon)$ to ensure $|f(\vx) - f^*|\le \varepsilon$. It is well known that the complexity result of optimal proximal gradient methods (c.f. \cite{FISTA2009, nesterov2013gradient}) can reach the order of $\sqrt{L_f/\vareps}$, which is smaller than $L_f/\varepsilon$ if $L_f>\vareps$. We point out that the bound in \eqref{eq:ECO_Hh0_obj-err-lam=0} does not contradict to the existing results because we do not allow projection onto the linear constraint set $\vA\vx=\vb$. In addition, we point out that we do not restrict the size of primal and dual solutions in the theorem. For the designed instance \eqref{eq:ECO_Hh0}, it holds that
\begin{align}
	\label{eq:LfLECO_Hh0_relation}
	L_f\|\vx^*\|^2\approx L_A\|\vx^*\|\cdot\|\vy^*\|.
\end{align} 
Hence, the two quantities in the right hand side of \eqref{eq:ECO_Hh0_obj-err-lam=0} are of the same order, so if we change the first quantity to $\frac{3L_f\|\vx^*\|^2}{16(t+1)^2}$, we still have a valid lower bound. 

However, note that the relation in \eqref{eq:LfLECO_Hh0_relation} requires $L_A>0$, and thus our lower complexity bound results in \eqref{eq:ECO_Hh0_obj-feas-err-lam=0} do not include those for a proximal gradient method as special cases. 
In the next subsection, we drop the assumption $L_A>0$ and design another ``hard'' instance that does not have the relation \eqref{eq:LfLECO_Hh0_relation}. That instance allows us to depict a clearer picture on the dependence of the convergence results on $L_A$ and $L_f$, and the dependence coincides with the existing upper complexity bound; see \eqref{eq:t2-ouyang} in section \ref{sec:tightness}.
\end{remark}

\subsection{A lower complexity bound with nonnegative $L_A$}
\label{sec:LfLAbound}

In this subsection, we design a ``hard'' instance that is different from \eqref{eq:ECO_Hh0}. By this instance, we establish a lower complexity bound that linearly depends on $\sqrt{L_f}$ and $L_A$. This bound is particularly useful as $L_f$ is big and $L_A=O(\sqrt{L_f})$. However, to show the bound, we only need assume $L_A=O(L_f)$, which allows $L_A=0$, and thus our result can also cover the case for proximal gradient methods. 
The instance we construct is still in the form of \eqref{eq:ECO_H} with 
\begin{align}
\label{eq:HQh}
\vH = \frac{L_f}{4} \begin{bmatrix}
\vB^\top \vB &
\\
& \vI_{n-2k}
\end{bmatrix} \in\R^{n\times n}, \vh = \left(\frac{L_f}{4}+\frac{L_A}{4\sqrt{2}}\right) \ve_{2k,n}, \vA = \frac{L_A}{2}\vLam, \vb = \frac{L_A}{2}\vc,
\end{align}
where $L_f$ and $L_A$ are given nonnegative numbers, and $\vB$, $\vLam$ and $\vc$ are those given in \eqref{eq:B} and \eqref{eq:Lamc}. 
From \eqref{eq:Bnorm}, \eqref{eq:norm_Lam}, and the block diagonal structure of $\vH$ above, we have $\|\vH\| = (L_f/4)\|\vB\|^2 \le L_f$. Therefore, \eqref{eq:ECO_H} with data specified in \eqref{eq:HQh} provides an instance of \eqref{eq:ECO} whose objective gradient $\nabla f$ is $L_f$-Lipschitz continuous. 

Our derivation of the lower complexity bound follows exactly the same path as in the previous subsection.  Specifically, in the sequel we prove three lemmas that are similar to Lemmas \ref{lem:xt-sp-Hh0}, \ref{lem:ECO_Hh0_solutions}, and \ref{lem:ECO_Hh0_KkBounds}.
We show in Lemma \ref{lem:ECO_HQh_xt-sp} below that under Assumption \ref{assum:linear_span}, the iterates generated by any first-order method on solving the designed instance would satisfy $\vx^{(t)}\in\cK_{t-1}$ for any $1\le t\le k$. In Lemma \ref{lem:HQh}, we give a pair of optimal primal-dual solution and also the optimal objective value of the instance. Then, in Lemma \ref{lem:ECO_HQh_KkBounds}, we establish the lower complexity bound by estimating the values in \eqref{eq:measures}. 
\begin{lemma}
	\label{lem:ECO_HQh_xt-sp}
	Consider the instance of \eqref{eq:ECO_H} with data described in \eqref{eq:HQh}. Under Assumption \ref{assum:linear_span}, we have $\vx^{(t)}\in \cK_{t-1}$ for any $1\le t \le k$, where $\cK_{t-1}$ is defined in \eqref{eq:KJ}.
\end{lemma}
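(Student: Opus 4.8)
The plan is to deduce this lemma directly from Lemma \ref{lem:xt-sp}, whose hypotheses are that $\vh\in\cK_0$ and that $\vH\cK_{t-1}\subseteq\cK_t$ for every $1\le t\le k$. Once these two structural conditions are verified for the data in \eqref{eq:HQh}, the conclusion $\vx^{(t)}\in\cK_{t-1}$ follows at once. The first condition is immediate: by \eqref{eq:HQh} the vector $\vh$ is a scalar multiple of $\ve_{2k,n}$, and by \eqref{eq:krylov} we have $\cK_0=\Span\{\ve_{2k,n}\}$, so $\vh\in\cK_0$. Thus the entire task reduces to checking the inclusion $\vH\cK_{t-1}\subseteq\cK_t$.

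For this inclusion I would exploit the Krylov description rather than computing with $\vB^\top\vB$ entrywise. The key observation is that, although $\vH$ and $\vLam^\top\vLam=\mathrm{diag}(\vB^\top\vB,\vG^\top\vG)$ differ in their lower-right blocks ($\vI_{n-2k}$ versus $\vG^\top\vG$), they agree up to the factor $L_f/4$ on the upper-left $2k\times 2k$ block, while every vector in $\cK_{t-1}$ has zero entries in its last $n-2k$ coordinates by \eqref{eq:krylov}. Hence for any $\vx\in\cK_{t-1}$ one has $\vH\vx=\tfrac{L_f}{4}\vLam^\top\vLam\,\vx$, and it suffices to show $\vLam^\top\vLam\,\cK_{t-1}\subseteq\cK_t$. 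To get this, write $\vx=\vLam^\top\vz$ with $\vz\in\cJ_{t-1}$, which is possible because $\cK_{t-1}=\vLam^\top\cJ_{t-1}$ by \eqref{eq:KJ}; then $\vLam^\top\vLam\vx=\vLam^\top\big(\vLam\vLam^\top\vz\big)$, and since $\cJ_{t-1}=\Span\{\vc,(\vLam\vLam^\top)\vc,\ldots,(\vLam\vLam^\top)^{t-1}\vc\}$, applying $\vLam\vLam^\top$ sends $\vz$ into $\cJ_t$. The relation $\vLam^\top\cJ_t=\cK_t$ from \eqref{eq:AonKJ} then gives $\vLam^\top\vLam\vx\in\cK_t$, and therefore $\vH\cK_{t-1}\subseteq\cK_t$ for all $1\le t\le k$. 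These indices lie inside the admissible range $0\le i\le 2k-1$ of Lemmas \ref{lem:krylov-FR} and \ref{lem:krylov-JK}, so all invoked identities are valid.

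With both hypotheses of Lemma \ref{lem:xt-sp} verified, that lemma applies verbatim and delivers $\vx^{(t)}\in\cK_{t-1}$ for every $1\le t\le k$, which is the claim. I do not anticipate a serious obstacle: the only point requiring care is the block mismatch between $\vH$ and $\vLam^\top\vLam$, which is harmless precisely because $\cK_{t-1}$ is supported on the first $2k$ coordinates. A more computational alternative would verify $\vB^\top\vB\,\cR_{t-1}\subseteq\cR_t$ directly from \eqref{eq:Bprop}, but the Krylov argument above is cleaner and avoids dealing with $\vB^\top$ explicitly.
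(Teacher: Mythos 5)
Your proposal is correct, and it follows the paper's own high-level strategy: both arguments reduce the lemma to Lemma \ref{lem:xt-sp} by verifying its two hypotheses, and the check $\vh\in\cK_0$ (a multiple of $\ve_{2k,n}$, combined with \eqref{eq:krylov}) is identical. Where you genuinely diverge is in verifying $\vH\cK_{t-1}\subseteq\cK_t$. The paper does this computationally on the explicit basis: using the definition of $\vH$ and the second line of \eqref{eq:Bprop}, it checks directly that $\vH\Span\{\ve_{2k-t+1,n},\ldots,\ve_{2k,n}\}=\Span\{\ve_{2k-t,n},\ldots,\ve_{2k,n}\}$, which in passing yields the (unneeded) equality $\vH\cK_{t-1}=\cK_t$ rather than the mere inclusion. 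You instead observe that $\vH$ coincides with $\frac{L_f}{4}\vLam^\top\vLam$ on vectors supported in the first $2k$ coordinates --- which covers all of $\cK_{t-1}$ by \eqref{eq:krylov} --- and then push the inclusion through the Krylov structure itself: writing $\vx=\vLam^\top\vz$ with $\vz\in\cJ_{t-1}$ gives $\vLam^\top\vLam\vx=\vLam^\top\bigl(\vLam\vLam^\top\vz\bigr)\in\vLam^\top\cJ_t=\cK_t$. Both verifications are valid, and your index bookkeeping is in order: $t\le k\le 2k-1$, so \eqref{eq:krylov} applies wherever you invoke it, and the block mismatch between $\vI_{n-2k}$ and $\vG^\top\vG$ is indeed harmless for the stated reason. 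What your route buys is that it uses nothing about $\vB$ beyond the block-diagonal structure of $\vLam$, so it would transfer verbatim to any $\vH$ acting on the first $2k$ coordinates as a degree-one polynomial in $\vB^\top\vB$; the paper's computation is tied to the specific bidiagonal action \eqref{eq:Bprop} but is self-contained at the level of standard basis vectors and delivers the sharper equality.
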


\begin{proof}
	To prove the lemma, it suffices to verify that $\vh\in\cK_{0}$ and $\vH\cK_{t-1}\subseteq\cK_t$ for any $1\le t\le k$ and then apply Lemma \ref{lem:xt-sp}. Since $\vh$ is a multiple of $\ve_{2k,n}$, from \eqref{eq:krylov} we immediately have $\vh\in\cK_0$. Using the definition of $\vH$ and the second line of equation in \eqref{eq:Bprop}, one can easily verify that $\vH\Span\{\ve_{2k-t+1,n},\ldots,\ve_{2k,n}\} = \Span\{\ve_{2k-t,n},\ve_{2k-t+1,n},\ldots,\ve_{2k,n}\}$ for any $1\le t\le k$. Hence we have all the conditions required by Lemma \ref{lem:xt-sp}, and thus $\vx^{(t)}\in \cK_{t-1}$, which completes the proof.
\end{proof}

The next lemma gives the primal-dual solution and optimal objective value of the considered instance. 

\begin{lemma}
	\label{lem:HQh}
Let $L_f>0$ and $L_A\ge0$.   For the instance of \eqref{eq:ECO_H} with data given in \eqref{eq:HQh}, it has a unique optimal solution $\vx^*$ given in \eqref{eq:sol-lam=0}, and there is a corresponding dual solution $\vy^*$ given by \begin{align}
	\label{eq:ECO_HQh_sol-lam=0-y}
	y_i^*=\begin{cases}
	-\frac{1}{2\sqrt{2}} & \text{ if }1\le i\le 2k
	\\
	0 & \text{ if }i\ge 2k+1.
	\end{cases}
	\end{align} In addition, the optimal objective value is
	\begin{align}
	\label{eq:ECO_HQh_fstar}
	f^* = -\left(\frac{L_f}{4} + \frac{L_A}{2\sqrt{2}}\right)k,
	\end{align}
%
	and the norm of the dual solution is 
	\begin{align}
	\label{eq:HQh_nrm-y-lam=0}\|\vy^*\| = \frac{\sqrt k}{2}.
	\end{align}
\end{lemma}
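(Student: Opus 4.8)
The plan is to verify that the proposed pair $(\vx^*,\vy^*)$ satisfies the Karush--Kuhn--Tucker conditions of the convex quadratic program and to extract uniqueness of $\vx^*$ from strict convexity of the objective. As in the proof of Lemma \ref{lem:ECO_Hh0_solutions}, I would split $\vx=(\vu^\top,\vv^\top)^\top$ with $\vu\in\R^{2k}$ and $\vv\in\R^{n-2k}$ and exploit the block-diagonal structure of $\vH$ and $\vA$. Writing the objective as $f(\vx)=\frac{L_f}{8}\|\vB\vu\|^2+\frac{L_f}{8}\|\vv\|^2-\big(\frac{L_f}{4}+\frac{L_A}{4\sqrt 2}\big)u_{2k}$, I note that since $\vB$ is nonsingular (see \eqref{eq:invB}) the block $\vB^\top\vB$ is positive definite, hence $\vH\succ 0$ and $f$ is strictly convex. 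Thus the problem has at most one minimizer, and since the constraints are affine, the KKT conditions are sufficient for optimality; it therefore suffices to exhibit one KKT pair.

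The single algebraic fact driving everything is $\vB^\top\vone_{2k}=\ve_{2k,2k}$, which I would read off from \eqref{eq:Bprop} (equivalently, every column sum of $\vB$ vanishes except the last, which equals $1$). Using it together with $\vB\vu^*=\vone_{2k}$ from \eqref{eq:ustar} (where $\vx^*$ is given by \eqref{eq:sol-lam=0}), I would first confirm primal feasibility $\vA\vx^*=\frac{L_A}{2}\vLam\vx^*=\frac{L_A}{2}\vc=\vb$. For stationarity I would compute $\vH\vx^*$, whose first block is $\frac{L_f}{4}\vB^\top\vB\vu^*=\frac{L_f}{4}\vB^\top\vone_{2k}=\frac{L_f}{4}\ve_{2k,2k}$ and whose second block vanishes, so $\vH\vx^*=\frac{L_f}{4}\ve_{2k,n}$ and hence $\nabla f(\vx^*)=\vH\vx^*-\vh=-\frac{L_A}{4\sqrt 2}\ve_{2k,n}$. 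Writing $\vy^*=(\vlam^{*\top},\vzero^\top)^\top$ with $\vlam^*=-\frac{1}{2\sqrt2}\vone_{2k}$ from \eqref{eq:ECO_HQh_sol-lam=0-y}, the first block of $\vLam^\top\vy^*$ is $\vB^\top\vlam^*=-\frac{1}{2\sqrt2}\ve_{2k,2k}$ and the second vanishes, so $\vA^\top\vy^*=\frac{L_A}{2}\vLam^\top\vy^*=-\frac{L_A}{4\sqrt2}\ve_{2k,n}$. These agree, so $\nabla f(\vx^*)=\vA^\top\vy^*$; the KKT conditions hold, identifying $\vx^*$ as the unique optimal solution with associated multiplier $\vy^*$.

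It then remains to evaluate the two scalar quantities directly. The optimal value is $f^*=f(\vx^*)=\frac{L_f}{8}\|\vone_{2k}\|^2-\big(\frac{L_f}{4}+\frac{L_A}{4\sqrt2}\big)u^*_{2k}=\frac{L_fk}{4}-\big(\frac{L_f}{4}+\frac{L_A}{4\sqrt2}\big)2k$, which simplifies to the expression in \eqref{eq:ECO_HQh_fstar}; and $\|\vy^*\|^2=2k\cdot\frac{1}{8}=\frac{k}{4}$, giving \eqref{eq:HQh_nrm-y-lam=0}.

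The one subtlety to watch is the degenerate case $L_A=0$ permitted by the hypothesis, where $\vA=\vzero$ and the linear constraint disappears: there the feasible set is all of $\R^n$ and stationarity reduces to $\nabla f(\vx^*)=\vzero$, which is exactly what the computation above produces once $L_A=0$ annihilates the $-\frac{L_A}{4\sqrt2}\ve_{2k,n}$ term, so the unified KKT argument still applies. I expect the verification to be almost entirely routine; the main point to get right is the identity $\vB^\top\vone_{2k}=\ve_{2k,2k}$, from which both the feasibility and stationarity checks fall out, and the bookkeeping that keeps $L_A\ge 0$ (rather than $L_A>0$) valid throughout.
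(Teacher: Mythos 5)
Your proof is correct, and it follows the same skeleton as the paper's: the same block split $\vx=(\vu^\top,\vv^\top)^\top$ exploiting the block-diagonal structure of $\vH$ and $\vA$, the same multiplier convention $\nabla f(\vx^*)=\vA^\top\vy^*$, and the same pivotal identity $\vB^\top\vone_{2k}=\ve_{2k,2k}$ (the paper uses it in the equivalent form $\frac{L_f}{4}\vB^\top\vB\vu^*-\vs=\frac{L_f}{4}\vB^\top\vone_{2k}-\vs=-\frac{L_A}{4\sqrt 2}\ve_{2k,2k}$). Where you genuinely diverge is in two places, both to your advantage. First, uniqueness: the paper argues by cases --- for $L_A>0$ the $\vu$-subproblem has a singleton feasible set by \eqref{eq:ustar}, while for $L_A=0$ it is unconstrained and the optimality system $\vS\vu^*=\vs$ has a unique solution since $\vB$ is nonsingular --- whereas you observe once and for all that $\vH$ is positive definite (its blocks are $\frac{L_f}{4}\vB^\top\vB\succ 0$ and $\frac{L_f}{4}\vI_{n-2k}$), so $f$ is strictly convex and the minimizer is unique uniformly in $L_A\ge 0$; this eliminates the case split entirely and correctly handles the degenerate constraint $\vzero=\vzero$ when $L_A=0$. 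Second, you verify the stated multiplier $\vlam^*=-\frac{1}{2\sqrt 2}\vone_{2k}$ by direct substitution into the stationarity equation, whereas the paper derives it by inverting $\vB^\top$ via \eqref{eq:invB} (again with a small case split at $L_A=0$); since $\vB^\top$ is nonsingular the two are equivalent, and guess-and-verify suffices because the lemma only asserts existence of \emph{a} corresponding dual solution (cf.\ Remark \ref{rmk:HQh}, which notes dual uniqueness only for $L_A>0$). Your scalar evaluations also check out: $f^*=\frac{L_f}{8}\|\vone_{2k}\|^2-\bigl(\frac{L_f}{4}+\frac{L_A}{4\sqrt 2}\bigr)2k=-\bigl(\frac{L_f}{4}+\frac{L_A}{2\sqrt 2}\bigr)k$ and $\|\vy^*\|^2=2k\cdot\frac{1}{8}=\frac{k}{4}$, matching \eqref{eq:ECO_HQh_fstar} and \eqref{eq:HQh_nrm-y-lam=0}.
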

\begin{proof}
Similar to the proof of Lemma \ref{lem:ECO_Hh0_solutions}, we split $\vx$ into two parts as $\vx=(\vu^\top, \vv^\top)^\top$ with $\vu\in\RR^{2k}$ and $\vv\in\RR^{n-2k}$. Then from the block structure of $\vH$ and $\vA$ in \eqref{eq:HQh}, we obtain the following two optimization problems with respect to $\vu$ and $\vv$:
	\begin{align}\label{eq:ECO_H_1st-subprob}
	&\min_\vu \frac{1}{2}\vu^\top \vS \vu - \vs^\top\vu, \st \frac{L_A}{2}\vB\vu=\frac{L_A}{2}\vone_{2k},\\
	&\min_\vv \frac{L_f}{8}\|\vv\|^2, \st \frac{L_A}{2}\vG\vv=\vzero,\label{eq:ECO_H_1st-subprob-v}
	\end{align} 
	where 
	\begin{align*}
	\vS = \frac{L_f}{4} \vB^\top \vB\text{ and }\vs = \left(\frac{L_f}{4}+\frac{L_A}{4\sqrt{2}}\right) \ve_{2k,2k}. 
	\end{align*}

Since $L_f>0$, \eqref{eq:ECO_H_1st-subprob-v} clearly has a unique solution $\vv^*=\vzero$. If $L_A=0$, \eqref{eq:ECO_H_1st-subprob} is unconstrained, and $\vu^*$ is an optimal solution if and only if the optimality condition $\vS\vu^*=\vs$ holds. Note $L_f>0$ and $\vB$ is nonsingular. Then it is easy to verify that $\vu^*=(1,2,\ldots,2k)^\top$ is the unique point that satisfies the optimality condition and thus is an optimal solution to \eqref{eq:ECO_H_1st-subprob}. If $L_A>0$, then by \eqref{eq:ustar}, $\vu^*=(1,2,\ldots,2k)^\top$ is the unique feasible and thus optimal solution of \eqref{eq:ECO_H_1st-subprob}. Hence in both cases, we conclude $\vu^*$ is unique, and thus $\vx^*$ is unique and given in \eqref{eq:sol-lam=0}. Consequently,
	\begin{align*}
	f^* = \frac{1}{2}(\vu^*)^\top\vS\vu^* - \vs^\top \vu^* = \frac{L_f}{8}\|\vB\vu^*\|^2 - \vs^\top \vu^* = \frac{L_f}{8}\|\vone_{2k}\|^2 - \vs^\top \vu^* = -\frac{L_f}{4}k - \frac{L_A}{2\sqrt{2}}k.
	\end{align*}
	
To derive the corresponding dual variable,	 we split $\vy=(\vlam^\top,\vpi^\top)^\top$ 
	with $\vlam\in\RR^{2k}$ and $\vpi\in\RR^{m-2k}$. It follows from the KKT optimality conditions of \eqref{eq:ECO_H_1st-subprob} that
	\begin{equation}\label{eq:ECO_H_kkt-lam=0-reduced}
	\frac{L_A}{2}\vB^\top \vlam^* = \vS \vu^* - \vs, \quad \frac{L_A}{2}\vG^\top\vpi^* = \frac{L_f}{4}\vv^*=\vzero.
	\end{equation}
Obviously, $\vpi^*=\vzero$ is a solution of the above second equation. 
	In addition, from the definition of $\vB$ in \eqref{eq:B} and $\vu^*=(1,2,\ldots,2k)^\top$,  it is straightforward to verify
	\begin{align*}
	\vS\vu^* - \vs 
	= \frac{L_f}{4}\vB^\top\vB\vu^* - \vs
	= \frac{L_f}{4}\vB^\top\vone_{2k} - \vs
	= \frac{L_f}{4}\ve_{2k,2k} - \vs = -\frac{L_A}{4\sqrt{2}} \ve_{2k,2k}.
	\end{align*}
If $L_A=0$, then $\vlam^*=-\frac{1}{2\sqrt{2}}\vone_{2k}$ obviously satisfies the first equation in \eqref{eq:ECO_H_kkt-lam=0-reduced}. If $L_A>0$, we use \eqref{eq:invB}, the above result and \eqref{eq:ECO_H_kkt-lam=0-reduced} to have
	\begin{align*}
	\vlam^* = \frac{2}{L_A}\left(\vB^\top\right)^{-1}(-\frac{L_A}{4\sqrt{2}}\ve_{2k,2k}) = -\frac{1}{2\sqrt{2}}\vone_{2k},
	\end{align*}
Therefore, \eqref{eq:ECO_HQh_sol-lam=0-y} follows immediately, and it is straightforward to have $\|\vy^*\|=\frac{\sqrt k}{2}$.	
\end{proof}

\begin{remark}\label{rmk:HQh}
From the above proof, we see that if $L_A>0$, then the dual solution $\vy^*$ is also unique and given in \eqref{eq:ECO_HQh_sol-lam=0-y} since $\vG$ is full row rank.
\end{remark}

Using Lemma \ref{lem:HQh}, we have the following estimate.
\begin{lemma}
	\label{lem:ECO_HQh_KkBounds}
Let $L_f>0$ and $L_A\ge0$. Assume $L_f\ge L_A$. Then for the instance of \eqref{eq:ECO_H} with data given in \eqref{eq:HQh}, we have
	\begin{subequations}
		\begin{align}
		\label{eq:ECO_HQh_KkBounds_obj}
		\min_{\vx\in\cK_{k-1}} f(\vx) - f^*\ge & ~ \frac{3L_f\|\vx^*\|^2}{128(k+1)^2} + \frac{\sqrt{3}L_A\|\vx^*\|\cdot\|\vy^*\|}{8(k+1)}, 
		\\[0.1cm]
		\label{eq:ECO_HQh_KkBounds_feas}
		\min_{\vx\in\cK_{k-1}}\|\vA\vx - \vb\| \ge &~  \frac{\sqrt{3}L_A\|\vx^*\|}{4\sqrt{2}(k+1)},
		\end{align}
	\end{subequations}
	where $\vx^*$ is the unique primal solution given in \eqref{eq:sol-lam=0}, $\vy^*$ is a corresponding dual solution given in \eqref{eq:ECO_HQh_sol-lam=0-y}, 	and $\cK_{k-1}$ is defined in \eqref{eq:KJ}.
\end{lemma}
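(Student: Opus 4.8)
The plan is to mirror the structure of the proof of Lemma \ref{lem:ECO_Hh0_KkBounds}, since Lemma \ref{lem:ECO_HQh_xt-sp} already guarantees $\vx^{(t)}\in\cK_{t-1}$ and Lemma \ref{lem:HQh} supplies the exact primal solution, dual solution, optimal value, and the norms $\|\vx^*\|$ and $\|\vy^*\|$. The feasibility bound \eqref{eq:ECO_HQh_KkBounds_feas} is \emph{identical} to \eqref{eq:ECO_Hh0_KkBounds_feas}, because it depends only on $\vA,\vb$ and the containment $\cK_{k-1}$, which are the same as before. So I would first reuse verbatim the argument from \eqref{eq:ineq-Ax-b-lem-Hh0}: for $\vx\in\cK_{k-1}$, the vector $\vA\vx$ has nonzeros only in its first $k$ coordinates (by \eqref{eq:LamK}), while $\vb$ has its first $2k$ entries equal to $\frac{L_A}{2}$, so $\|\vA\vx-\vb\|^2\ge \frac{kL_A^2}{4}$, and then invoke $\|\vx^*\|^2=\frac{k(2k+1)(4k+1)}{3}$ together with $(2k+1)(4k+1)\le 8(k+1)^2$ to convert $\frac{kL_A^2}{4}$ into the stated bound.

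For the objective bound \eqref{eq:ECO_HQh_KkBounds_obj}, the key new feature is that $f^*<0$ and $f$ is no longer zero on $\cK_{k-1}$, so I must actually \emph{minimize} $f$ over $\cK_{k-1}$ rather than just evaluate it. I would parametrize $\vx\in\cK_{k-1}=\Span\{\ve_{k+1,n},\ldots,\ve_{2k,n}\}$ (from \eqref{eq:krylov} with $i=k-1$), so only coordinates $x_{k+1},\ldots,x_{2k}$ are free and all others vanish. Plugging this into $f(\vx)=\frac12\vx^\top\vH\vx-\vh^\top\vx$ with $\vH,\vh$ from \eqref{eq:HQh}, the quadratic part $\frac{L_f}{8}\|\vB\vx\|^2$ and the linear part supported on $\ve_{2k,n}$ restrict to a small quadratic in the $k$ free variables. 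The structure of $\vB$ in \eqref{eq:B} makes $\|\vB\vx\|^2$ a sum of squared consecutive differences on these coordinates, so the restricted minimization is a discrete one-dimensional variational problem. I would solve it (or bound it from below) to get $\min_{\vx\in\cK_{k-1}}f(\vx)\ge -(\text{something})$, and then add $-f^*=\left(\frac{L_f}{4}+\frac{L_A}{2\sqrt2}\right)k$ to obtain a clean lower bound on $\min f - f^*$ that separates into an $L_f$-term and an $L_A$-term.

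The main obstacle is exactly this restricted minimization of the quadratic over $\cK_{k-1}$: unlike the previous instance, $f$ does not vanish there, so I need a sharp lower bound on $\min_{\vx\in\cK_{k-1}}\frac{L_f}{8}\|\vB\vx\|^2-\left(\frac{L_f}{4}+\frac{L_A}{4\sqrt2}\right)x_{2k}$. The natural approach is to compute the unconstrained minimizer of this reduced quadratic in closed form (it amounts to inverting the tridiagonal-type Gram matrix coming from $\vB^\top\vB$ on the last $k$ coordinates) and evaluate the minimum, then compare it with $f^*$ to see how much the restriction to $\cK_{k-1}$ costs in objective value. I expect the gap $\min_{\vx\in\cK_{k-1}}f(\vx)-f^*$ to scale like $\frac{L_f k}{(k+1)^2}\cdot(\text{const})$ from the quadratic term and like $\frac{L_A k}{(k+1)}\cdot(\text{const})$ from the linear term. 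The final step is bookkeeping: substitute $\|\vx^*\|^2=\frac{k(2k+1)(4k+1)}{3}$ and $\|\vy^*\|=\frac{\sqrt k}{2}$ (so that $L_A\|\vx^*\|\,\|\vy^*\|\approx \frac{L_A k^2}{\sqrt3}$) to rewrite both terms in the normalized form appearing on the right-hand side of \eqref{eq:ECO_HQh_KkBounds_obj}, using $k\ge1$ to absorb lower-order factors into the stated constants $\frac{3}{128}$ and $\frac{\sqrt3}{8}$, where the assumption $L_f\ge L_A$ is presumably what lets the quadratic $L_f$-term be bounded below despite the cross terms.
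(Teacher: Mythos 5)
Your proposal matches the paper's proof essentially step for step: the feasibility bound is reused verbatim from \eqref{eq:ineq-Ax-b-lem-Hh0}, and the objective bound is obtained by restricting $f$ to $\cK_{k-1}=\Span\{\ve_{k+1,n},\ldots,\ve_{2k,n}\}$, solving the reduced quadratic $\min_{\vz}\frac{L_f}{8}\|\bar\vB\vz\|^2-\bigl(\frac{L_f}{4}+\frac{L_A}{4\sqrt{2}}\bigr)z_k$ exactly via its optimality condition (giving $\min_{\vx\in\cK_{k-1}}f(\vx)=-\frac{1}{8}\bigl(L_f+\sqrt{2}L_A+\frac{L_A^2}{2L_f}\bigr)k$), and then using $L_f\ge L_A$ precisely as you guessed, namely to absorb the cross term via $\frac{L_A^2}{2L_f}\le\frac{L_f}{2}$ before the final substitution of $\|\vx^*\|^2$ and $\|\vy^*\|$. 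The only quibble is a harmless heuristic slip: the raw gap scales like $L_fk+L_Ak$, not $\frac{L_fk}{(k+1)^2}+\frac{L_Ak}{k+1}$; the $(k+1)$-factors appear only after the normalization step you already describe.
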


\begin{proof}
The result in \eqref{eq:ECO_HQh_KkBounds_feas} holds exactly the same as that in \eqref{eq:ECO_Hh0_KkBounds_feas}.

	To prove \eqref{eq:ECO_HQh_KkBounds_obj} we need to compute the minimal objective value of $f(\vx)$ over $\cK_{k-1}$. By \eqref{eq:krylov} we have $\cK_{k-1} = \Span\{\ve_{k+1,n},\ldots,\ve_{2k,n}\}$. Hence, for any $\vx\in\cK_{k-1}$, we can write it as $\vx=(\vzero_k^\top, \vz^\top, \vzero_{n-2k}^\top)^\top$ where $\vz\in \R^k$. Recalling \eqref{eq:HQh}, we have
	\begin{align*}
	\vh^\top\vx = & \left(\frac{L_f}{4} + \frac{L_A}{4\sqrt{2}}\right)x_{2k} = \left(\frac{L_f}{4} + \frac{L_A}{4\sqrt{2}}\right)z_k,
	\\
	\vx^\top\vH\vx  & = \frac{L_f}{4}\left\|\vB\begin{pmatrix}
	\vzero_k\\ \vz
	\end{pmatrix}\right\|^2 = \frac{L_f}{4}\|\bar\vB\vz\|^2 ,
	\end{align*}
	where
	\begin{align*}
	\bar\vB :=\left[\begin{array}{rrrrr}
	&  &  &-1 & \ 1\\
	& & \iddots & \iddots &\\
	& -1 & 1 &  & \\ 
	-1 & 1 &  & & \\
	1 &  &  & & 
	\end{array}\right]\in\RR^{k\times k}
	\end{align*}
	is a $k\times k$ submatrix of $\vB$. Therefore, 
	\begin{align}\label{eq:equiv-un-opt}
	\min_{\vx\in \cK_{k-1}}f(\vx) = \min_{\vz\in\R^k}\frac{L_f}{8}\|\bar\vB \vz\|^2 - \left(\frac{L_f}{4} + \frac{L_A}{4\sqrt{2}}\right)z_k.
	\end{align}
	Let $\vz^*$ be the optimal solution to the right hand side minimization problem in \eqref{eq:equiv-un-opt}. Then it must satisfy the optimality condition: 
	\begin{align*}
	\frac{L_f}{4}\bar\vB^2\vz^* =  \left(\frac{L_f}{4} + \frac{L_A}{4\sqrt{2}}\right)\ve_{k,k},
	\end{align*}
	which has the unique solution
	\begin{align*}
	\vz^* = \frac{4}{L_f}\left(\frac{L_f}{4} + \frac{L_A}{4\sqrt{2}}\right)(1,\ldots,k)^\top.
	\end{align*}
	Plugging $\vz=\vz^*$ into the right hand side of \eqref{eq:equiv-un-opt} yields 
	\begin{align}
	\label{eq:ECO_HQh_fmin_K}
	\min_{\vx\in \cK_{k-1}}f(\vx) 
	= -\frac{1}{8}\left(L_f + \sqrt{2}L_A + \frac{L_A^2}{2L_f}\right)k.
	\end{align}	
	From the above result and \eqref{eq:ECO_HQh_fstar}, we have
	\begin{align}\label{eq:bd-min-f-diff}
	\min_{\vx\in\cK_{k-1}}f(\vx) - f^* = \frac{1}{8}\left(L_f + \sqrt{2} L_A - \frac{L_A^2}{2L_f}\right)k \ge \frac{1}{8}\left(\frac{L_f}{2} + \sqrt{2} L_A\right)k.
	\end{align}
	where the inequality follows from $L_f\ge L_A$.
	Moreover, using \eqref{eq:ECO_Hh0_nrm-x-lam=0} and \eqref{eq:HQh_nrm-y-lam=0}, we have
	\begin{align*}
	\frac{L_f}{16}k + \frac{\sqrt{2}L_A}{8}k = & \frac{3L_f\|\vx^*\|^2}{16(2k+1)(4k+1)} + \frac{\sqrt{6}L_A\|\vx^*\|\cdot\|\vy^*\|}{4\sqrt{(2k+1)(4k+1)}}
	\\
	\ge & \frac{3L_f\|\vx^*\|^2}{128(k+1)^2} + \frac{\sqrt{3}L_A\|\vx^*\|\cdot\|\vy^*\|}{8(k+1)},
	\end{align*}
which together with \eqref{eq:bd-min-f-diff} gives \eqref{eq:ECO_HQh_KkBounds_obj} and completes the proof.
\end{proof}

Using Lemmas \ref{lem:ECO_HQh_xt-sp} through \ref{lem:ECO_HQh_KkBounds}, we are ready to establish the following lower complexity bound results.

\begin{theorem}[lower complexity bound II under linear span assumption]
	\label{thm:ECO_HQh_lb-complexity-lam=0}
Let $m\le n$ be positive integers, $L_f>0$, and $L_A\ge0$. Assume $L_f\ge L_A$. 	
For any positive integer $t< \frac{m}{2}$, there exists an instance of \eqref{eq:ECO} such that $\nabla f$ is $L_f$-Lipschitz continuous, $\|\vA\|=L_A$, and it has a primal-dual solution $(\vx^*,\vy^*)$. In addition, for any algorithm on solving \eqref{eq:ECO}, if it satisfies Assumption \ref{assum:linear_span}, then we have
	\begin{subequations}\label{eq:obj-feas-err-lam=0}
		\begin{align}
		f( \vx^{(t)} )-f( \vx ^*)\ge & ~ \frac{3L_f\|\vx^*\|^2}{128(t+1)^2} + \frac{\sqrt{3}L_A\|\vx^*\|\cdot\|\vy^*\|}{8(t+1)}, \label{eq:ECO_HQh_obj-err-lam=0} \\[0.1cm]
		\| \vA  \vx^{(t)} - \vb \| \ge &~  \frac{\sqrt{3}L_A\|\vx^*\|}{4\sqrt{2}(t+1)}.\label{eq:ECO_HQh_feas-err-lam=0}
		\end{align}
	\end{subequations}
\end{theorem}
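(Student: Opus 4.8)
The plan is to directly reduce Theorem~\ref{thm:ECO_HQh_lb-complexity-lam=0} to the machinery already assembled in this subsection, exactly as Theorem~\ref{thm:ECO_Hh0_lb-complexity-lam=0} was reduced to Lemmas~\ref{lem:xt-sp-Hh0}--\ref{lem:ECO_Hh0_KkBounds}. First I would set $k=t$, which is legitimate since the hypothesis $t<\frac{m}{2}$ matches the standing assumption $k<\frac{m}{2}$ used throughout. Then I would point to the specific instance of \eqref{eq:ECO_H} with data given in \eqref{eq:HQh}: its objective gradient is $L_f$-Lipschitz continuous because $\|\vH\|=(L_f/4)\|\vB\|^2\le L_f$ by \eqref{eq:Bnorm}, and $\|\vA\|=\frac{L_A}{2}\|\vLam\|=L_A$ by \eqref{eq:norm_Lam}. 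The existence of a primal-dual solution $(\vx^*,\vy^*)$ is furnished by Lemma~\ref{lem:HQh}, which also records the explicit solution.

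Next I would invoke Lemma~\ref{lem:ECO_HQh_xt-sp} to conclude that any first-order method satisfying Assumption~\ref{assum:linear_span} produces iterates obeying $\vx^{(t)}\in\cK_{t-1}=\cK_{k-1}$ (using $t=k$). This membership is the crucial bridge: it converts the per-iterate quantities $f(\vx^{(t)})-f^*$ and $\|\vA\vx^{(t)}-\vb\|$ into infima over the subspace $\cK_{k-1}$, via the elementary bounds
\begin{align*}
f(\vx^{(t)})-f^*\ge\min_{\vx\in\cK_{k-1}}f(\vx)-f^*,\qquad
\|\vA\vx^{(t)}-\vb\|\ge\min_{\vx\in\cK_{k-1}}\|\vA\vx-\vb\|.
\end{align*}
I would then apply Lemma~\ref{lem:ECO_HQh_KkBounds}, whose hypotheses $L_f>0$, $L_A\ge0$, and $L_f\ge L_A$ are precisely the standing assumptions of the theorem, to lower-bound these two infima by the claimed expressions, yielding \eqref{eq:ECO_HQh_obj-err-lam=0} and \eqref{eq:ECO_HQh_feas-err-lam=0} after substituting $k=t$.

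Since the structural work has been front-loaded into the three preceding lemmas, the proof is essentially a bookkeeping assembly and I anticipate no genuine obstacle. The one point demanding care is that Lemma~\ref{lem:ECO_HQh_KkBounds} bounds $\min_{\vx\in\cK_{k-1}}f(\vx)-f^*$ rather than $\min_{\vx\in\cK_{k-1}}|f(\vx)-f^*|$; I would note that no absolute value is needed here, in contrast to Theorem~\ref{thm:ECO_Hh0_lb-complexity-lam=0}, because on this instance every point of $\cK_{k-1}$ already exceeds the optimal value (indeed \eqref{eq:ECO_HQh_fmin_K} shows $\min_{\vx\in\cK_{k-1}}f(\vx)>f^*$), so the signed difference is automatically nonnegative and the lower bound is genuine. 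Beyond that, one need only verify that the instance qualifies as a valid member of the class \eqref{eq:ECO} and carry the constants through unchanged, completing the argument.
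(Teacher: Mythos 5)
Your proposal is correct and follows essentially the same route as the paper's own proof: set $k=t$, take the instance \eqref{eq:ECO_H} with data \eqref{eq:HQh}, use Lemma~\ref{lem:HQh} for the primal-dual solution, Lemma~\ref{lem:ECO_HQh_xt-sp} for $\vx^{(t)}\in\cK_{k-1}$, and Lemma~\ref{lem:ECO_HQh_KkBounds} for the two lower bounds. Your extra remark about why no absolute value is needed (the signed gap $\min_{\vx\in\cK_{k-1}}f(\vx)-f^*$ is already positive under $L_f\ge L_A$) is accurate and consistent with Remark~\ref{rmk:ECO_HQh_lb-complexity-lam=0}.
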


\begin{proof} 
Set $k=t<\frac m 2$ and consider the instance \eqref{eq:ECO_H} with data given in \eqref{eq:HQh}. Clearly, this instance is in the form of \eqref{eq:ECO}, $\nabla f$ is $L_f$-Lipschitz continuous, and $\|\vA\|=L_A$. In addition, Lemma \ref{lem:HQh} indicates that it has a primal-dual solution $(\vx^*,\vy^*)$. By Lemma \ref{lem:ECO_HQh_xt-sp} and noting  $t=k$, we have $\vx^{(t)}\in \cK_{k-1}$. Consequently, 
	\begin{align*}
	f( \vx^{(t)} )-f( \vx ^*) \ge \min_{\vx\in\cK_{k-1}}f(\vx) - f^*,\text{ and }\| \vA  \vx^{(t)} - \vb \|\ge \min_{\vx\in\cK_{k-1}}\|\vA\vx - \vb\|.
	\end{align*}
	Now we conclude \eqref{eq:ECO_HQh_obj-err-lam=0}  and \eqref{eq:ECO_HQh_feas-err-lam=0} from Lemma \ref{lem:ECO_HQh_KkBounds}. 
	\end{proof}

\begin{remark}\label{rmk:ECO_HQh_lb-complexity-lam=0}
Let us compare the results in Theorems \ref{thm:ECO_Hh0_lb-complexity-lam=0} and \ref{thm:ECO_HQh_lb-complexity-lam=0}. First note that the former theorem requires $L_A>0$ while the latter one only needs $L_A\ge0$. As $L_A=0$, Theorem \ref{thm:ECO_HQh_lb-complexity-lam=0} recovers the lower complexity bound $O(\frac{L_f\|\vx^*\|^2}{(t+1)^2})$ for proximal gradient methods in \cite{nesterov2004introductory}. Second, the lower bounds for feasibility error are the same in the two theorems. For the objective error, both \eqref{eq:ECO_Hh0_obj-err-lam=0}
and \eqref{eq:ECO_HQh_obj-err-lam=0} have the term $\frac{L_A\|\vx^*\|\cdot\|\vy^*\|}{t+1}$. If this term dominates the other one in both inequalities, then the lower bounds in Theorems \ref{thm:ECO_Hh0_lb-complexity-lam=0} and \ref{thm:ECO_HQh_lb-complexity-lam=0} are in the same order, and their difference is that the former has an absolute value on the objective error while the latter one does not.
Thirdly, $O(\frac{L_f\|\vx^*\|^2}{(t+1)^2}+\frac{L_A\|\vx^*\|\cdot\|\vy^*\|}{t+1})$ has also appeared as an upper complexity bound for certain first-order methods; see \cite{ouyang2015accelerated, xu2017iter-complexity-ialm} and the inequality \eqref{eq:t2-ouyang} in section \ref{sec:tightness}. If $\|\vx^*\|$ and $\|\vy^*\|$ are regarded as constants, the term $\frac{L_f\|\vx^*\|^2}{(t+1)^2}$ can dominate $\frac{L_A\|\vx^*\|\cdot\|\vy^*\|}{t+1}$ for very big $t$ only when $L_f\gg L_A$. In fact, we observe that when $L_f\le L_A$, the result in \eqref{eq:ECO_HQh_obj-err-lam=0} reduces to $O(L_A/t)$. This observation is consistent with the result \eqref{eq:ECO_Hh0_obj-err-lam=0} in Theorem \ref{thm:ECO_Hh0_lb-complexity-lam=0}. 
Finally, it is interesting to note that in \eqref{eq:ECO_HQh_obj-err-lam=0}, $f( \vx^{(t)} )$ is always greater than $f^*$, and it is not necessarily the case in \eqref{eq:ECO_Hh0_obj-err-lam=0}, which uses the absolute value. \end{remark}

\subsection{A lower complexity bound for strongly convex case}
In this subsection, we develop a lower complexity bound for solving \eqref{eq:ECO} where $f$ is $\mu$-strongly convex, namely,
$$\langle \nabla f(\vx_1)-\nabla f(\vx_2), \vx_1-\vx_2\rangle \ge \mu\|\vx_1-\vx_2\|^2,\, \forall\, \vx_1,\vx_2\in\RR^n.$$ The measure we use is different from those in \eqref{eq:measures}. Instead of bounding the objective and feasibility error, we directly bound the distance of generated iterate to the unique optimal solution. Similar to the previous two subsections, the ``hard'' instance we design is also a quadratic program in the form of \eqref{eq:ECO_H}. The following theorem summarizes our result. 

\begin{theorem}[lower complexity bound for strongly convex case under linear span assumption]\label{thm:lb-complexity-lam=0-scvx}
	Let $m\le n$ be positive integers, $\mu>0$, and $L_A>0$. For any positive integer $t < \frac{m}{2}$, there exists an instance of \eqref{eq:ECO} such that $f$ is $\mu$-strongly convex, $\|\vA\|=L_A$, and it has a unique pair of primal-dual solution $( \vx ^*, \vy ^*)$. In addition, for any algorithm on solving \eqref{eq:ECO}, if it satisfies Assumption \ref{assum:linear_span}, then we have
	\begin{equation*}
	\|\vx^{(t)}-\vx^*\|^2 \ge \frac{5L_A^2 \|\vy^*\|^2}{256\mu^2 (t+1)^2}.
	\end{equation*}
\end{theorem}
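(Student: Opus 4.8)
The plan is to reuse the three-step template of the two preceding subsections: exhibit a quadratic instance of \eqref{eq:ECO_H}, show the iterates remain trapped in $\cK_{t-1}$, compute the primal--dual pair together with $\|\vy^*\|$, and then bound $\|\vx^{(t)}-\vx^*\|$ from below. The simplest strongly convex objective compatible with the Krylov machinery is $\vH=\mu\vI_n$ and $\vh=\vzero$, so that $f(\vx)=\frac{\mu}{2}\|\vx\|^2$; for the constraint I keep $\vA=\frac{L_A}{2}\vLam$ and $\vb=\frac{L_A}{2}\vc$ as in \eqref{eq:Ab}. Then $f$ is $\mu$-strongly convex and $\|\vA\|=L_A$ by Lemma~\ref{lem:Lamc2Ab}, so the instance has the advertised structural properties.

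The containment $\vx^{(t)}\in\cK_{t-1}$ for $1\le t\le k$ is immediate from Lemma~\ref{lem:xt-sp}: its two hypotheses hold because $\vh=\vzero\in\cK_0$ and $\vH\cK_{t-1}=\mu\cK_{t-1}=\cK_{t-1}\subseteq\cK_t$ by \eqref{eq:KJexpan}. For the primal--dual solution I would argue exactly as in Lemma~\ref{lem:ECO_Hh0_solutions}: splitting $\vx=(\vu^\top,\vv^\top)^\top$ reduces the feasibility system to $\vB\vu=\vone_{2k}$ and $\vG\vv=\vzero$, yielding the unique primal solution $\vx^*$ of \eqref{eq:sol-lam=0}, while the optimality condition $\mu\vx^*+\vA^\top\vy^*=\vzero$ forces $\vpi^*=\vzero$ (as $\vG$ has full row rank) and $\vlam^*=-\frac{2\mu}{L_A}(\vB^\top)^{-1}(1,\dots,2k)^\top$. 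Since $\vB$ is nonsingular and $L_A>0$, both $\vx^*$ and $\vy^*$ are unique.

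The quantitative core is the evaluation of $\|\vy^*\|^2$. Reading off $(\vB^\top)^{-1}$ from \eqref{eq:invB}, each coordinate becomes $\lambda_i^*=-\frac{\mu}{L_A}\,i\,(4k+1-i)$, so that $\|\vy^*\|^2=\frac{\mu^2}{L_A^2}\sum_{i=1}^{2k}i^2(4k+1-i)^2$. Expanding the summand as $a^2i^2-2a\,i^3+i^4$ with $a=4k+1$ and applying the standard power-sum formulas gives the closed form $\|\vy^*\|^2=\frac{\mu^2}{L_A^2}\cdot\frac{4k(2k+1)(4k+1)(8k^2+4k+1)}{15}$. On the iterate side, $\cK_{k-1}=\Span\{\ve_{k+1,n},\dots,\ve_{2k,n}\}$ forces the first $k$ coordinates of any $\vx\in\cK_{k-1}$ to vanish while $x_i^*=i$, so $\|\vx^{(t)}-\vx^*\|^2\ge\min_{\vx\in\cK_{k-1}}\|\vx-\vx^*\|^2=\sum_{i=1}^{k}i^2=\frac{k(k+1)(2k+1)}{6}$.

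Setting $k=t$, the theorem reduces to the scalar inequality $\frac{k(k+1)(2k+1)}{6}\ge\frac{5L_A^2\|\vy^*\|^2}{256\mu^2(k+1)^2}$; substituting the closed form for $\|\vy^*\|^2$, cancelling the common factor $k(2k+1)$, and clearing denominators collapses everything to the polynomial inequality $(4k+1)(8k^2+4k+1)\le 32(k+1)^3$, equivalently $32(k+1)^3-(4k+1)(8k^2+4k+1)=72k^2+88k+31>0$, which holds for every $k\ge1$. I expect the only delicate step to be the exact evaluation of the degree-five sum $\sum_{i=1}^{2k}i^2(4k+1-i)^2$: the factor $\frac{5}{256}$ is plainly tuned so that the two sides agree at leading order in $k$, which means the bound is tight and no cruder estimate of $\|\vy^*\|^2$ will suffice.
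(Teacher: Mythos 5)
Your proposal is correct and follows essentially the same route as the paper's proof: the same instance $\vH=\mu\vI$, $\vh=\vzero$ with $(\vA,\vb)$ from \eqref{eq:Ab}, the same application of Lemma \ref{lem:xt-sp}, the same primal--dual pair (your multiplier differs only by the Lagrangian sign convention, which is immaterial since only $\|\vy^*\|$ enters), and the same power-sum evaluation of $\|\vy^*\|^2$, whose closed form matches the paper's since $16k^2+8k+2=2(8k^2+4k+1)$. Your explicit final check $32(k+1)^3-(4k+1)(8k^2+4k+1)=72k^2+88k+31>0$ correctly carries out the step the paper leaves as ``not difficult to verify.''
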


\begin{proof}
	Set $k=t$ and consider an instance of \eqref{eq:ECO_H} with $\vH=\mu\vI$, $\vh=\vzero$, and $\vA$ and $\vb$ given in \eqref{eq:Ab}. Clearly $f$ is $\mu$-strongly convex, and $\|\vA\|=L_A$. It is easy to verify that Lemma \ref{lem:xt-sp} applies to this instance, and thus $\vx^{(t)}\in \cK_{t-1}$. Also, by writing the KKT condition, we can easily verify that the system has a unique primal-dual solution $( \vx ^*, \vy ^*)$ with $\vx^*$ given in \eqref{eq:sol-lam=0} and $\vy^*$ given by
	\begin{equation}\label{eq:sol-lam=0-y-scvx}
	y^*_i=\left\{\begin{array}{ll}\dfrac{\mu}{L_A}i(4k-i+1), & \text{ if } 1\le i \le 2k,\\[0.2cm]
	0, & \text{ if } i\ge 2k+1. 
	\end{array}\right.
	\end{equation}
	From the formula of $\cK_{i}$ in \eqref{eq:krylov}, it follows that for any $\vx\in\cK_{k-1}$,
	\begin{equation}\label{eq:bd-iter-diff}
	\|\vx-\vx^*\|^2\ge \sum_{i=1}^k i^2 \overset{\eqref{eq:sum-i2}}= \frac{k(k+1)(2k+1)}{6}.
	\end{equation}
	Moreover, by \eqref{eq:sum-i2} and also the formulas
	$$\sum_{i=1}^p i^3=\frac{p^2(p+1)^2}{4},\ \sum_{i=1}^p i^4=\frac{p(p+1)(2p+1)(3p^2+3p-1)}{30},$$
	we have from \eqref{eq:sol-lam=0-y-scvx} that
	\begin{align*}
	\|\vy^*\|^2= &~ \frac{\mu^2}{L_A^2}\sum_{i=1}^{2k} i^2(4k-i+1)^2\\
	=&~ \frac{\mu^2}{L_A^2} \left((4k+1)^2\sum_{i=1}^{2k} i^2-2(4k+1)\sum_{i=1}^{2k} i^3+\sum_{i=1}^{2k} i^4\right)\\
	=&~ \frac{2k(2k+1)(4k+1)\mu^2}{L_A^2}\left(\frac{(4k+1)^2}{6}-k(2k+1)+\frac{12k^2+6k-1}{30}\right)\\
	=&~ \frac{2k(2k+1)(4k+1)\mu^2}{15L_A^2}(16k^2+8k+2).
	\end{align*}
Since $t=k$ and $\vx^{(t)}\in \cK_{t-1}$, it is not difficult to verify the desired result from \eqref{eq:bd-iter-diff} and the above equation, and thus we complete the proof.
\end{proof}

\section{Lower complexity bounds of general first-order methods for affinely constrained problems}
\label{sec:ECO_general_case}

In this section, we drop the linear span assumption (see Assumption \ref{assum:linear_span}) and establish lower complexity bounds of general first-order methods described in \eqref{eq:I} on solving \eqref{eq:ECO}. The key idea is to utilize certain rotational invariance of quadratic functions and linear systems, a technique that was introduced in \cite{nemirovsky1991optimality,nemirovski1992information}. Specifically, we use the following proposition as a main tool and then derive the lower complexity bounds by the results obtained in the previous section. 

\begin{proposition}
	\label{pro:linear_span_ECO}
Let $m\le n$ and $k<\frac{m}{2}$ be positive integers, and let $L_f$ and $L_A$ be nonnegative numbers. Suppose that we have an instance of \eqref{eq:ECO_H}, called \emph{original instance}, where $\|\vH\|\le L_f$, and $\vA$ and $\vb$ are those given in \eqref{eq:Ab}. Moreover, assume that $\vH\in\SS_+^n$ and satisfies $\vH\cK_{2s-1}\subseteq\cK_{2s}$ for any $s\le \frac k 2$ and $\vh\in\cK_0$, where $\cK_{i}$ is defined in \eqref{eq:KJ}.
	Then we have the following results.
\begin{enumerate}
\item	For any first-order method $\cM$ that is described in \eqref{eq:I}, there exists  another problem instance, called \emph{rotated instance},
	\begin{align}
	\label{eq:ECO_rotate}
	\tilde f^*:=\min_{\vx\in \R^n}\big\{\tilde f(\vx), \st \tilde \vA \vx = \vb\big\},
	\end{align}
	such that $\nabla \tilde f$ is $L_f$-Lipschitz continuous, and $\|\tilde \vA\| = \|\vA\|$. More precisely, we have
	\begin{align}
		\label{eq:ECO_UV}
		\tilde f(\vx) = f(\vU\vx)\text{ and }\tilde\vA = \vV^\top \vA\vU,
	\end{align}
	where $\vU$ and $\vV$ are certain orthogonal matrices such that $\vU\vh = \vh$ and $\vV\vb = \vb$. 
\item	In addition, 
	$(\vx^*,\vy^*)$ is a primal-dual solution to the \emph{original instance} if and only if $(\hat\vx,\hat\vy):=(\vU^\top\vx^*,\vV^\top\vy^*)$ is a primal-dual solution to the \emph{rotated instance}. 	
\item Furthermore, when $\cM$ is applied to solve \eqref{eq:ECO_rotate}, for any $1\le t\le \frac{k-4}{2}$, we have:
	\begin{align}
	\label{eq:linear_span_lb-1}
	& \big|\tilde f(\xu^{(t)}) - \tilde f^*\big| \ge \min_{\vx\in\cK_{k-1}} \big|f(\vx) - f^*\big|, 
	\\
	& \tilde f(\xu^{(t)}) - \tilde f^* \ge \min_{\vx\in\cK_{k-1}} f(\vx) - f^*, \label{eq:linear_span_lb-2}
	\\
	& \|\tilde\vA\xu^{(t)} - \vb\|\ge \min_{\vx\in\cK_{k-1}}\|\vA\vx - \vb\|,
	\label{eq:linear_span_lb-3}
	\\
	& \|\xu^{(t)} - \hat\vx\|^2\ge \min_{\vx\in\cK_{k-1}}\|\vx - \vx^*\|^2,
	\label{eq:linear_span_lb-4}
	\end{align}
where $\xu^{(t)}$ is the computed approximate solution by $\cM$. 	
\end{enumerate}
\end{proposition}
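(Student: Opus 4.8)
The plan is to adapt the rotational-invariance technique of \cite{nemirovsky1991optimality,nemirovski1992information}: for a \emph{fixed} deterministic method $\cM$, I would construct orthogonal matrices $\vU,\vV$ that ``hide'' the structure of the original instance, forcing every iterate of $\cM$ on the rotated instance to remain inside the Krylov subspaces $\cK_i$ of \eqref{eq:KJ}. Parts (1) and (2) are then routine bookkeeping. With $\tilde f(\vx)=f(\vU\vx)$ and $\tilde\vA=\vV^\top\vA\vU$ we get $\nabla\tilde f(\vx)=\vU^\top\vH\vU\vx-\vU^\top\vh$, so $\vU^\top\vH\vU\in\SS_+^n$ with $\|\vU^\top\vH\vU\|=\|\vH\|\le L_f$ and $\|\tilde\vA\|=\|\vA\|$ since $\vU,\vV$ are orthogonal; imposing $\vU\vh=\vh$ keeps the linear term equal to $\vh$, and $\vV\vb=\vb$ makes $\tilde\vA\vx=\vb$ the exact rotation of $\vA\vx=\vb$ (so $\tilde f^*=f^*$). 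The equivalence in (2) follows by substituting $(\hat\vx,\hat\vy)=(\vU^\top\vx^*,\vV^\top\vy^*)$ into the KKT system $\vH\vx^*-\vh+\vA^\top\vy^*=\vzero$, $\vA\vx^*=\vb$ and using $\vU\vh=\vh$, $\vV\vb=\vb$.

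The core of the argument is the construction of $\vU,\vV$ for part (3), which I would carry out inductively while simulating $\cM$ on the rotated instance, maintaining the invariants
$$\vU\vx^{(s)}\in\cK_{2s-1},\qquad \vV\vy^{(s)}\in\cJ_{2s-1},\qquad s\ge 1,$$
with $\vx^{(0)}=\vy^{(0)}=\vzero$ (legitimate by the shift convention). The key point is that each oracle response stays exactly one Krylov layer ahead: when $\vU\vx^{(s)}\in\cK_{2s-1}$, the hypothesis $\vH\cK_{2s-1}\subseteq\cK_{2s}$ gives $\vH\vU\vx^{(s)}-\vh\in\cK_{2s}$, while \eqref{eq:AonKJ} (applied to $\vA=\tfrac{L_A}{2}\vLam$) gives $\vA\vU\vx^{(s)}\in\cJ_{2s}$ and $\vA^\top\vV\vy^{(s)}\in\cK_{2s-1}\subseteq\cK_{2s}$. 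Hence all information accumulated through step $s$ lies in $\cK_{2s}$ (resp.\ $\cJ_{2s}$) in the original coordinates. When $\cM$ then produces $\vx^{(s+1)},\vy^{(s+1)}$ from this information (and from whatever fixed vectors it reads off $\vtheta$), I extend $\vU,\vV$ on the not-yet-determined directions, using the strict nesting \eqref{eq:KJexpan} to supply a fresh coordinate, so as to route $\vU\vx^{(s+1)}\in\cK_{2s+1}$ and $\vV\vy^{(s+1)}\in\cJ_{2s+1}$; since each query is a single vector, it introduces at most one new direction, which I place orthogonally to $\cK_{2s}$ so that previously computed responses are unchanged and the simulation stays consistent. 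The final output $\xu^{(t)}$, produced by $\cI_{t-1}$ from information in $\cK_{2t-2}$, is routed the same way into $\cK_{2t-1}$.

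Granting this, for $1\le t\le\frac{k-4}{2}$ we have $2t-1\le k-1$, so $\vU\xu^{(t)}\in\cK_{2t-1}\subseteq\cK_{k-1}$, and the four bounds follow by transporting through the rotation. Since $\tilde f(\xu^{(t)})=f(\vU\xu^{(t)})$ and $\tilde f^*=f^*$, we obtain $\tilde f(\xu^{(t)})-\tilde f^*\ge\min_{\vx\in\cK_{k-1}}(f(\vx)-f^*)$ and the same with absolute values, giving \eqref{eq:linear_span_lb-1}--\eqref{eq:linear_span_lb-2}; using $\vV^\top\vb=\vb$ and orthogonality, $\|\tilde\vA\xu^{(t)}-\vb\|=\|\vV^\top(\vA\vU\xu^{(t)}-\vb)\|=\|\vA(\vU\xu^{(t)})-\vb\|\ge\min_{\vx\in\cK_{k-1}}\|\vA\vx-\vb\|$, which is \eqref{eq:linear_span_lb-3}; and $\|\xu^{(t)}-\hat\vx\|^2=\|\vU\xu^{(t)}-\vx^*\|^2\ge\min_{\vx\in\cK_{k-1}}\|\vx-\vx^*\|^2$, which is \eqref{eq:linear_span_lb-4}.

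I expect the main obstacle to be the rotation construction itself: one must verify that the partial isometries defined at each step can always be completed to genuine orthogonal matrices while simultaneously (i) fixing $\vh$ and $\vb$ (consistent since $\vh\in\cK_0$, $\vb\in\cJ_0$), (ii) routing each new query into the next Krylov layer, and (iii) not perturbing the already-revealed responses; and one must confirm that the dimension budget never runs out, which is precisely where the restriction $t\le\frac{k-4}{2}$ is used, keeping all indices below $2k-1$ so that \eqref{eq:KJexpan} guarantees strictly growing subspaces. The accompanying index bookkeeping --- the factor of two coming from the combined oracle $(\nabla f,\vA\vx,\vA^\top\vy)$ together with the odd/even alternation forced by $\vH\cK_{2s-1}\subseteq\cK_{2s}$ --- is delicate but mechanical once the invariants are fixed.
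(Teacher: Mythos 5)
Your proposal is correct and follows essentially the same route as the paper: an adaptive rotation construction (the paper's Lemma \ref{lem:ortho_map}, Lemma \ref{lem:PhiPsi-cond}, Lemma \ref{lem:krylov}, and Proposition \ref{pro:krylov}) in which each new rotation fixes one Krylov layer beyond the queried points so that all revealed oracle responses are unchanged, routes each new iterate into the next layer, and finishes with an extra rotation for the output, after which parts (1)--(3) follow by orthogonality and the KKT system exactly as you describe. The only deviations are harmless index offsets: the paper does not assume $\vx^{(0)}=\vzero$ (the shift argument would move $\vb$ off the multiple-of-$\vc$ form required by the construction, so it instead rotates $\vx^{(0)}$ into $\cK_1$ at the base case), and routing both $\vx^{(t)}$ and $\xu^{(t)}$ requires two fresh layers rather than your single layer $\cK_{2t-1}$ (one application of the extension lemma supplies only one new dimension), which is why the paper lands on $\xu^{(t)}\in\vU^\top\cK_{2t+3}$ and uses the budget $t\le\frac{k-4}{2}$ exactly --- slack your accounting comfortably absorbs.
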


The proof of Proposition \ref{pro:linear_span_ECO} is rather technical and deferred after we present the lower complexity bound results. Here we give a few remarks on this proposition.
First, in Proposition \ref{pro:linear_span_ECO} there are two problem instances, which have been distinguished  as \emph{original} and \emph{rotated} instances, respectively. Second, the results in \eqref{eq:linear_span_lb-1} through \eqref{eq:linear_span_lb-4} establish an important relation between the original and rotated instances. Specifically, by this relation, we are able to  study the best possible performance of general first-order methods through the linear subspace $\cK_{k-1}$. 

\subsection{Lower complexity bounds}
In this subsection, we apply Proposition \ref{pro:linear_span_ECO} together with Lemmas \ref{lem:ECO_Hh0_KkBounds} and  \ref{lem:ECO_HQh_KkBounds}, and Theorem \ref{thm:lb-complexity-lam=0-scvx} to establish the lower complexity bounds of general first-order methods on solving \eqref{eq:ECO}. The theorem below extends the results in Theorem \ref{thm:ECO_Hh0_lb-complexity-lam=0}, and Remarks \ref{rmk:ECO_Hh0_lb-complexity-lam=0} and \ref{rmk:ECO_HQh_lb-complexity-lam=0} also apply here. 

\begin{theorem}[lower complexity bound I of general first-order methods]
	\label{thm:ECO_Hh0_general}
	Let $8<m\le n$ be positive integers, and $L_f$ and $L_A$ be positive numbers. For any positive integer $t < \frac{m}{4}-2$ and any first-order method $\cM$ that is described in \eqref{eq:I}, there exists an instance of \eqref{eq:ECO_rotate} such that $\nabla \tilde f$ is $L_f$-Lipschitz continuous and $\|\tilde\vA\|=L_A$. In addition, the instance has a primal-dual solution $( \hat\vx, \hat\vy)$, and 
		\begin{align*}
		\big|\tilde f( \bar\vx^{(t)} )-\tilde f^*\big|\ge & ~ \frac{3L_f\|\hat\vx\|^2}{32(2t+5)} + \frac{\sqrt{6}}{32(2t+5)}L_A\|\hat\vx\|\cdot\|\hat\vy\|,  \\[0.1cm]
		\| \tilde\vA  \bar\vx^{(t)} - \vb \| \ge &~  \frac{\sqrt{3}L_A\|\hat\vx\|}{4\sqrt{2}(2t+5)},
				\end{align*}
	where $ \bar\vx^{(t)} $ is the approximate solution output by $\cM$.		
\end{theorem}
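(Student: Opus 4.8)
The plan is to invoke Proposition \ref{pro:linear_span_ECO} with the quadratic instance \eqref{eq:ECO_Hh0} as the \emph{original instance}, and then push the Krylov-subspace lower bounds of Lemma \ref{lem:ECO_Hh0_KkBounds} through to the \emph{rotated instance} that the proposition produces. In this way the linear span assumption used in Theorem \ref{thm:ECO_Hh0_lb-complexity-lam=0} is removed, at the cost of roughly halving the effective iteration count.

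First I would fix the iteration index $t$ and choose the Krylov parameter $k=2t+4$, the smallest value allowed by the requirement $1\le t\le\frac{k-4}{2}$ in Proposition \ref{pro:linear_span_ECO}; this choice yields $k+1=2t+5$, exactly the denominator appearing in the claimed bounds. The hypothesis $t<\frac{m}{4}-2$ guarantees $k=2t+4<\frac{m}{2}$, so $k$ is admissible for the proposition and for all the earlier lemmas.

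Next I would take \eqref{eq:ECO_Hh0} (with this $k$) and verify the hypotheses of Proposition \ref{pro:linear_span_ECO}. Its Hessian $\vH$ is diagonal and positive semidefinite with largest diagonal entry $L_f$, so $\vH\in\SS_+^n$ and $\|\vH\|=L_f$ (in particular $\|\vH\|\le L_f$); being diagonal, $\vH$ maps each coordinate subspace $\cK_i$ into itself, so $\vH\cK_{2s-1}\subseteq\cK_{2s-1}\subseteq\cK_{2s}$ for every $s\le\frac{k}{2}$; and $\vh=\vzero\in\cK_0$, while $\vA,\vb$ are precisely those in \eqref{eq:Ab}. Applying the proposition then supplies a rotated instance \eqref{eq:ECO_rotate} with $\tilde f(\vx)=f(\vU\vx)$ and $\tilde\vA=\vV^\top\vA\vU$ for orthogonal $\vU,\vV$, so that $\nabla\tilde f$ is $L_f$-Lipschitz and $\|\tilde\vA\|=\|\vA\|=L_A$, together with the primal-dual solution $(\hat\vx,\hat\vy)=(\vU^\top\vx^*,\vV^\top\vy^*)$. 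Orthogonality gives $\|\hat\vx\|=\|\vx^*\|$ and $\|\hat\vy\|=\|\vy^*\|$, where $(\vx^*,\vy^*)$ is the solution of \eqref{eq:ECO_Hh0} identified in Lemma \ref{lem:ECO_Hh0_solutions}.

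Finally I would chain the inequalities. Bounds \eqref{eq:linear_span_lb-1} and \eqref{eq:linear_span_lb-3} give $\big|\tilde f(\bar\vx^{(t)})-\tilde f^*\big|\ge\min_{\vx\in\cK_{k-1}}|f(\vx)-f^*|$ and $\|\tilde\vA\bar\vx^{(t)}-\vb\|\ge\min_{\vx\in\cK_{k-1}}\|\vA\vx-\vb\|$, and Lemma \ref{lem:ECO_Hh0_KkBounds} lower-bounds the two Krylov minima; substituting $k+1=2t+5$, $\|\vx^*\|=\|\hat\vx\|$ and $\|\vy^*\|=\|\hat\vy\|$ reproduces the two claimed estimates. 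Since the substantive work—the rotational-invariance argument eliminating the linear span assumption—is carried entirely by Proposition \ref{pro:linear_span_ECO} (proved separately), the only real care needed here is the index bookkeeping: checking that $k=2t+4$ simultaneously satisfies $t\le\frac{k-4}{2}$ and $k<\frac{m}{2}$, and that the inclusion $\vH\cK_{2s-1}\subseteq\cK_{2s}$ demanded by the proposition (which differs slightly from the $\vH\cK_{t-1}\subseteq\cK_t$ used in Lemma \ref{lem:xt-sp}) genuinely holds for the diagonal Hessian of \eqref{eq:ECO_Hh0}. That index-matching is the step I expect to be the main, if modest, obstacle.
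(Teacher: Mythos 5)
Your proposal is correct and follows essentially the same route as the paper's own proof: set $k=2t+4$, verify that the instance \eqref{eq:ECO_Hh0} satisfies the hypotheses of Proposition \ref{pro:linear_span_ECO}, and chain \eqref{eq:linear_span_lb-1} and \eqref{eq:linear_span_lb-3} with Lemma \ref{lem:ECO_Hh0_KkBounds}, using orthogonality of $\vU,\vV$ to transfer $\|\vx^*\|,\|\vy^*\|$ to $\|\hat\vx\|,\|\hat\vy\|$. Your explicit check that the diagonal $\vH$ satisfies $\vH\cK_{2s-1}\subseteq\cK_{2s-1}\subseteq\cK_{2s}$ is exactly the verification the paper leaves implicit, and the index bookkeeping ($k+1=2t+5$, $k<\frac{m}{2}$, $t\le\frac{k-4}{2}$) matches the paper's.
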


\begin{proof}
	Set $k=2t+4 < \frac{m}{2}$ in the definition of $\vLam$ and $\vc$ given in \eqref{eq:Lamc}. Consider the problem instance \eqref{eq:ECO_Hh0} and let it be the \emph{original} instance. It is easy to check that the data satisfy the conditions required in Proposition \ref{pro:linear_span_ECO}. Hence, by items 1 and 3 of Proposition \ref{pro:linear_span_ECO}, there exists a \emph{rotated} instance \eqref{eq:ECO_rotate} such that \eqref{eq:linear_span_lb-1} and  \eqref{eq:linear_span_lb-3} hold. Applying Lemma \ref{lem:ECO_Hh0_KkBounds} together with these two inequalities, we have
	\begin{subequations}\label{eq:ECO_obj-feas-err-lam=0-sp-tmp}
	\begin{align}
		\big|\tilde f(\xu^{(t)}) - \tilde f^*\big| \ge & ~\min_{\vx\in\cK_{k-1}}\big|f(\vx) - f^*\big|\ge  \frac{3L_f\|\vx^*\|^2}{32(k+1)} + \frac{\sqrt{6}}{32(k+1)}L_A\|\vx^*\|\cdot\|\vy^*\|, 
		\\[0.1cm]
		\|\tilde\vA\xu^{(t)} - \vb\|\ge & ~\min_{\vx\in\cK_{k-1}}\|\vA\vx - \vb\| \ge  \frac{\sqrt{3}L_A\|\vx^*\|}{4\sqrt{2}(k+1)},
		\end{align}
	\end{subequations}
	where $(\vx^*,\vy^*)$ is the unique pair of primal-dual solution to the original instance \eqref{eq:ECO_Hh0}. By item 2 of Proposition \ref{pro:linear_span_ECO}, the rotated instance \eqref{eq:ECO_rotate} also has a unique primal-dual solution $(\hat\vx,\hat\vy)$ given by $\hat\vx = \vU^\top\vx^*$ and $\hat\vy = \vV^\top\vy^*$. Since $\vU$ and $\vV$ are orthogonal, it holds that $\|\vx^*\| = \|\hat\vx\|$ and $\|\vy^*\| = \|\hat\vy\|$. Therefore, noting that $k=2t+4$, we obtain the desired results from the two inequalities in \eqref{eq:ECO_obj-feas-err-lam=0-sp-tmp} and complete the proof.
\end{proof}

In Theorem \ref{thm:ECO_Hh0_general}, $L_A>0$ is required. The following theorem allows $L_A=0$ and extends the results in Theorem \ref{thm:ECO_HQh_lb-complexity-lam=0} to general first-order methods. 
\begin{theorem}[lower complexity bound II of general first-order methods]
	\label{thm:ECO_HQh_general}
	Let $8<m\le n$ be positive integers, $L_f>0$, and $L_A\ge0$. Assume $L_f\ge L_A$. For any positive integer $t < \frac{m}{4}-2$ and any first-order method $\cM$ that is described in \eqref{eq:I}, there exists an instance of \eqref{eq:ECO_rotate} such that $\nabla \tilde f$ is $L_f$-Lipschitz continuous and $\|\tilde\vA\|=L_A$. In addition, the instance has a primal-dual solution $(\hat{\vx},\hat{\vy})$, and
	\begin{subequations}
		\label{eq:obj-feas-err-lam=0-gen}
		\begin{align}
		\tilde f( \xu^{(t)} )-\tilde f^*\ge & ~ \frac{3L_f\|\hat\vx\|^2}{128(2t+5)^2} + \frac{\sqrt{3}L_A\|\hat\vx\|\cdot\|\hat\vy\|}{8(2t+5)}, 
		\label{eq:ECO_HQh_obj-err-lam=0-gen} 
		\\[0.1cm]
		\| \tilde \vA  \xu^{(t)} - \vb \| \ge &~  \frac{\sqrt{3}L_A\|\hat\vx\|}{4\sqrt{2}(2t+5)},
		\label{eq:ECO_HQh_feas-err-lam=0-gen}
		\end{align}
	\end{subequations}
	where $ \bar\vx^{(t)} $ is the approximate solution output by $\cM$. 
\end{theorem}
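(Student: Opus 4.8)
The plan is to reproduce essentially verbatim the argument used for Theorem~\ref{thm:ECO_Hh0_general}, but replacing the original instance \eqref{eq:ECO_Hh0} by the sharper instance of \eqref{eq:ECO_H} with data \eqref{eq:HQh}, and correspondingly replacing Lemma~\ref{lem:ECO_Hh0_KkBounds} by Lemma~\ref{lem:ECO_HQh_KkBounds}. The whole point is that the rotation machinery of Proposition~\ref{pro:linear_span_ECO} transfers the linear-span lower bounds of Section~\ref{sec:LfLAbound} to \emph{arbitrary} first-order methods of the form \eqref{eq:I}, at the cost of only a factor-two slowdown in the effective iteration count.

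First I would set $k = 2t+4$, so that the hypothesis $t < \frac{m}{4}-2$ is exactly equivalent to $k < \frac{m}{2}$, which is what the construction of $\vLam$ and $\vc$ in \eqref{eq:Lamc} requires. I would then declare the instance of \eqref{eq:ECO_H} with data \eqref{eq:HQh} to be the \emph{original} instance and check that it meets all hypotheses of Proposition~\ref{pro:linear_span_ECO}: $\vH \in \SS_+^n$ (since $\vB^\top\vB \succ 0$ and $\vI \succeq 0$), $\|\vH\| \le L_f$ (shown just before Lemma~\ref{lem:ECO_HQh_xt-sp}), $\vh \in \cK_0$, and $\vH\cK_{2s-1}\subseteq\cK_{2s}$ for $s\le \frac{k}{2}$ — the last two being immediate from Lemma~\ref{lem:ECO_HQh_xt-sp}, which in fact verifies the stronger $\vH\cK_{t-1}\subseteq\cK_t$ for every $1\le t\le k$. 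Items~1 and~3 of the proposition then produce a rotated instance \eqref{eq:ECO_rotate} with $\nabla\tilde f$ being $L_f$-Lipschitz and $\|\tilde\vA\| = \|\vA\| = L_A$, for which \eqref{eq:linear_span_lb-2} and \eqref{eq:linear_span_lb-3} hold; since $k=2t+4$ gives $\frac{k-4}{2}=t$, the admissible range $1\le t\le\frac{k-4}{2}$ is saturated, so these inequalities are valid precisely for our $t$. Chaining them with Lemma~\ref{lem:ECO_HQh_KkBounds} (applicable because $L_f\ge L_A$) yields $\tilde f(\xu^{(t)})-\tilde f^*\ge \frac{3L_f\|\vx^*\|^2}{128(k+1)^2}+\frac{\sqrt3 L_A\|\vx^*\|\,\|\vy^*\|}{8(k+1)}$ together with $\|\tilde\vA\xu^{(t)}-\vb\|\ge\frac{\sqrt3 L_A\|\vx^*\|}{4\sqrt2(k+1)}$, where $(\vx^*,\vy^*)$ is the primal-dual pair from Lemma~\ref{lem:HQh}. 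Finally, item~2 identifies the rotated solution $(\hat\vx,\hat\vy)=(\vU^\top\vx^*,\vV^\top\vy^*)$, and orthogonality of $\vU,\vV$ gives $\|\hat\vx\|=\|\vx^*\|$ and $\|\hat\vy\|=\|\vy^*\|$; substituting $k+1=2t+5$ produces exactly \eqref{eq:ECO_HQh_obj-err-lam=0-gen} and \eqref{eq:ECO_HQh_feas-err-lam=0-gen}.

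The genuinely hard work is entirely contained in Proposition~\ref{pro:linear_span_ECO}, which I am assuming; against that backdrop this theorem is a bookkeeping exercise. The two points that require a little care are (i) using the \emph{signed} bound \eqref{eq:linear_span_lb-2} rather than the absolute-value bound \eqref{eq:linear_span_lb-1}, which is legitimate here because for this instance $f(\vx)\ge f^*$ on all of $\cK_{k-1}$ (exactly the content of \eqref{eq:bd-min-f-diff} inside the proof of Lemma~\ref{lem:ECO_HQh_KkBounds}), so the conclusion matches the signed statement $\tilde f(\xu^{(t)})-\tilde f^*\ge\cdots$ of the theorem; and (ii) the index substitution $k=2t+4$, which is the price of dropping the linear-span assumption and which is what turns the denominators $(k+1)^2$ and $(k+1)$ of the linear-span bounds into the $(2t+5)^2$ and $(2t+5)$ appearing in the statement.
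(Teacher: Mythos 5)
Your proposal is correct and follows essentially the same route as the paper, which itself only sketches this proof: set $k=2t+4$, verify that the instance of \eqref{eq:ECO_H} with data \eqref{eq:HQh} satisfies the hypotheses of Proposition~\ref{pro:linear_span_ECO}, and chain \eqref{eq:linear_span_lb-2} and \eqref{eq:linear_span_lb-3} with Lemma~\ref{lem:ECO_HQh_KkBounds}, exactly as in the proof of Theorem~\ref{thm:ECO_Hh0_general}. Your point (i) is a harmless over-precaution — the signed inequality \eqref{eq:linear_span_lb-2} holds unconditionally from the rotation argument, with positivity of the right-hand side supplied afterwards by Lemma~\ref{lem:ECO_HQh_KkBounds} — but this does not affect correctness.
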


For strongly convex case, we below generalize Theorem \ref{thm:lb-complexity-lam=0-scvx} to any first-order method given in \eqref{eq:I}.

\begin{theorem}[lower complexity bound of general first-order methods for strongly convex case]
	\label{thm:ECO_scvx_general}
	Let $8<m\le n$ be positive integers, and $\mu$ and $L_A$ be positive numbers. For any positive integer $t < \frac{m}{4}-2$ and any first-order method $\cM$ that is described in \eqref{eq:I}, there exists an instance of \eqref{eq:ECO_rotate} such that $\tilde f$ is $\mu$-strongly convex, and $\|\tilde\vA\|=L_A$. In addition, the instance has a unique primal-dual solution $( \hat\vx, \hat\vy)$, and
	\begin{equation}\label{eq:lbd-complexity-scvx-sp}
	\|\xu^{(t)}-\hat\vx\|^2 \ge \frac{5L_A^2 \|\vy^*\|^2}{256\mu^2 (2t+5)^2},
	\end{equation}
	where $ \bar\vx^{(t)} $ is the approximate solution of output by $\cM$.
\end{theorem}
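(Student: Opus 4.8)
The plan is to mirror exactly the reductions used in the proofs of Theorems~\ref{thm:ECO_Hh0_general} and~\ref{thm:ECO_HQh_general}, transporting the linear-span bound of Theorem~\ref{thm:lb-complexity-lam=0-scvx} to the general oracle model \eqref{eq:I} by means of the rotation Proposition~\ref{pro:linear_span_ECO}. Concretely, I would set $k=2t+4$ in the definitions \eqref{eq:Lamc} and \eqref{eq:B} of $\vLam$ and $\vc$, and take as the \emph{original instance} the strongly convex quadratic program underlying Theorem~\ref{thm:lb-complexity-lam=0-scvx}, namely \eqref{eq:ECO_H} with $\vH=\mu\vI$, $\vh=\vzero$, and $\vA,\vb$ as in \eqref{eq:Ab}. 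The hypothesis $t<\frac m4-2$ gives $k=2t+4<\frac m2$, and it simultaneously yields $\frac{k-4}{2}=t$, so the admissibility condition $1\le t\le\frac{k-4}{2}$ required by item~3 of Proposition~\ref{pro:linear_span_ECO} holds with equality.

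Next I would verify that this instance meets the structural hypotheses of Proposition~\ref{pro:linear_span_ECO}, applied with Lipschitz parameter $L_f:=\mu$. Since $\vH=\mu\vI$ we have $\|\vH\|=\mu\le L_f$; moreover $\vH\cK_{2s-1}=\mu\cK_{2s-1}=\cK_{2s-1}\subseteq\cK_{2s}$ for every $s\le\frac k2$ by the nesting \eqref{eq:KJexpan}, and $\vh=\vzero\in\cK_0$ trivially. With all three conditions satisfied, the proposition produces a rotated instance \eqref{eq:ECO_rotate} with $\tilde f(\vx)=f(\vU\vx)$ and $\tilde\vA=\vV^\top\vA\vU$ for orthogonal $\vU,\vV$, satisfying $\|\tilde\vA\|=\|\vA\|=L_A$ and, by item~2, possessing the unique primal-dual solution $(\hat\vx,\hat\vy)=(\vU^\top\vx^*,\vV^\top\vy^*)$.

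To conclude, I would invoke the distance inequality \eqref{eq:linear_span_lb-4}, which gives $\|\xu^{(t)}-\hat\vx\|^2\ge\min_{\vx\in\cK_{k-1}}\|\vx-\vx^*\|^2$ for the rotated instance. The proof of Theorem~\ref{thm:lb-complexity-lam=0-scvx} already controls this minimum: combining the subspace estimate \eqref{eq:bd-iter-diff} with the computed value of $\|\vy^*\|^2$ there yields $\min_{\vx\in\cK_{k-1}}\|\vx-\vx^*\|^2\ge\frac{5L_A^2\|\vy^*\|^2}{256\mu^2(k+1)^2}$. Substituting $k+1=2t+5$ and using that $\vV$ is orthogonal (so $\|\vy^*\|=\|\hat\vy\|$) delivers the claimed bound \eqref{eq:lbd-complexity-scvx-sp}. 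I would also record that the rotation preserves strong convexity: from $\nabla\tilde f(\vx)=\vU^\top\nabla f(\vU\vx)$ and orthogonality of $\vU$, one has $\langle\nabla\tilde f(\vx_1)-\nabla\tilde f(\vx_2),\vx_1-\vx_2\rangle=\langle\nabla f(\vU\vx_1)-\nabla f(\vU\vx_2),\vU\vx_1-\vU\vx_2\rangle\ge\mu\|\vx_1-\vx_2\|^2$, so $\tilde f$ is indeed $\mu$-strongly convex.

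The only genuinely delicate point is that Proposition~\ref{pro:linear_span_ECO} assumes the \emph{weaker} invariance $\vH\cK_{2s-1}\subseteq\cK_{2s}$ (advancing the Krylov subspace only on every other level, to account for the two actions $\vLam$ and $\vLam^\top$ that a single oracle call in \eqref{eq:I} can exploit) rather than the step-by-step invariance used under the linear-span assumption; here this causes no difficulty since $\vH=\mu\vI$ fixes every subspace. The remainder is bookkeeping: tracking the index shift from the linear-span count $k$ to the general-oracle count $t$ through $k=2t+4$, and re-expressing the bound through the rotated solution norms. All substantive content is carried by Proposition~\ref{pro:linear_span_ECO}, whose proof is where the real work lies; granting that proposition, the strongly convex case is the most direct of the three general-method theorems, because the single measure $\|\xu^{(t)}-\hat\vx\|^2$ is controlled immediately by \eqref{eq:linear_span_lb-4}, without recourse to objective-value or feasibility estimates.
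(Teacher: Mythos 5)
Your proposal is correct and follows exactly the route the paper sketches for this theorem: set $k=2t+4$, take the strongly convex instance of Theorem~\ref{thm:lb-complexity-lam=0-scvx} (namely \eqref{eq:ECO_H} with $\vH=\mu\vI$, $\vh=\vzero$) as the original instance, apply Proposition~\ref{pro:linear_span_ECO} with inequality \eqref{eq:linear_span_lb-4}, and transfer the Krylov-subspace estimate \eqref{eq:bd-iter-diff} together with the value of $\|\vy^*\|^2$, using orthogonality to identify $\|\vy^*\|=\|\hat\vy\|$. Your added verifications (the hypothesis $\vH\cK_{2s-1}\subseteq\cK_{2s}$ holds trivially for $\mu\vI$, and rotation preserves $\mu$-strong convexity) are details the paper leaves implicit, and they check out.
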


The proofs of Theorems \ref{thm:ECO_HQh_general} and \ref{thm:ECO_scvx_general} are similar to that of Theorem \ref{thm:ECO_Hh0_general}. To show Theorem \ref{thm:ECO_HQh_general}, one can apply \eqref{eq:linear_span_lb-2} and \eqref{eq:linear_span_lb-3} together with Lemma \ref{lem:ECO_HQh_KkBounds}, and for Theorem \ref{thm:ECO_scvx_general}, one can use \eqref{eq:linear_span_lb-4} together with Theorem \ref{thm:lb-complexity-lam=0-scvx}. We omit the details.

\subsection{Proof of Proposition \ref{pro:linear_span_ECO}}

This subsection is dedicated to the technical details on the proof of Proposition \ref{pro:linear_span_ECO}. In section \ref{sec:lb-spp}, a similar proposition (i.e., Proposition \ref{pro:linear_span_NCO}) will be shown for the SPPs. Since an affinely constrained problem can be equivalently formulated as one SPP, we conduct the analysis directly on instances of SPP. For ease of notation, we define a specific class of SPPs as follows.  
\begin{definition}\label{def:ptheta-HA}
Given $\vH\in\SS_+^{n}$, $\vA\in\RR^{m\times n}$, and $\vtheta=(\vh, \vb, X, Y, 0)$, $P(\vtheta; \vH,\vA)$ denotes as one instance of \eqref{eq:SPP} with $f(\vx)=\frac{1}{2}\vx^\top \vH\vx - \vh^\top\vx$ and $g=0$. 
\end{definition}
If $X=\RR^n$, and $Y=\RR^m$, then $P(\vtheta;\vH,\vA)$ is an instance of \eqref{eq:ECO_H}. On an instance $P(\vtheta;\vH,\vA)$, the first-order method $\cM$ described in \eqref{eq:I} can be written as 
\begin{align}
\label{eq:I_QP}
\left(\vx^{(t+1)}, \vy^{(t+1)}, \xu^{(t+1)}, \yu^{(t+1)}\right) = \cI_t\left(\vtheta;\vH\vx^{(0)},\vA\vx^{(0)},\vA^\top \vy^{(0)},\ldots,\vH\vx^{(t)},\vA\vx^{(t)},\vA^\top \vy^{(t)}\right),\,\forall\, t\ge 0.
\end{align}

We start our proof with several technical lemmas. 
The following lemma is an elementary result of linear subspaces and will be used several times in our analysis.
\begin{lemma}
	\label{lem:ortho_map}
	Let $\cX\subsetneq \bar \cX\subseteq\RR^p$ be two linear subspaces. Then for any $\bar\vx\in \RR^p$, there exists an orthogonal matrix $\vV\in\RR^{p\times p}$ such that
	\begin{align}
	\label{eq:lemU}
	\vV\vx = \vx,\ \forall \vx\in \cX,\text{ and }\vV\bar\vx\in\bar \cX.
	\end{align}
	\end{lemma}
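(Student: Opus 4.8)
The plan is to reduce the construction to an orthogonal transformation acting only on the orthogonal complement of $\cX$. First I would split $\RR^p = \cX \oplus \cX^\perp$ orthogonally. Any orthogonal matrix that fixes $\cX$ pointwise acts as the identity on $\cX$, so in a basis adapted to this splitting it has the block-diagonal form
\begin{align*}
\vV = \begin{pmatrix} \vI & \vzero \\ \vzero & \vW \end{pmatrix},
\end{align*}
where $\vW$ is an arbitrary orthogonal transformation of $\cX^\perp$. With this form the first requirement $\vV\vx = \vx$ for all $\vx \in \cX$ holds automatically, and the whole problem collapses to choosing $\vW$ well.

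Next I would set $\cW := \bar\cX \cap \cX^\perp$, the orthogonal complement of $\cX$ inside $\bar\cX$, so that $\bar\cX = \cX \oplus \cW$. The strict inclusion $\cX \subsetneq \bar\cX$ enters exactly here: it forces $\dim \cW \ge 1$, so $\cW$ contains a nonzero vector. Decomposing the given vector as $\bar\vx = \bar\vx_{\cX} + \bar\vx_\perp$ with $\bar\vx_{\cX}\in\cX$ and $\bar\vx_\perp\in\cX^\perp$, one computes $\vV\bar\vx = \bar\vx_{\cX} + \vW\bar\vx_\perp$. Since $\bar\vx_{\cX}\in\cX\subseteq\bar\cX$ and $\vW\bar\vx_\perp\in\cX^\perp$, the membership $\vV\bar\vx\in\bar\cX$ is equivalent to $\vW\bar\vx_\perp\in\cW$. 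Hence it suffices to produce an orthogonal $\vW$ on $\cX^\perp$ that carries $\bar\vx_\perp$ into $\cW$.

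The construction of $\vW$ is the heart of the argument, though it is elementary. If $\bar\vx_\perp=\vzero$, then $\bar\vx\in\cX\subseteq\bar\cX$ already and I take $\vV=\vI$. Otherwise I pick a unit vector $\vw\in\cW$ (possible since $\dim\cW\ge1$) and designate the target $\|\bar\vx_\perp\|\,\vw\in\cW$. Both $\bar\vx_\perp$ and $\|\bar\vx_\perp\|\,\vw$ lie in $\cX^\perp$ and share the same norm $\|\bar\vx_\perp\|$, so a Householder reflection on $\cX^\perp$ (or a planar rotation in the plane they span, acting as the identity on the rest of $\cX^\perp$) gives an orthogonal $\vW$ with $\vW\bar\vx_\perp = \|\bar\vx_\perp\|\,\vw\in\cW$. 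Assembling $\vV$ in the block form above then yields an orthogonal matrix fixing $\cX$ pointwise with $\vV\bar\vx\in\bar\cX$. The only subtlety worth checking is norm preservation: an orthogonal map cannot change $\|\bar\vx_\perp\|$, so the image must have that norm, which is precisely why a one-dimensional $\cW$ already suffices, since the subspace $\cW$ contains vectors of every norm in its chosen direction.
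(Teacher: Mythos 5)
Your proposal is correct and takes essentially the same approach as the paper: both fix $\cX$ pointwise and send the $\cX^\perp$-component of $\bar\vx$ to a norm-matched vector in $\bar\cX\cap\cX^\perp$ (the paper's $\vu_{s+1}$, obtained by extending an orthonormal basis of $\cX$ to one of $\bar\cX$, is exactly your unit vector $\vw$). Your only addition is to make the existence of this rotation explicit via a Householder reflection on $\cX^\perp$, a standard fact the paper simply asserts.
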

	
	\begin{proof}
		If $\bar\vx\in \cX$, then we can simply choose $\vV=\vI$. Otherwise, we decompose $\bar\vx = \vy+\vz$, where $\vzero\neq\vy\in \cX^\perp$ and $\vz\in \cX$. Let $s=\dim(\cX)$ and $t=\dim(\bar\cX)>s$. Assume $\vu_1,\ldots,\vu_s$ to be an orthonormal basis of $\cX$. We extend it to $\vu_1,\ldots,\vu_t$, an orthonormal basis of $\bar \cX$. The desired result in \eqref{eq:lemU} is then obtained by choosing $\vV$ as an orthogonal matrix such that $\vV\vu_i=\vu_i,\,\forall\,i=1,\ldots,s$, and $\vV\vy = \|\vy\|\vu_{s+1}$.  
	\end{proof}

By Lemma \ref{lem:ortho_map}, we show the results below.

\begin{lemma}\label{lem:PhiPsi-cond}
Given $m\le n$ and $k < \frac{m}{2}$, let $\vLam$ be the matrix in \eqref{eq:Lamc}. Let $s\le \frac k 2$ be a positive integer, $\vH\in\SS_+^n$, and $\vU, \vPhi\in\RR^{n\times n}$ and $\vV,\vPsi\in\RR^{m\times m}$ be orthogonal matrices. If $\vH\cK_{2s-1}\subseteq \cK_{2s}$, and
\begin{equation}\label{eq:PhiPsi-cond}
\vPhi \vx = \vx, \forall\, \vx\in \vU^\top \cK_{2s},~ \mathrm{and}~\vPsi \vy = \vy,\, \forall\, \vy\in \vV^\top \cJ_{2s},
\end{equation}
then for any $ \vx\in \vU^\top \cK_{2s-1}$ and any $\vy\in \vV^\top \cJ_{2s-1}$, it holds:
\begin{equation*}
		 \tilde{\vU}^\top \vH\tilde{\vU} \vx = \vU^\top \vH\vU\vx,\ \tilde{\vV}^\top \vLam\tilde{\vU} \vx = \vV^\top \vLam\vU \vx, \text{ and }\tilde{\vU}^\top \vLam^\top \tilde{\vV} \vy = \vU^\top \vLam^\top \vV \vy.
		\end{equation*}
where $\tilde{\vU}=\vU\vPhi$ and $\tilde{\vV}=\vV\vPsi$.		
\end{lemma}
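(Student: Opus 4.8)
The plan is to substitute $\tilde\vU = \vU\vPhi$ and $\tilde\vV = \vV\vPsi$ into each of the three expressions and then invoke the fixing hypotheses \eqref{eq:PhiPsi-cond} twice: once to strip off the inner orthogonal factor acting on $\vx$ (or $\vy$), and once to strip off the outer transposed factor. A preliminary observation makes the second use legitimate: for any orthogonal matrix $\vQ$, one has $\vQ\vw = \vw$ if and only if $\vQ^\top\vw = \vw$ (multiply $\vQ\vw = \vw$ by $\vQ^\top$ and use $\vQ^\top\vQ = \vI$). Thus \eqref{eq:PhiPsi-cond} also says that $\vPhi^\top$ fixes $\vU^\top\cK_{2s}$ and $\vPsi^\top$ fixes $\vV^\top\cJ_{2s}$, which is what I will need for the outer factors.

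First I would handle the inner factors. Since $\cK_{2s-1}\subseteq\cK_{2s}$ and $\cJ_{2s-1}\subseteq\cJ_{2s}$ by \eqref{eq:KJexpan} (valid because $2s\le k\le 2k-1$), any $\vx\in\vU^\top\cK_{2s-1}$ lies in $\vU^\top\cK_{2s}$, so $\vPhi\vx=\vx$; likewise $\vPsi\vy=\vy$ for $\vy\in\vV^\top\cJ_{2s-1}$. Expanding, $\tilde\vU^\top\vH\tilde\vU\vx=\vPhi^\top\vU^\top\vH\vU\vPhi\vx$ collapses to $\vPhi^\top\vU^\top\vH\vU\vx$, and similarly $\tilde\vV^\top\vLam\tilde\vU\vx=\vPsi^\top\vV^\top\vLam\vU\vx$ and $\tilde\vU^\top\vLam^\top\tilde\vV\vy=\vPhi^\top\vU^\top\vLam^\top\vV\vy$. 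What remains in each case is to verify that the vector immediately to the right of the leading transposed factor already lies in the subspace that factor fixes.

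For these three memberships I would write $\vx=\vU^\top\vw$ with $\vw\in\cK_{2s-1}$ and $\vy=\vV^\top\vz$ with $\vz\in\cJ_{2s-1}$, and push the operators through using the Krylov identities. For the first identity, $\vU^\top\vH\vU\vx=\vU^\top\vH\vw$ with $\vH\vw\in\vH\cK_{2s-1}\subseteq\cK_{2s}$ by hypothesis, so $\vU^\top\vH\vU\vx\in\vU^\top\cK_{2s}$ is fixed by $\vPhi^\top$. For the second, $\vV^\top\vLam\vU\vx=\vV^\top\vLam\vw$ with $\vLam\vw\in\vLam\cK_{2s-1}\subseteq\cJ_{2s}$ by \eqref{eq:AonKJ}, so $\vV^\top\vLam\vU\vx\in\vV^\top\cJ_{2s}$ is fixed by $\vPsi^\top$. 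For the third, $\vU^\top\vLam^\top\vV\vy=\vU^\top\vLam^\top\vz$ with $\vLam^\top\vz\in\vLam^\top\cJ_{2s-1}=\cK_{2s-1}\subseteq\cK_{2s}$ by \eqref{eq:AonKJ}, so $\vU^\top\vLam^\top\vV\vy\in\vU^\top\cK_{2s}$ is fixed by $\vPhi^\top$. Applying the fixing property in each case removes the outer factor and yields the three claimed equalities.

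The argument is essentially bookkeeping once the two-sided use of orthogonality is set up, so I do not anticipate a genuine obstacle. The only point demanding care is tracking which subspace ($\cK$ versus $\cJ$) and which orthogonal matrix ($\vPhi$ versus $\vPsi$) is relevant in each identity, and making sure the index shift $2s-1\to 2s$ is supplied by the correct containment: the strict inclusions of \eqref{eq:KJexpan} absorb the inner factor, while the operator mappings of \eqref{eq:AonKJ} together with $\vH\cK_{2s-1}\subseteq\cK_{2s}$ produce the membership needed for the outer factor.
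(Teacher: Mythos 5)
Your proof is correct and takes essentially the same route as the paper's: both absorb the inner factors $\vPhi\vx=\vx$, $\vPsi\vy=\vy$ via the inclusion $\cK_{2s-1}\subseteq\cK_{2s}$ (and $\cJ_{2s-1}\subseteq\cJ_{2s}$) from \eqref{eq:KJexpan}, verify the three memberships $\vU^\top\vH\vU\vx\in\vU^\top\cK_{2s}$, $\vV^\top\vLam\vU\vx\in\vV^\top\cJ_{2s}$, $\vU^\top\vLam^\top\vV\vy\in\vU^\top\cK_{2s}$ using $\vH\cK_{2s-1}\subseteq\cK_{2s}$ and \eqref{eq:AonKJ}, and then strip the outer transposed factors by noting that an orthogonal matrix fixes a vector if and only if its transpose does. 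The only difference is the order of the bookkeeping (you collapse the inner factors before establishing the memberships, while the paper does the reverse), which does not change the argument.
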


\begin{proof}
Let $\vx\in \vU^\top \cK_{2s-1}$ and $\vy\in \vV^\top \cJ_{2s-1}$. Since $\vU$ and $\vV$ are orthogonal, it holds that $\vU \vx \in \cK_{2s-1}$ and $\vV\vy\in \cJ_{2s-1}$. Hence, from the assumption on $\vH$, the properties of $\cJ_i$ and $\cK_i$ in \eqref{eq:AonKJ} and \eqref{eq:KJexpan}, and noting $2s-1\le k-1$, we have
		\begin{align*}
		\vH\vU\vx \in\cK_{2s},\ \vLam\vU \vx\in \cJ_{2s},\text{ and }\vLam^\top\vV\vy\in \cK_{2s-1} \subsetneq \cK_{2s},
		\end{align*}
		which implies 
		$$\vU^\top\vH\vU\vx \in\vU^\top\cK_{2s},\ \vV^\top \vLam\vU \vx\in \vV^\top \cJ_{2s},\text{ and }\vU^\top \vLam^\top\vV \vy\in \vU^\top \cK_{2s}.$$ From \eqref{eq:PhiPsi-cond}, we obtain 
$$\vPhi\vU^\top\vH\vU\vx  = \vU^\top\vH\vU\vx,\ \vPsi  \vV^\top \vLam\vU\vx = \vV^\top \vLam\vU\vx, \text{ and }\vPhi  \vU^\top \vLam^\top \vV \vy = \vU^\top \vLam^\top \vV \vy.$$
Because $\vPhi$ and $\vPsi$ are orthogonal matrix, the above equations indicate	that
		\begin{align}
		\label{eq:tmp2}
		\vPhi^\top\vU^\top\vH\vU\vx  = \vU^\top\vH\vU\vx,\ \vPsi^\top  \vV^\top \vLam\vU\vx = \vV^\top \vLam\vU\vx, \text{ and }\vPhi^\top  \vU^\top \vLam^\top \vV \vy = \vU^\top \vLam^\top \vV \vy.
		\end{align}
		
		Moreover, since $\vx\in \vU^\top \cK_{2s-1}$ and $\vy\in \vV^\top \cJ_{2s-1}$, it follows from \eqref{eq:KJexpan} that $\vx\in \vU^\top \cK_{2s}$ and $\vy\in\vV^\top \cJ_{2s}$, and thus using \eqref{eq:PhiPsi-cond} again and also the definition of $\tilde\vU$ and $\tilde\vV$, we have 
		\begin{align}
		\label{eq:tmp3}
		\tilde\vU\vx = \vU\vPhi \vx = \vU\vx, \text{ and }\tilde\vV\vy = \vV\vPsi \vy = \vV\vy.
		\end{align}
		Therefore, we conclude that for any $ \vx\in \vU^\top \cK_{2s-1}$ and $\vy\in \vV^\top \cJ_{2s-1}$,
		\begin{align*}
		& \tilde\vU^\top \vH\tilde\vU \vx \overset{\eqref{eq:tmp3}}= \vPhi^\top\vU^\top \vH\vU\vx \overset{\eqref{eq:tmp2}} = \vU^\top \vH\vU\vx,
		\\
		& \tilde\vV^\top \vLam\tilde\vU \vx \overset{\eqref{eq:tmp3}}= \vPsi^\top \vV^\top \vLam\vU \vx \overset{\eqref{eq:tmp2}}= \vV^\top \vLam\vU \vx,
		\\
		& \tilde\vU^\top \vLam^\top \tilde\vV \vy = \vPhi^\top \vU^\top \vLam^\top \vV \vy \overset{\eqref{eq:tmp2}}= \vU^\top \vLam^\top \vV \vy.
		\end{align*}
Hence, we complete the proof.
\end{proof}

\begin{lemma}\label{lem:krylov}
Given $m\le n$ and $k < \frac{m}{2}$, let $\vLam$ and $\vc$ be the matrix and vector in \eqref{eq:Lamc}, and let $\vh\in\cK_0$.
Suppose that $\vA$ and $\vb$ are respectively a multiple of $\vLam$ and $\vc$ and $\vH\in\SS_+^n$ satisfying $\vH\cK_{2s-1}\subseteq\cK_{2s}$ for all $s\le \frac k 2$. Assume $X$ and $Y$ are rotational invariant convex sets. 
Then for any first-order method $\cM$ described in \eqref{eq:I}, there exist orthogonal matrices $\vU\in\RR^{n\times n}$ and $\vV\in\RR^{m\times m}$ and a problem instance $P(\vtheta;\vU^\top \vH\vU,\vV^\top \vA\vU)$ with $\vtheta=(\vh,\vb,X,Y,0)$ such that $\vU\vh = \vh$, $\vV\vc=\vc$, and in addition, for any $0\le t \le \frac{k-4}{2}$, when $\cM$ is applied to solve the instance, the iterates $\{(\vx^{(i)},\vy^{(i)})\}_{i=0}^t$ satisfy
	\begin{align*}
	\vx^{(i)}\in \vU^\top \cK_{2t+1},\ \vy^{(i)}\in \vV^\top \cJ_{2t+1},\ \forall\, i=0,\ldots,t,
	\end{align*} 
	where $\cK_{2t+1}$ and $\cJ_{2t+1}$ are the Krylov spaces defined in \eqref{eq:KJ}.
\end{lemma}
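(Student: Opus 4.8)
The plan is to build the orthogonal matrices $\vU$ and $\vV$ \emph{adaptively}, running the (deterministic) method $\cM$ one iteration at a time and using the rotational invariance of Lemma~\ref{lem:PhiPsi-cond} to argue that each refinement of the pair $(\vU,\vV)$ leaves the already-computed iterates untouched. Concretely, I would prove by induction on $t$ the statement: there exist orthogonal $\vU,\vV$ with $\vU\vh=\vh$ and $\vV\vc=\vc$ such that, when $\cM$ is run in the form \eqref{eq:I_QP} on $P(\vtheta;\vU^\top\vH\vU,\vV^\top\vA\vU)$, the iterates satisfy $\vx^{(i)}\in\vU^\top\cK_{2i+1}$ and $\vy^{(i)}\in\vV^\top\cJ_{2i+1}$ for all $0\le i\le t$. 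Because $\cK_{2i+1}\subseteq\cK_{2t+1}$, the binding inclusions occur at $i=t$, so the pair $(\vU,\vV)$ produced after running through $t=0,1,\dots,\lfloor(k-4)/2\rfloor$ yields the claimed instance. The base case $t=0$ is immediate with the normalization $\vx^{(0)}=\vy^{(0)}=\vzero$ and $\vU=\vV=\vI$, since $\vzero$ lies in every subspace and $\vI$ fixes $\vh,\vc$. Throughout, the rotational invariance of $X$ and $Y$ (so that $\vU X=X$ and $\vV Y=Y$) guarantees that the rotated data again define an instance in the class of Definition~\ref{def:ptheta-HA} with the same $\vtheta$.

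For the inductive step, assume the statement at stage $t$ for some $(\vU,\vV)$, and set $s=t+1$; note $s\le k/2$ since $t\le(k-4)/2$, and $\cK_{2t+2}\subsetneq\cK_{2t+3}$, $\cJ_{2t+2}\subsetneq\cJ_{2t+3}$ by \eqref{eq:KJexpan} (as $2t+3\le 2k-1$). The iterate $\vx^{(t+1)}$ produced by $\cM$ depends only on the oracle responses $\vU^\top\vH\vU\vx^{(i)}$, $\vV^\top\vA\vU\vx^{(i)}$, and $\vU^\top\vA^\top\vV\vy^{(i)}$ at the committed query points, which lie in $\vU^\top\cK_{2t+1}$ and $\vV^\top\cJ_{2t+1}$ by hypothesis. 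I would then set $\tilde\vU=\vU\vPhi$ and $\tilde\vV=\vV\vPsi$ for orthogonal $\vPhi,\vPsi$ fixing $\vU^\top\cK_{2t+2}$ and $\vV^\top\cJ_{2t+2}$ pointwise. Lemma~\ref{lem:PhiPsi-cond} (with this $s$) then says every oracle output at the committed iterates is unchanged upon replacing $(\vU,\vV)$ by $(\tilde\vU,\tilde\vV)$; hence $\cM$ reproduces exactly the same $\vx^{(0)},\dots,\vx^{(t)}$ and $\vy^{(0)},\dots,\vy^{(t)}$, and the next query $(\vx^{(t+1)},\vy^{(t+1)})$ becomes a \emph{fixed} vector, independent of the particular $\vPhi,\vPsi$ among those fixing the stated subspaces. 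Moreover, since $\vPhi^\top$ and $\vPsi^\top$ also fix $\vU^\top\cK_{2i+1}$ and $\vV^\top\cJ_{2i+1}$ for $i\le t$, one has $\tilde\vU^\top\cK_{2i+1}=\vU^\top\cK_{2i+1}$ (and likewise on the $\cJ$-side), so the earlier inclusions survive the refinement; the same observation, together with $\vh\in\cK_0$ and $\vc\in\cJ_0$, gives $\tilde\vU\vh=\vh$ and $\tilde\vV\vc=\vc$.

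It remains only to choose $\vPhi,\vPsi$ so that the now-fixed new iterate is pushed to the next Krylov level, i.e.\ $\vPhi\vx^{(t+1)}\in\vU^\top\cK_{2t+3}$ and $\vPsi\vy^{(t+1)}\in\vV^\top\cJ_{2t+3}$; this is precisely Lemma~\ref{lem:ortho_map} applied twice, with $\cX=\vU^\top\cK_{2t+2}\subsetneq\bar\cX=\vU^\top\cK_{2t+3}$ and $\bar\vx=\vx^{(t+1)}$ on the primal side, and analogously on the dual side. Unwinding $\tilde\vU^\top\cK_{2t+3}=\vPhi^\top\vU^\top\cK_{2t+3}$ yields $\vx^{(t+1)}\in\tilde\vU^\top\cK_{2(t+1)+1}$ and $\vy^{(t+1)}\in\tilde\vV^\top\cJ_{2(t+1)+1}$, which closes the induction with $(\tilde\vU,\tilde\vV)$. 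I expect the main obstacle to lie in the bookkeeping of this decoupling argument: one must verify carefully that passing from $(\vU,\vV)$ to the refined pair perturbs \emph{none} of the three oracle outputs at \emph{all} previously queried points (so $\cM$ genuinely cannot detect the change and hence commits to the same iterates), and that every index used, namely $2s-1$, $2s=2t+2$, and $2t+3\le 2k-1$, stays within the regime where Lemma~\ref{lem:PhiPsi-cond}, Lemma~\ref{lem:ortho_map}, the properties \eqref{eq:AonKJ}, and the strict inclusions \eqref{eq:KJexpan} of the Krylov spaces in \eqref{eq:KJ} remain valid.
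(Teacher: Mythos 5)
Your inductive construction is essentially the paper's own proof: refine $(\vU,\vV)$ one stage at a time via $\tilde\vU=\vU\vPhi$, $\tilde\vV=\vV\vPsi$, with $\vPhi,\vPsi$ supplied by Lemma~\ref{lem:ortho_map} (fixing $\vU^\top\cK_{2t+2}$ and $\vV^\top\cJ_{2t+2}$ pointwise while pushing the new iterate to the next Krylov level), then invoke Lemma~\ref{lem:PhiPsi-cond} with $s=t+1\le\frac k2$ to conclude that all three oracle outputs at every committed query point are unchanged, so the deterministic $\cM$ reproduces the same iterates and the new query is a fixed vector that $\vPhi,\vPsi$ may be chosen against without circularity. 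Your index bookkeeping ($2s-1=2t+1$, $2s=2t+2$, $2t+3\le 2k-1$ for \eqref{eq:KJexpan}) is correct, and your per-iterate invariant $\vx^{(i)}\in\vU^\top\cK_{2i+1}$ is a harmless strengthening of the paper's per-stage inclusion, preserved under refinement exactly as you argue since $\vPhi^\top$ fixes $\vU^\top\cK_{2t+2}$ pointwise.

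The one genuine gap is the base case. The initial point $(\vx^{(0)},\vy^{(0)})$ is part of the arbitrary method $\cM$ in \eqref{eq:I} and need not be $\vzero$, and the normalization you invoke is not available in this setting: the shifting argument that justifies $\vx^{(0)}=\vzero$ under Assumption~\ref{assum:linear_span} would replace $\vb$ by $\vb-\vA\vx^{(0)}$ and alter $\vh$ and the sets $X,Y$, destroying precisely the structure this lemma needs ($\vh\in\cK_0$, $\vb$ a multiple of $\vc$, origin-centered rotationally invariant $X$ and $Y$). Since the lemma must hold for every initialization, your stage-$0$ claim with $\vU=\vV=\vI$ simply fails whenever $\vx^{(0)}\notin\cK_1$. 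The repair is one more application of your own machinery, and it is what the paper does: because $(\vx^{(0)},\vy^{(0)})$ is fixed by $\cM$ before any oracle call and is therefore instance-independent, apply Lemma~\ref{lem:ortho_map} at stage $0$ with $\cX=\cK_0\subsetneq\bar\cX=\cK_1$ (and $\cJ_0\subsetneq\cJ_1$ on the dual side, both strict by \eqref{eq:KJexpan}) to obtain orthogonal $\vU_0,\vV_0$ satisfying $\vU_0\vx=\vx$ on $\cK_0$, $\vU_0\vx^{(0)}\in\cK_1$, $\vV_0\vy=\vy$ on $\cJ_0$, and $\vV_0\vy^{(0)}\in\cJ_1$; then $\vU_0\vh=\vh$ and $\vV_0\vc=\vc$ follow from $\vh\in\cK_0$ and $\vc\in\cJ_0$, and $\vx^{(0)}\in\vU_0^\top\cK_1$, $\vy^{(0)}\in\vV_0^\top\cJ_1$ is exactly your invariant at $t=0$. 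With this base case, your induction goes through verbatim.
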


\begin{proof} 
Note $\cK_0\subsetneq\cK_1$ and $\cJ_0\subsetneq\cJ_1$ from Lemma \ref{lem:krylov-JK}. Hence, by Lemma \ref{lem:ortho_map} there exist orthogonal matrices $\vU_0$ and $\vV_0$ such that 
\begin{align*}
	\vU_0\vx = \vx, \forall \vx\in\cK_0,\text{ and }\vU_0\vx^{(0)} \in \cK_1
	\\
	\vV_0\vy = \vy, \forall \vy\in\cJ_0,\text{ and }\vV_0\vy^{(0)} \in \cJ_1.
\end{align*} 
Therefore, from the condition $\vh\in\cK_0$ and $\vc\in\cJ_0$ by Lemma \ref{lem:krylov-JK}, we have $\vU_0\vh = \vh$ and $\vV_0\vc = \vc$. Consequently, the results in the lemma hold for $t=0$.
Below we prove the results for any $t < \frac{k-4}{2}$ by induction. 

Assume that for some $1\le s< \frac{k-4}{2}$, the results hold for $t=s-1$, namely, there exist orthogonal matrices $\vU_{s-1}\in\RR^{n\times n}$ and $\vV_{s-1}\in\RR^{m\times m}$ such that $\vU_{s-1}\vh = \vh$, $\vV_{s-1}\vc=\vc$, and when $\cM$ is applied to the instance $P(\vtheta;\vU_{s-1}^\top \vH\vU_{s-1},\vV_{s-1}^\top \vA\vU_{s-1})$, the iterates $\{(\vx^{(i)},\vy^{(i)})\}_{i=0}^{s-1}$ satisfy
		\begin{align}
		\label{eq:hypothesis}
		\vx^{(i)}\in \vU_{s-1}^\top \cK_{2s-1},\text{ and }\vy^{(i)}\in \vV_{s-1}^\top \cJ_{2s-1},\ \forall i=0,\ldots,s-1.
		\end{align}
		Suppose the next iterate obtained from $\cM$ is $(\vx^{(s)},\vy^{(s)})\in X\times Y$. 
		Since $s< \frac{k-4}{2}$, it holds that $2s < k$, and from \eqref{eq:KJexpan} we have $\vU_{s-1}^\top \cK_{2s-1}\subsetneq \vU_{s-1}^\top \cK_{2s}\subsetneq \vU_{s-1}^\top \cK_{2s+1}$ and $\vV_{s-1}^\top \cJ_{2s-1}\subsetneq \vV_{s-1}^\top \cJ_{2s}\subsetneq \vV_{s-1}^\top \cJ_{2s+1}$.
		By Lemma \ref{lem:ortho_map}, there exist orthogonal matrices $\vPhi\in\RR^{n\times n}$ and $\vPsi\in\RR^{m\times m}$ such that 
		\begin{align}
		\label{eq:PhiPsi}
		\begin{aligned}
		&\vPhi \vx = \vx, \ \forall \vx\in \vU_{s-1}^\top \cK_{2s},\text{ and } \vPhi \vx^{(s)} \in \vU_{s-1}^\top \cK_{2s+1}, \\
		&\vPsi \vy = \vy,\ \forall \vy\in \vV_{s-1}^\top \cJ_{2s},\text{ and } \vPsi \vy^{(s)} \in \vV_{s-1}^\top \cJ_{2s+1}.
		\end{aligned}
		\end{align}
		Since $\vc\in\cJ_{2s}$ and $\vV_{s-1}\vc=\vc$, we have $\vc\in \vV_{s-1}^\top \cJ_{2s}$, and thus it follows from \eqref{eq:PhiPsi} that $\vPsi \vc=\vc$. 
		Let $\vU_s = \vU_{s-1}\vPhi$ and $\vV_s = \vV_{s-1}\vPsi$. Clearly, both $\vU_s$ and $\vV_s$ are orthogonal matrices, and because $\vV_{s-1}\vc=\vc$ and $\vPsi \vc=\vc$, we have $\vV_s\vc=\vc$. By a similar argument we also have $\vU_s\vh = \vh$. In addition, from \eqref{eq:PhiPsi}, Lemma \ref{lem:PhiPsi-cond}, and the assumptions on $\vH$ and $\vA$, it follows that for any $\vx\in \vU_{s-1}^\top \cK_{2s-1}$ and $\vy\in \vV_{s-1}^\top \cJ_{2s-1}$,
		\begin{align}
		\label{eq:QAinfo_same}
		\begin{aligned}
		& \vU_s^\top \vH\vU_s \vx = \vU_{s-1}^\top \vH\vU_{s-1}\vx,\ \vV_s^\top \vA\vU_s \vx = \vV_{s-1}^\top \vA\vU_{s-1} \vx, \text{ and }\vU_s^\top \vA^\top \vV_s \vy = \vU_{s-1}^\top \vA^\top \vV_{s-1} \vy.
		\end{aligned}
		\end{align}

Therefore, from the induction hypothesis \eqref{eq:hypothesis} and the equations in \eqref{eq:QAinfo_same}, we conclude that the first $s+1$ iterates obtained from $\cM$ applied to $P(\vtheta;\vU_s^\top \vH\vU_s,\vV_s^\top \vA\vU_s)$ are exactly the same as the first $s+1$ iterates obtained from $\cM$ applied to $P(\vtheta;\vU_{s-1}^\top \vH\vU_{s-1}\vV_{s-1}^\top \vA\vU_{s-1})$, because exactly the same information is used to generate those iterates (cf. \eqref{eq:I_QP}). Consequently, when $\cM$ is applied to $P(\vtheta;\vU_s^\top \vH\vU_s,\vV_s^\top \vA\vU_s)$, the first $s+1$ iterates are $(\vx^{(i)},\vy^{(i)}), i=0,1,\ldots,s$. Hence, from \eqref{eq:KJexpan}, \eqref{eq:hypothesis} and \eqref{eq:PhiPsi}, and also the facts $\vU_s = \vU_{s-1}\vPhi$ and $\vV_s = \vV_{s-1}\vPsi$, we have 
$$\vx^{(i)}\in \vU_s^\top \cK_{2s+1},\ \vy^{(i)}\in \vV_s^\top \cJ_{2s+1},\ \forall i=0,\ldots,s.$$
This finishes the induction and completes the proof.
\end{proof}

From Lemma \ref{lem:krylov}, we perform another rotation to the instance and can further show that the generated approximate solution falls into a rotated Krylov subspace.
\begin{proposition}
	\label{pro:krylov}
Given $m\le n$ and $k < \frac{m}{2}$, let $\vLam$ and $\vc$ be the matrix and vector in \eqref{eq:Lamc}, and let $\vh\in\cK_0$. 
Suppose that $\vA$ and $\vb$ are respectively a multiple of $\vLam$ and $\vc$ and $\vH\in\SS_+^n$ satisfying $\vH\cK_{2s-1}\subseteq\cK_{2s}$ for all $s\le \frac k 2$. Assume $X$ and $Y$ are rotational invariant convex sets. Then for any first-order method $\cM$ described in \eqref{eq:I}, there exist orthogonal matrices $\vU\in\RR^{n\times n}$ and $\vV\in\RR^{m\times m}$ and a problem instance $P(\vtheta;\vU^\top \vH\vU,\vV^\top \vA\vU)$ with $\vtheta=(\vh,\vb,X,Y,0)$ such that $\vU\vh = \vh$, $\vV\vc=\vc$, and in addition, for any $0\le t \le \frac{k-4}{2}$, when $\cM$ is applied to this instance, the iterates $\{(\vx^{(i)},\vy^{(i)})\}_{i=0}^t$ satisfy
$$\vx^{(i)}\in \vU^\top \cK_{2t+3},\ \vy^{(i)}\in \vV^\top \cJ_{2t+3},\ \forall\, i=0,\ldots,t,$$
	and the output $\xu^{(t)}\in \vU^\top \cK_{2t+3}$.
\end{proposition}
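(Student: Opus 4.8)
The plan is to get the inquiry-point control for free from Lemma~\ref{lem:krylov} and then capture the reported output $\xu^{(t)}$ by performing exactly one additional rotation, treating $\xu^{(t)}$ as though it were one more quantity to be relocated into a slightly larger Krylov subspace.

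First I would apply Lemma~\ref{lem:krylov} with the index $t$, producing orthogonal matrices $\vU'\in\RR^{n\times n}$ and $\vV'\in\RR^{m\times m}$ with $\vU'\vh=\vh$, $\vV'\vc=\vc$, such that when $\cM$ is run on $P(\vtheta;\vU'^\top\vH\vU',\vV'^\top\vA\vU')$ the inquiry iterates satisfy $\vx^{(i)}\in\vU'^\top\cK_{2t+1}$ and $\vy^{(i)}\in\vV'^\top\cJ_{2t+1}$ for $i=0,\dots,t$. This already settles the inquiry-point part, since $\cK_{2t+1}\subseteq\cK_{2t+3}$ and $\cJ_{2t+1}\subseteq\cJ_{2t+3}$ by \eqref{eq:KJexpan}; the hypothesis $t\le\frac{k-4}{2}$ guarantees $2t+3\le k-1$, so all relevant strict inclusions $\cK_{2t+1}\subsetneq\cK_{2t+2}\subsetneq\cK_{2t+3}$ are available.

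The main work is relocating the output. Because $\cK_{2t+2}\subsetneq\cK_{2t+3}$, I would apply Lemma~\ref{lem:ortho_map} to the pair $\vU'^\top\cK_{2t+2}\subsetneq\vU'^\top\cK_{2t+3}$ and the vector $\xu^{(t)}$, obtaining an orthogonal $\vPhi$ that fixes $\vU'^\top\cK_{2t+2}$ pointwise and sends $\vPhi\xu^{(t)}\in\vU'^\top\cK_{2t+3}$. Setting $\vPsi=\vI$, $\vU=\vU'\vPhi$ and $\vV=\vV'$, I would invoke Lemma~\ref{lem:PhiPsi-cond} with $s=t+1$, whose hypotheses hold: $\vH\cK_{2t+1}\subseteq\cK_{2t+2}$ by assumption (as $t+1\le\frac k2$), $\vPhi$ fixes $\vU'^\top\cK_{2t+2}$, $\vPsi=\vI$ fixes $\vV'^\top\cJ_{2t+2}$, and $\vA$ is a multiple of $\vLam$. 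The lemma shows that $\vH$-products, $\vA$-products and $\vA^\top$-products agree on $\vU'^\top\cK_{2t+1}$ and $\vV'^\top\cJ_{2t+1}$ for the two instances built from $(\vU',\vV')$ and $(\vU,\vV)$. Since the inquiry points lie in exactly those subspaces, $\cM$ reads identical first-order information at every step (cf.\ \eqref{eq:I_QP}) and therefore reproduces the same iterates and the same output $\xu^{(t)}$ on $P(\vtheta;\vU^\top\vH\vU,\vV^\top\vA\vU)$.

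It then remains to read off the memberships for the final matrices. As $\vPhi$ fixes $\vU'^\top\cK_{2t+2}\supseteq\vU'^\top\cK_{2t+1}$, for each $i$ I get $\vU\vx^{(i)}=\vU'\vPhi\vx^{(i)}=\vU'\vx^{(i)}\in\cK_{2t+1}\subseteq\cK_{2t+3}$ and likewise $\vV\vy^{(i)}=\vV'\vy^{(i)}\in\cJ_{2t+1}\subseteq\cJ_{2t+3}$, giving $\vx^{(i)}\in\vU^\top\cK_{2t+3}$ and $\vy^{(i)}\in\vV^\top\cJ_{2t+3}$; the choice of $\vPhi$ gives $\vU\xu^{(t)}=\vU'\vPhi\xu^{(t)}\in\cK_{2t+3}$, i.e.\ $\xu^{(t)}\in\vU^\top\cK_{2t+3}$. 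Finally $\vU\vh=\vU'\vPhi\vh=\vU'\vh=\vh$ (using $\vh\in\cK_0\subseteq\cK_{2t+2}$, so $\vh\in\vU'^\top\cK_{2t+2}$) and $\vV\vc=\vV'\vc=\vc$. The one delicate point is the index bookkeeping: taking $s=t+1$ is precisely what freezes the oracle information on the $\cK_{2t+1}$-iterates while still leaving the rotation free to act in the direction orthogonal to $\cK_{2t+2}$ needed to push $\xu^{(t)}$ into $\cK_{2t+3}$; landing it in $\cK_{2t+2}$ would be impossible since $\vPhi$ must fix $\cK_{2t+2}$ pointwise, which is exactly why the claim has the ``$+2$'' jump from $\cK_{2t+1}$ to $\cK_{2t+3}$.
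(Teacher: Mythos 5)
Your proof is correct and takes essentially the same route as the paper's: after Lemma~\ref{lem:krylov} controls the inquiry points, one further application of Lemma~\ref{lem:ortho_map} fixing $\vU'^\top\cK_{2t+2}$ pushes the output into $\vU^\top\cK_{2t+3}$, with Lemma~\ref{lem:PhiPsi-cond} invoked at $s=t+1$ (a step the paper compresses into ``by the same arguments as those in the proof of Lemma~\ref{lem:krylov}'') certifying that the oracle returns are unchanged, so $\cM$ reproduces the same run and the same output. Your only deviation is taking $\vPsi=\vI$, whereas the paper also rotates the dual side so that $\yu^{(t)}\in\vV^\top\cJ_{2t+3}$; since the proposition asserts nothing about $\yu^{(t)}$, this simplification is harmless.
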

	
	\begin{proof}	
	From Lemma \ref{lem:krylov}, we see that there are orthogonal matrices $\vU_t\in\RR^{n\times n}$ and $\vV_t\in\RR^{m\times m}$	and a problem instance $P(\vtheta;\vU_t^\top \vH\vU_t,\vV_t^\top \vA\vU_t)$, such that $\vU_t\vh = \vh$,  $\vV_t\vc=\vc$, and when $\cM$ is applied to this instance, the iterates $\{(\vx^{(i)},\vy^{(i)})\}_{i=0}^t$ satisfy
	\begin{align}
	\label{eq:krylov_xy-2}
	\vx^{(i)}\in \vU_t^\top \cK_{2t+1},\ \vy^{(i)}\in \vV_t^\top \cJ_{2t+1},\ \forall\, i=0,\ldots,t.
	\end{align}
				
		Since $t\le \frac{k-4}{2}$, it holds that $2t+3\le k-1$, and from \eqref{eq:KJexpan} we have $\cK_{2t+1}\subsetneq \cK_{2t+2}\subsetneq \cK_{2t+3}$ and $\cJ_{2t+1}\subsetneq \cJ_{2t+2}\subsetneq \cJ_{2t+3}$. Hence, by Lemma \ref{lem:ortho_map}, there is an orthogonal matrices $\vPhi$ and $\vPsi$ such that
		\begin{align}
		\label{eq:tPhitPsi}
		\begin{aligned}
		&\vPhi \vx = \vx,\, \forall \vx\in \vU_{t}^\top \cK_{2t+2}, \quad \vPhi \xu^{(t)} \in \vU_{t}^\top \cK_{2t+3},\\
		&\vPsi \vy = \vy, \,\forall \vy\in \vV_{t}^\top \cJ_{2t+2}, \quad \vPsi \yu^{(t)} \in \vV_{t}^\top \cJ_{2t+3}.
		\end{aligned}
		\end{align}
		Let $\vU = \vU_t\vPhi$ and $\vV=\vV_t\vPsi$. By the same arguments as those in the proof of Lemma \ref{lem:krylov}, we have that $\vU\vh = \vh$, $\vV\vc = \vc$, and when $\cM$ is applied to $P(\vtheta; \vU^\top \vH\vU,\vV^\top \vA\vU)$, the first $t+1$ iterates are exactly $(\vx^{(i)},\vy^{(i)}),\, i=0,1,\ldots,t$. The desired results now follow from \eqref{eq:krylov_xy-2} and \eqref{eq:tPhitPsi}. 
	\end{proof}

Using Proposition \ref{pro:krylov}, we are now ready to prove Proposition \ref{pro:linear_span_ECO}.

\vgap

\begin{proof}[ of Proposition \ref{pro:linear_span_ECO}]
First note that the original instance in Proposition \ref{pro:linear_span_ECO} is $P(\vtheta; \vH, \vA)$ with $\vtheta=(\vh,\vb,\RR^n,\RR^m,0)$. Second, the data $\vH,\vA,\vb$ and $\vh$ satisfy the conditions in Proposition \ref{pro:krylov}. Hence, we apply Proposition \ref{pro:krylov} to
obtain an instance $P(\vtheta; \vU^\top\vH\vU, \vV^\top\vA\vU)$ such that $\vU\vh = \vh$ and $\vV\vb = \vb$, where $\vU$ and $\vV$ are orthogonal matrices, and we have used the fact that $\vb$ is a multiple of $\vc$. In addition, note that the  instance $P(\vtheta; \vU^\top\vH\vU, \vV^\top\vA\vU)$ is 
	\begin{align}\label{eq:rate-instance-H}
		\min_{\vx\in\R^n}\frac{1}{2}\vx^\top \vU^\top\vH\vU\vx - \vh^\top \vx\ \st \vV^\top\vA\vU\vx = \vb,
	\end{align}
	which is exactly the rotated instance \eqref{eq:ECO_rotate} by the definition \eqref{eq:ECO_UV} and the relations $\vU\vh = \vh$ and $\vV\vb = \vb$. By the KKT conditions of the original instance \eqref{eq:ECO_H} and the rotated instance \eqref{eq:ECO_rotate} or \eqref{eq:rate-instance-H}, it is easy to show that the pair $(\vx^*,\vy^*)$ is a primal-dual solution to the original instance if and only if $(\hat\vx,\hat\vy)=(\vU^\top\vx^*,\vV^\top\vy^*)$ is a primal-dual solution to the rotated instance, and that $\tilde f^* = f^*$.
	
	It remains to prove the inequalities from \eqref{eq:linear_span_lb-1} through \eqref{eq:linear_span_lb-4}. By Proposition \ref{pro:krylov}, when $\cM$ is applied to the rotated instance, the approximate solution $\xu^{(t)}\in \vU^\top \cK_{2t+3}$, which indicates $\vU\xu^{(t)}\in\cK_{2t+3}$ by the orthogonality of $\vU$. Since $t\le \frac{k-4}{2}$, we have $2t+3\le k-1$, and thus from \eqref{eq:KJexpan}, it follows that $\vU\xu^{(t)}\in \cK_{2t+3}\subseteq \cK_{k-1}$. Therefore, from \eqref{eq:ECO_UV} and the relations $\vU\vh = \vh$ and $\vV\vb = \vb$ we have
	\begin{align*}
	\begin{aligned}
	& \big|\tilde f(\xu^{(t)}) - \tilde f^*\big| = \big|f(\vU\xu^{(t)}) - f^*\big| \ge \min_{\vx\in\cK_{k-1}} \big|f(\vx) - f^*\big|, 
	\\
	& \tilde f(\xu^{(t)}) - \tilde f^* = f(\vU\xu^{(t)}) - f^*  \ge \min_{\vx\in\cK_{k-1}} f(\vx) - f^*,
	\\
	& \|\tilde\vA\xu^{(t)} - \vb\| = \|\vA(\vU\xu^{(t)}) - \vb\| \ge \min_{\vx\in\cK_{k-1}}\|\vA\vx - \vb\|,
	\\
	& \|\xu^{(t)} - \hat\vx\|^2 = \|\vU\xu^{(t)} - \vx^*\|^2 \ge \min_{\vx\in\cK_{k-1}}\|\vx - \vx^*\|^2,
	\end{aligned}
	\end{align*}
and we complete the proof.	
\end{proof}

\section{Lower complexity bounds on bilinear saddle-point problems}\label{sec:lb-spp}
In this section, we derive lower complexity bounds of first-order methods on solving the bilinear saddle-point problem \eqref{eq:saddle-prob} through considering its associated primal problem \eqref{eq:SPP}. As we mentioned in the beginning, the affinely constrained problem \eqref{eq:ECO} is a special case of \eqref{eq:SPP} if $Y=\RR^m$ and $g=0$. Hence, the results obtained in previous sections also apply to \eqref{eq:SPP}, namely, our designed instances of \eqref{eq:ECO} are also ``hard'' instances of \eqref{eq:SPP}. However, if we require $Y$ to be a compact set, those instances will not satisfy. On solving \eqref{eq:saddle-prob} with both $X$ and $Y$ being compact, \cite{nesterov2005smooth} gives a first-order method that can be described as \eqref{eq:I},  and it proves 
\begin{equation}\label{eq:rate-nesterov-spp}
0\le \phi(\bar{\vx}^{(t)})-\psi(\bar{\vy}^{(t)}) \le \frac{4L_f D_X^2}{(t+1)^2}+\frac{4\|\vA\|_{1,2}D_XD_Y}{t+1},
\end{equation}
where $D_X$ and $D_Y$ are the diameters of $X$ and $Y$ respectively, and
$$\|\vA\|_{1,2}:=\max_{\|\vx\|_1=1,\|\vy\|=1} \langle \vA\vx,\vy\rangle.$$
It is an open question if the convergence rate in \eqref{eq:rate-nesterov-spp} can still be improved. We give instances below to show a lower complexity bound, which is in the same form as that in \eqref{eq:rate-nesterov-spp} and differs only at the constants, and thus the result in \cite{nesterov2005smooth} is optimal. The ingredients in the designed ``hard'' SPP instances are the same as those used in section \ref{sec:linear_span_case}.

\subsection{A special class of SPP instances and a key proposition}
Similar to the previous two sections that focus on quadratic programs, we consider the SPP \eqref{eq:saddle-prob} with $f$ being a convex quadratic function and $g=0$, and we derive the lower complexity bounds through the primal problem \eqref{eq:SPP}. More precisely, let $m\le n$ and $k<\frac m 2$ be positive integers. We build instances of \eqref{eq:SPP} that are in the form of
\begin{align}
\label{eq:NCOspecial}
\phi^*:=\min_{\vx\in X}\Set{\phi(\vx):=f(\vx) + \max_{\vy\in Y}\langle \vb - \vA\vx, \vy\rangle},
\end{align}
where $X$ and $Y$ are compact Euclidean balls, $\vA$ and $\vb$ are those in \eqref{eq:Ab} with a certain $L_A\ge 0$, and $f(\vx)=\frac{1}{2}\vx^\top \vH\vx - \vh^\top\vx$ with $\vH\in\SS_+^n$. 

On affinely constrained problems, we first derive lower complexity bounds of first-order methods under linear span assumption in section \ref{sec:linear_span_case} for ease of the readers' understanding. The more technical derivation for general first-order methods is then given in section \ref{sec:ECO_general_case}. With those preparations, we now develop the lower complexity bounds directly on the general first-order methods described in \eqref{eq:I}. The following proposition is an extension of Proposition \ref{pro:linear_span_ECO} and serves as the main tool for our analysis throughout this section.

\begin{proposition}
	\label{pro:linear_span_NCO}
Let $m\le n$ and $k<\frac{m}{2}$ be positive integers, and let $L_f>0$ and $L_A\ge0$. Suppose that we have a problem instance of \eqref{eq:NCOspecial}, called \emph{original instance}, where $X$ and $Y$ are compact Euclidean balls, $\vH\in\SS_+^n$ and $\|\vH\|\le L_f$, $\vA$ and $\vb$ are those in \eqref{eq:Ab}. Moreover, assume that $\vH$ satisfies $\vH\cK_{2s-1}\subseteq\cK_{2s}$ for any $s\le \frac k 2$ and $\vh\in\cK_0$, where $\cK_{i}$ is defined in \eqref{eq:KJ}.
	Then for any first-order method $\cM$ that is described by \eqref{eq:I}, there exists one other problem instance, called \emph{rotated instance},
	\begin{align}
	\label{eq:NCO_rotate}
	\tilde \phi^*:=\min_{\vx\in X}\Set{\tilde \phi(\vx):=\tilde f(\vx) + \max_{\vy\in Y}\langle \vb - \tilde\vA\vx, \vy\rangle},
	\end{align}
	such that $\|\tilde \vA\| = \|\vA\|$ and $\tilde f(\vx) =\frac{1}{2}\vx^\top\tilde\vH\vx -\vh^\top\vx$ with $\tilde\vH\in\SS_+^n$ and $\|\tilde\vH\| = \|\vH\|$. Specifically,
	\begin{align}
	\label{eq:NCO_UV}
	\tilde f(\vx) = f(\vU\vx)\text{ and }\tilde\vA = \vV^\top \vA\vU,
	\end{align}
	where $\vU$ and $\vV$ are certain orthogonal matrices such that $\vU\vh = \vh$ and $\vV\vb = \vb$. 
	In addition, 
	$(\vx^*,\vy^*)$ is a saddle point to the \emph{original instance} if and only if $(\hat\vx, \hat\vy):=(\vU^\top\vx^*,\vV^\top\vy^*)$ is a saddle point to the \emph{rotated instance}. 	
	Furthermore, when $\cM$ is applied to \eqref{eq:NCO_rotate}, for any $0\le t\le \frac{k-4}{2}$, its computed approximate solution $\xu^{(t)}$ satisfies
	\begin{align}
	\label{eq:linear_span_lb_NCO-1}
	& \tilde \phi(\xu^{(t)}) - \tilde \phi^* \ge \min_{\vx\in\cK_{k-1}} \phi(\vx) - \phi^* 
	\\
	& \|\xu^{(t)} - \hat\vx\|^2 \ge \min_{\vx\in\cK_{k-1}}\|\vx - \vx^*\|^2.
	\label{eq:linear_span_lb_NCO-2}
	\end{align}
	
\end{proposition}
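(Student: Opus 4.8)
The plan is to follow exactly the route used in the proof of Proposition \ref{pro:linear_span_ECO}, taking Proposition \ref{pro:krylov} as the engine and then accounting for the extra inner maximization $\max_{\vy\in Y}$ that is absent in the affinely constrained setting. First I would recognize the original instance \eqref{eq:NCOspecial} as $P(\vtheta;\vH,\vA)$ with $\vtheta=(\vh,\vb,X,Y,0)$. Since $X$ and $Y$ are Euclidean balls centered at the origin, they are rotational invariant, and together with the hypotheses $\vH\cK_{2s-1}\subseteq\cK_{2s}$, $\vh\in\cK_0$, and $\vA,\vb$ being multiples of $\vLam,\vc$ from \eqref{eq:Ab}, all assumptions of Proposition \ref{pro:krylov} are met. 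Applying that proposition yields orthogonal matrices $\vU,\vV$ with $\vU\vh=\vh$ and $\vV\vc=\vc$ (hence $\vV\vb=\vb$, as $\vb$ is a multiple of $\vc$), such that when $\cM$ is run on $P(\vtheta;\vU^\top\vH\vU,\vV^\top\vA\vU)$ the output satisfies $\xu^{(t)}\in\vU^\top\cK_{2t+3}$.

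Next I would set $\tilde\vH=\vU^\top\vH\vU$ and $\tilde\vA=\vV^\top\vA\vU$, so that the resulting instance is precisely \eqref{eq:NCO_rotate}. Because $\vU^\top\vh=\vh$ (orthogonality of $\vU$ and $\vU\vh=\vh$), a direct expansion gives $\tilde f(\vx)=\frac{1}{2}\vx^\top\tilde\vH\vx-\vh^\top\vx=f(\vU\vx)$, matching \eqref{eq:NCO_UV}, and orthogonality of $\vU,\vV$ immediately yields $\tilde\vH\in\SS_+^n$, $\|\tilde\vH\|=\|\vH\|$, and $\|\tilde\vA\|=\|\vA\|$. The key new computation is the identity $\tilde\phi(\vx)=\phi(\vU\vx)$ for all $\vx$: in the term $\max_{\vy\in Y}\langle\vb-\vV^\top\vA\vU\vx,\vy\rangle$ I would substitute $\vy=\vV^\top\vw$ and use $\vV Y=Y$ together with $\vV\vb=\vb$ to rewrite it as $\max_{\vw\in Y}\langle\vb-\vA\vU\vx,\vw\rangle$, which collapses $\tilde\phi(\vx)$ to $\phi(\vU\vx)$. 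Minimizing over $\vx\in X$ and using $\vU X=X$ then gives $\tilde\phi^*=\phi^*$.

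For the saddle-point correspondence I would apply the same change of variables to the bilinear saddle function: with $\hat\vx=\vU^\top\vx^*$ and $\hat\vy=\vV^\top\vy^*$, the identities $\langle\vV\vb,\vy^*\rangle=\langle\vb,\vy^*\rangle$ and $\langle\vV^\top\vA\vx^*,\vV^\top\vy^*\rangle=\langle\vA\vx^*,\vy^*\rangle$ show that the saddle value is invariant under $(\vx,\vy)\mapsto(\vU\vx,\vV\vy)$; since $\vU X=X$ and $\vV Y=Y$ preserve the feasible sets, $(\vx^*,\vy^*)$ is a saddle point of the original instance if and only if $(\hat\vx,\hat\vy)$ is one of the rotated instance. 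Finally, the two inequalities follow quickly. For \eqref{eq:linear_span_lb_NCO-1}, the bound $t\le\frac{k-4}{2}$ gives $2t+3\le k-1$, hence $\vU\xu^{(t)}\in\cK_{2t+3}\subseteq\cK_{k-1}$ by \eqref{eq:KJexpan}, and so $\tilde\phi(\xu^{(t)})-\tilde\phi^*=\phi(\vU\xu^{(t)})-\phi^*\ge\min_{\vx\in\cK_{k-1}}\phi(\vx)-\phi^*$; for \eqref{eq:linear_span_lb_NCO-2}, orthogonality of $\vU$ and $\vU\hat\vx=\vx^*$ give $\|\xu^{(t)}-\hat\vx\|^2=\|\vU\xu^{(t)}-\vx^*\|^2\ge\min_{\vx\in\cK_{k-1}}\|\vx-\vx^*\|^2$.

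The main obstacle is not any single hard computation but the correct handling of the inner maximization: the entire reduction hinges on the rotational invariance $\vV Y=Y$, which lets the rotation $\vV$ of the dual block be absorbed into the maximization so that $\phi$ is preserved exactly. Everything else is a faithful transcription of the affinely constrained argument, with Proposition \ref{pro:krylov} doing the heavy lifting of forcing the iterates into rotated Krylov subspaces.
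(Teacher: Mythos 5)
Your proposal is correct and follows essentially the same route as the paper's proof: invoke Proposition \ref{pro:krylov} to obtain orthogonal $\vU,\vV$ with $\vU\vh=\vh$, $\vV\vb=\vb$ and $\xu^{(t)}\in\vU^\top\cK_{2t+3}$, use the rotational invariance $\vU X=X$, $\vV Y=Y$ to get $\tilde\phi(\vx)=\phi(\vU\vx)$ and the saddle-point correspondence, then conclude via $2t+3\le k-1$ and \eqref{eq:KJexpan}; you merely spell out the substitution $\vy=\vV^\top\vw$ that the paper leaves as ``not difficult to show.'' The only cosmetic deviation is sign bookkeeping: to match Definition \ref{def:ptheta-HA} the paper encodes the rotated instance as $P(\vtheta;\vU^\top\vH\vU,-\vV^\top\vA\vU)$ with $\vtheta=(\vh,-\vb,X,Y,0)$, whereas you apply Proposition \ref{pro:krylov} with $(\vA,\vb)$ directly---immaterial, since the Krylov subspaces in \eqref{eq:KJ} are spans and hence invariant under sign flips of $\vLam$ and $\vc$.
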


\begin{proof}
For the given data, we apply Proposition \ref{pro:krylov} to obtain an instance $P(\vtheta; \vU^\top\vH\vU, -\vV^\top\vA\vU)$ with $\vtheta=(\vh, -\vb,X,Y,0)$ and orthogonal matrices $\vU$ and $\vV$ satisfying $\vU\vh=\vh$ and $\vV\vb=\vb$. Clearly, this gives the rotated instance in \eqref{eq:NCO_rotate}.
	
	Since $X$ and $Y$ are Euclidean balls, they are invariant under orthogonal transformation, and thus $\vU X=X$ and $\vV Y = Y$. By comparing \eqref{eq:NCOspecial} and \eqref{eq:NCO_rotate}, using \eqref{eq:NCO_UV} and the relations $\vU\vh = \vh$ and $\vV\vb = \vb$, it is not difficult to show that $(\vx^*,\vy^*)$ is a saddle point to the original instance if and only if $(\vU^\top\vx^*,\vV^\top\vy^*)$ is a saddle point to the rotated instance, and that $\tilde\phi^* = \phi^*$. 
	
	It remains to prove the inequalities in \eqref{eq:linear_span_lb_NCO-1} and \eqref{eq:linear_span_lb_NCO-2}. By Proposition \ref{pro:krylov}, when $\cM$ is applied to the rotated instance, the approximate solution $\xu^{(t)}\in \vU^\top \cK_{2t+3}$. Since $t\le \frac {k-4} {2}$, we have $2t+3\le k-1$, and hence from \eqref{eq:KJexpan}, it follows that $\vU\xu^{(t)}\in \cK_{2t+3}\subseteq \cK_{k-1}$. Therefore, we have the desired results from \eqref{eq:NCO_UV} and complete the proof.
\end{proof}

\subsection{Lower complexity bounds}
In this subsection, we construct ``hard'' instances of \eqref{eq:NCOspecial} and establish lower complexity bound results of first-order methods on solving \eqref{eq:saddle-prob}.  Let $m\le n$ and $k < \frac{m}{2}$ be positive integers, and let $L_f>0$ and $L_A\ge 0$. Our first instance is \eqref{eq:NCOspecial} with
$(\vH,\vh,\vA,\vb)$ given in \eqref{eq:HQh},  
$f(\vx)=\frac{1}{2}\vx^\top \vH\vx - \vh^\top\vx$
and
\begin{align}
\label{eq:XY_HQh}
X=\Set{\vx\in\RR^n|\|\vx\|^2\le R_X^2=k(2k+1)^2}, Y=\Set{\vy\in\RR^n|\|\vy\|^2\le R_Y^2={\textstyle\frac k 4}}.
\end{align} 
Clearly the above problem is a special instance of \eqref{eq:SPP}. In the following lemma, we give an optimal solution and the optimal objective value of the instance.
\begin{lemma}
	\label{lem:NCO_HQh}
	For \eqref{eq:NCOspecial} with
$(\vH,\vh,\vA,\vb)$ given in \eqref{eq:HQh} and $(X,Y)$ specified in \eqref{eq:XY_HQh}, it has an optimal solution $\vx^*$ given in \eqref{eq:sol-lam=0} and the optimal objective value
	$$
	\phi^* = -\left(\frac{L_f}{4} + \frac{L_A}{2\sqrt{2}}\right)k.
	$$
\end{lemma}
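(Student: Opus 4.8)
The plan is to reduce the saddle-point objective to an exact penalty form and then sandwich $\phi^*$ between matching upper and lower bounds, both equal to the value $f^*=-\left(\frac{L_f}{4}+\frac{L_A}{2\sqrt 2}\right)k$ already computed in Lemma \ref{lem:HQh} for the affinely constrained counterpart \eqref{eq:ECO_H}. Since $Y$ in \eqref{eq:XY_HQh} is the Euclidean ball of radius $R_Y=\sqrt k/2$, the inner maximization is explicit: for every $\vx$,
$$\max_{\vy\in Y}\langle\vb-\vA\vx,\vy\rangle = R_Y\|\vA\vx-\vb\|,$$
so that $\phi(\vx)=f(\vx)+R_Y\|\vA\vx-\vb\|$. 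The two directions of the argument will then rest, respectively, on the primal feasibility of $\vx^*$ from \eqref{eq:sol-lam=0} and on the fact that the associated multiplier $\vy^*$ happens to lie exactly on the boundary of $Y$.

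For the upper bound I would first verify that $\vx^*\in X$. By \eqref{eq:ECO_Hh0_nrm-x-lam=0} we have $\|\vx^*\|^2=\frac{k(2k+1)(4k+1)}{3}$, and comparing with $R_X^2=k(2k+1)^2$ from \eqref{eq:XY_HQh} reduces to the elementary inequality $\frac{4k+1}{3}\le 2k+1$, which holds for all $k\ge1$; hence $\vx^*$ is feasible. Since $\vA\vx^*=\vb$ by Lemma \ref{lem:HQh}, the penalty term vanishes and $\phi(\vx^*)=f(\vx^*)=f^*$, giving $\phi^*\le f^*$.

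The lower bound is the crux, and I expect its key observation to be the main point of the proof: the multiplier $\vy^*$ in \eqref{eq:ECO_HQh_sol-lam=0-y} satisfies $\|\vy^*\|=\sqrt k/2=R_Y$ by \eqref{eq:HQh_nrm-y-lam=0}, so $\vy^*\in Y$. Using this feasible choice in the inner maximization gives, for every $\vx\in X$,
$$\phi(\vx)\ge f(\vx)+\langle\vb-\vA\vx,\vy^*\rangle = f(\vx)-\langle\vA\vx-\vb,\vy^*\rangle=:\cL(\vx),$$
where $\cL$ is precisely the Lagrangian of the affinely constrained problem evaluated at the multiplier $\vy^*$. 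Because $\vH\in\SS_+^n$, $\cL$ is a convex quadratic in $\vx$, and the KKT stationarity established in the proof of Lemma \ref{lem:HQh} (namely $\nabla f(\vx^*)=\vA^\top\vy^*$) says exactly that $\nabla\cL(\vx^*)=\vzero$; hence $\vx^*$ is a global minimizer of $\cL$ over $\RR^n$. Therefore $\phi(\vx)\ge\cL(\vx)\ge\cL(\vx^*)=f(\vx^*)-\langle\vA\vx^*-\vb,\vy^*\rangle=f^*$ for all $\vx\in X$, so $\phi^*\ge f^*$. Combining the two bounds yields $\phi^*=f^*$, and since $\phi(\vx^*)=\phi^*$ the point $\vx^*$ is an optimal solution, which completes the argument.
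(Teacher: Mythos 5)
Your proof is correct, and it takes a genuinely more self-contained route than the paper's. The paper proves this lemma in essentially one step: it invokes the saddle-point optimality condition \eqref{eq:opt-cond-primal} (citing \cite{nemirovski2004prox}) and checks that the pair $(\vx^*,\vy^*)$, with $\vy^*$ from \eqref{eq:ECO_HQh_sol-lam=0-y}, satisfies it trivially---both inner products vanish because $\nabla f(\vx^*)=\vA^\top\vy^*$ and $\vA\vx^*=\vb$ by the proof of Lemma \ref{lem:HQh}---together with the same two norm computations you perform (via \eqref{eq:ECO_Hh0_nrm-x-lam=0} and \eqref{eq:HQh_nrm-y-lam=0}) to place $\vx^*\in X$ and $\vy^*\in Y$. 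Your sandwich argument uses exactly the same certificate but re-derives the sufficiency of that condition from scratch: the exact-penalty identity $\phi(\vx)=f(\vx)+R_Y\|\vA\vx-\vb\|$, feasibility of $\vx^*$ for the upper bound $\phi^*\le f^*$, and the Lagrangian minorant $\cL(\vx)=f(\vx)-\langle\vA\vx-\vb,\vy^*\rangle$, globally minimized at $\vx^*$ by convexity of $\vH\in\SS_+^n$ plus stationarity, for the lower bound $\phi^*\ge f^*$. What your version buys is independence from the cited variational-inequality machinery and a transparent explanation of why $R_Y=\sqrt{k}/2$ in \eqref{eq:XY_HQh} is exactly the critical radius ($\vy^*$ sits on the boundary of $Y$, so a smaller $Y$ would break the construction); what the paper's version buys is brevity and a statement of the condition \eqref{eq:opt-cond-primal} that it then reuses verbatim in Remark \ref{rmk:NCO_HQh_lb-complexity} and in the strongly convex SPP theorem. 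One cosmetic note: your lower bound only needs $\vy^*\in Y$, so the exact equality $\|\vy^*\|=R_Y$ is not needed by the argument itself, though it is indeed the reason $Y$ was sized this way; and your argument covers the edge case $L_A=0$ uniformly, since then $\vA=\vO$, $\vb=\vzero$, and the penalty term vanishes identically.
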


\begin{proof}
	The optimality condition (e.g., \cite{nemirovski2004prox}) for $\vx^*\in X$ to solve \eqref{eq:NCOspecial} is that there exists $\vy^*\in Y$ such that
	\begin{align}\label{eq:opt-cond-primal}
	\langle \nabla f(\vx^*) - \vA^\top \vy^*, \vx^*-\vx\rangle \le 0,\ \langle \vA\vx^*-\vb,\vy^*-\vy\rangle \le 0, \forall\, \vx\in X, \vy\in Y.
	\end{align}
Let $\vy^*$ be the vector given in \eqref{eq:ECO_HQh_sol-lam=0-y}. Note from the proof of Lemma \ref{lem:HQh}, it holds that $\nabla f(\vx^*)=\vH\vx^*-\vh=\vA^\top \vy^*$ and $\vA \vx^*-\vb=\vzero$. Hence, $(\vx^*,\vy^*)$ satisfies the above optimality condition. In addition, from \eqref{eq:ECO_Hh0_nrm-x-lam=0} and \eqref{eq:HQh_nrm-y-lam=0}, it follows that $\vx^*\in X$ and $\vy^*\in Y$. Therefore, $\vx^*$ is an optimal soluton, and it is straightforward to compute $\phi^*=\phi(\vx^*)= -(\frac{L_f}{4} + \frac{L_A}{2\sqrt{2}})k$. This completes the proof.
\end{proof}	

In the following lemma, we compute the minimum value of $\phi(\vx)$ over $\cK_{k-1}$. 

\begin{lemma}
	\label{lem:NCO_KkBound}
	Consider \eqref{eq:NCOspecial} with
$(\vH,\vh,\vA,\vb)$ given in \eqref{eq:HQh} and $(X,Y)$ specified in \eqref{eq:XY_HQh}. If $L_f\ge L_A$, then
	\begin{align}
	\label{eq:NCO_HQh_KkBound}
	\min_{\vx\in\cK_{k-1}}\phi(\vx) - \phi^* \ge \frac{L_fR_X^2}{16(2k+1)^2} + \left(\frac{\sqrt{2}+2}{4}\right)\frac{L_AR_XR_Y}{2k+1}.
	\end{align}
\end{lemma}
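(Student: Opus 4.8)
The plan is to turn the inner maximization into an explicit feasibility penalty and then reduce everything to quantities already computed for the affinely constrained instance. Because $Y=\{\vy:\|\vy\|\le R_Y\}$ is a Euclidean ball, the maximization in \eqref{eq:NCOspecial} is a support-function evaluation, so $\max_{\vy\in Y}\langle\vb-\vA\vx,\vy\rangle = R_Y\|\vA\vx-\vb\|$, and hence $\phi(\vx)=f(\vx)+R_Y\|\vA\vx-\vb\|$ on all of $\R^n$, in particular on $\cK_{k-1}$. This exhibits $\phi$ on $\cK_{k-1}$ as the sum of the quadratic $f$ already studied in Lemma \ref{lem:ECO_HQh_KkBounds} and a feasibility term whose size over $\cK_{k-1}$ was also bounded there.

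First I would apply the elementary inequality $\min_{\vx\in\cK_{k-1}}\phi(\vx)\ge \min_{\vx\in\cK_{k-1}}f(\vx)+R_Y\min_{\vx\in\cK_{k-1}}\|\vA\vx-\vb\|$, which splits the estimate into two pieces that the preceding lemmas already settle. The first minimum is given exactly by \eqref{eq:ECO_HQh_fmin_K}, and the second is bounded below by $\frac{\sqrt{k}\,L_A}{2}$ through \eqref{eq:ineq-Ax-b-lem-Hh0}. Since the optimal value satisfies $\phi^*=f^*=-\big(\frac{L_f}{4}+\frac{L_A}{2\sqrt{2}}\big)k$ by Lemma \ref{lem:NCO_HQh}, the $f$-part minus $\phi^*$ is precisely the quantity appearing in \eqref{eq:bd-min-f-diff}; invoking the hypothesis $L_f\ge L_A$ exactly as there lower-bounds it by $\frac{1}{8}\big(\frac{L_f}{2}+\sqrt{2}L_A\big)k$.

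Combining these, I obtain $\min_{\vx\in\cK_{k-1}}\phi(\vx)-\phi^*\ge \frac{L_f k}{16}+\frac{\sqrt{2}L_A k}{8}+R_Y\cdot\frac{\sqrt{k}\,L_A}{2}$. The final step is to rewrite this using the radii in \eqref{eq:XY_HQh}: from $R_X=\sqrt{k}(2k+1)$ and $R_Y=\frac{\sqrt{k}}{2}$ we get $R_X^2/(2k+1)^2=k$ and $\sqrt{k}\,R_Y=k/2$, so the first term becomes $\frac{L_fR_X^2}{16(2k+1)^2}$, while the two $L_A$-terms collapse to $\frac{(\sqrt{2}+2)L_A k}{8}=\big(\frac{\sqrt{2}+2}{4}\big)\frac{L_AR_XR_Y}{2k+1}$, which is exactly \eqref{eq:NCO_HQh_KkBound}.

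The derivation is essentially bookkeeping once the support-function identity is in place; the only point needing care is that the separation $\min(f+R_Y\|\cdot\|)\ge\min f+R_Y\min\|\cdot\|$ discards the coupling between the smooth and penalty terms, so I must verify that the constants in the stated bound are calibrated to this split, which they are — the resulting chain in fact meets \eqref{eq:NCO_HQh_KkBound} with equality. It is also worth noting that the argument covers $L_A=0$ as a degenerate case, where every $L_A$-term vanishes and the estimate reduces to the pure smooth bound $\frac{L_fR_X^2}{16(2k+1)^2}$.
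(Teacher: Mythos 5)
Your proposal is correct and takes essentially the same route as the paper's proof: the support-function identity $\phi(\vx)=f(\vx)+R_Y\|\vA\vx-\vb\|$, the bound \eqref{eq:ineq-Ax-b-lem-Hh0} for the feasibility term, the value \eqref{eq:ECO_HQh_fmin_K} for $\min_{\vx\in\cK_{k-1}}f(\vx)$, the hypothesis $L_f\ge L_A$ used exactly as in \eqref{eq:bd-min-f-diff}, and the final rewriting via $R_X^2=k(2k+1)^2$ and $R_Y=\sqrt{k}/2$. The only cosmetic difference is that you split the minimum explicitly as $\min f+R_Y\min\|\vA\vx-\vb\|$, whereas the paper bounds $f(\vx)$ and $\|\vA\vx-\vb\|$ pointwise for each $\vx\in\cK_{k-1}$ before minimizing; these are the same estimate.
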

\begin{proof}
Since $Y$ is an Euclidean ball, it holds $$\max_{\vy\in Y}\langle \vb-\vA\vx,\vy\rangle = R_Y\|\vA\vx - \vb\|,$$ and thus the objective of  \eqref{eq:NCOspecial} is
	\begin{align}
	\label{eq:tmp9}
	\phi(\vx) = f(\vx) + R_Y\|\vA\vx - \vb\|.
	\end{align}
	From the first inequality in \eqref{eq:ineq-Ax-b-lem-Hh0}, we have
	\begin{equation}
	\label{eq:tmp7}
	\|\vA\vx-\vb\|^2\ge \frac{k L_A^2}{4} \overset{\eqref{eq:XY_HQh}} = \frac{L_A^2R_X^2}{4(2k+1)^2}.
	\end{equation}	
	In addition, note that $f(\vx)$ here is exactly the same as that discussed in Lemma \ref{lem:ECO_HQh_KkBounds}, and thus by \eqref{eq:ECO_HQh_fmin_K} we have that for any $\vx\in\cK_{k-1}$, 
	\begin{align}
	\label{eq:tmp8}
	f(\vx) \ge -\frac{1}{8}\left(L_f + \sqrt{2}L_A + \frac{L_A^2}{2L_f}\right)k.
	\end{align}
	Applying \eqref{eq:tmp7} and \eqref{eq:tmp8} to \eqref{eq:tmp9}, and noting the value of $\phi^*$ in Lemma \ref{lem:NCO_HQh}, we have for any $\vx\in\cK_{k-1}$ that
	\begin{align*}
	\phi(\vx) - \phi^* \ge & 
	\frac{1}{8}\left(L_f + \sqrt{2} L_A - \frac{L_A^2}{2L_f}\right)k + \frac{L_AR_XR_Y}{2(2k+1)}
	\\
	\ge & \frac{1}{8}\left(\frac{L_f}{2} + \sqrt{2} L_A\right)k + \frac{L_AR_XR_Y}{2(2k+1)},
	\end{align*}
	where the second inequality follows from the fact $L_f\ge L_A$. Rewriting the terms $L_fk$ and $L_Ak$ as 
	\begin{align*}
		L_fk \overset{\eqref{eq:XY_HQh}} = \frac{L_fR_X^2}{(2k+1)^2},\text{ and }L_Ak\overset{\eqref{eq:XY_HQh}} = L_A\sqrt{k}\cdot\sqrt{k} = L_A\cdot\frac{R_X}{2k+1}\cdot 2R_Y = \frac{2L_AR_XR_Y}{2k+1},
	\end{align*}
	we conclude the desired result in \eqref{eq:NCO_HQh_KkBound}.	
\end{proof}

Using Proposition \ref{pro:linear_span_NCO} and Lemma \ref{lem:NCO_KkBound}, we are able to show a lower complexity bound of first-order methods on \eqref{eq:saddle-prob} as summarized in the following theorem.

\begin{theorem}[lower complexity bound for SPPs]
	\label{thm:NCO_HQh_lb-complexity}
	Let $8<m\le n$ and $t < \frac{m}{4}-2$ be positive integers, $L_f>0$, and $L_A\ge0$.  Assume $L_f\ge L_A$.  Then for any first-order method $\cM$ described in \eqref{eq:I} on solving \eqref{eq:saddle-prob}, there exists a problem instance of \eqref{eq:saddle-prob} such that $\nabla f$ is $L_f$-Lipschitz continuous, $\|\vA\|=L_A$, and $X$ and $Y$ are Euclidean balls with radii $R_X$ and $R_Y$ respectively. In addition, 
	\begin{align}
	\label{eq:NCO_HQh_obj-err} 
	\phi( \xu^{(t)} )-\psi(\yu^{(t)})\ge & ~ \frac{L_fR_X^2}{16(4t+9)^2} + \left(\frac{\sqrt{2}+2}{4}\right)\frac{L_AR_XR_Y}{4t+9},
	\end{align}
	where $\phi$ and $\psi$ are the associated primal and dual objective functions, and $(\xu^{(t)},\yu^{(t)})$ is the approximate solution output by $\cM$.
\end{theorem}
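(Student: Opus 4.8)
The plan is to follow the template of Theorem~\ref{thm:ECO_Hh0_general}, reducing the theorem to the two results already in hand: Proposition~\ref{pro:linear_span_NCO}, which moves the analysis onto a fixed Krylov subspace via an orthogonal rotation, and Lemma~\ref{lem:NCO_KkBound}, which bounds the primal gap over that subspace. The one feature distinguishing this from the constrained case is that the quantity to be bounded is a primal-dual gap $\phi(\xu^{(t)})-\psi(\yu^{(t)})$ rather than an objective or feasibility error, so I would dispose of the dual term by weak duality at the very start.

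First I would set $k := 2t+4$. From $t<\frac m4-2$ one gets $k<\frac m2$, so the Krylov construction of Section~\ref{sec:linear_span_case} is legitimate, and moreover $\frac{k-4}{2}=t$, placing $t$ exactly at the top of the admissible range $0\le t\le\frac{k-4}{2}$ demanded by Proposition~\ref{pro:linear_span_NCO}. I would take as the \emph{original instance} the problem \eqref{eq:NCOspecial} with $(\vH,\vh,\vA,\vb)$ as in \eqref{eq:HQh} and $(X,Y)$ as in \eqref{eq:XY_HQh}. Its data meet every hypothesis of Proposition~\ref{pro:linear_span_NCO}: $\vH\in\SS_+^n$ with $\|\vH\|\le L_f$, $\vh\in\cK_0$, and $\vH\cK_{2s-1}\subseteq\cK_{2s}$ for $s\le\frac k2$ --- precisely the facts established in Lemma~\ref{lem:ECO_HQh_xt-sp}, since $\vH$ and $\vh$ are identical there. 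Applying the proposition yields a \emph{rotated instance} \eqref{eq:NCO_rotate} of the same form, with $\|\tilde\vA\|=L_A$, with $\nabla\tilde f$ being $L_f$-Lipschitz, with $X,Y$ the same Euclidean balls of radii $R_X,R_Y$ from \eqref{eq:XY_HQh}, and with inequality \eqref{eq:linear_span_lb_NCO-1} available.

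The core of the argument is then a short chain. Writing $\phi,\psi$ for the primal and dual objectives of the rotated instance (these are the $\phi,\psi$ named in the statement), weak duality \eqref{eq:weak-duality} gives $\psi(\yu^{(t)})\le\psi^*\le\phi^*$, hence $\phi(\xu^{(t)})-\psi(\yu^{(t)})\ge\phi(\xu^{(t)})-\phi^*$. Inequality \eqref{eq:linear_span_lb_NCO-1}, whose right-hand side already refers to the original-instance value and whose derivation records $\tilde\phi^*=\phi^*$, bounds the latter below by $\min_{\vx\in\cK_{k-1}}\phi(\vx)-\phi^*$, and Lemma~\ref{lem:NCO_KkBound} --- whose hypothesis $L_f\ge L_A$ is assumed --- bounds that by $\frac{L_fR_X^2}{16(2k+1)^2}+\big(\frac{\sqrt2+2}{4}\big)\frac{L_AR_XR_Y}{2k+1}$. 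Substituting $2k+1=4t+9$ delivers exactly the claimed estimate \eqref{eq:NCO_HQh_obj-err}.

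I do not expect a genuinely hard step, since the heavy lifting is contained in Proposition~\ref{pro:linear_span_NCO} and Lemma~\ref{lem:NCO_KkBound}. The only point requiring attention is the weak-duality reduction: one must apply it to the rotated instance (the instance that actually exists for the given method $\cM$) rather than to the original, and one must use that $\xu^{(t)}$ and $\yu^{(t)}$ are the two coordinates of the single approximate-solution pair output by $\cM$. This is justified because Proposition~\ref{pro:linear_span_NCO} guarantees the rotated instance is again of the form \eqref{eq:NCOspecial} over compact Euclidean balls, hence a bona fide saddle-point problem for which \eqref{eq:weak-duality} holds. Everything else --- the index substitution and confirming the radii inherited from \eqref{eq:XY_HQh} --- is routine.
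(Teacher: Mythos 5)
Your proposal is correct and follows essentially the same route as the paper's own proof: set $k=2t+4$, instantiate \eqref{eq:NCOspecial} with the data \eqref{eq:HQh} and \eqref{eq:XY_HQh}, invoke Proposition~\ref{pro:linear_span_NCO} to pass to the rotated instance, chain \eqref{eq:linear_span_lb_NCO-1} with Lemma~\ref{lem:NCO_KkBound}, and finish with weak duality $\tilde\psi(\yu^{(t)})\le\tilde\psi^*\le\tilde\phi^*$ applied to the rotated instance. Your only (harmless) deviation is applying the weak-duality reduction at the start rather than at the end, and your explicit checks --- that $t<\frac m4-2$ gives $k<\frac m2$, that $t=\frac{k-4}{2}$ sits in the admissible range, and that $2k+1=4t+9$ --- match the paper exactly.
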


\begin{proof} 
	Set $k=2t+4<\frac{m}{2}$ and consider the problem instance described above \eqref{eq:XY_HQh}. Note that the data $(\vH,\vh,\vA,\vb,X,Y)$ satisfies the conditions required by Proposition \ref{pro:linear_span_NCO}. Hence, we can apply the proposition to obtain a rotated instance \eqref{eq:NCO_rotate}. From \eqref{eq:linear_span_lb_NCO-1} and Lemma \ref{lem:NCO_KkBound}, it follows that
	\begin{align}\label{eq:ineq-phi-spp}
	\tilde \phi(\xu^{(t)}) - \tilde \phi^* \ge \min_{\vx\in\cK_{k-1}} \phi(\vx) - \phi^*\ge \frac{L_fR_X^2}{16(4t+9)^2} + \left(\frac{\sqrt{2}+2}{4}\right)\frac{L_AR_XR_Y}{4t+9}.
	\end{align}
Let $\tilde\psi$ be the dual objective function of \eqref{eq:NCO_rotate}. Then since $\yu^{(t)}\in Y$, it holds $\tilde\psi(\yu^{(t)}) \le \tilde\psi^* \le \tilde \phi^*$, where the second inequality follows from the weak duality. Therefore we have the desired result from \eqref{eq:ineq-phi-spp} and by dropping the tilde in the notation.	
\end{proof}

\begin{remark}\label{rmk:NCO_HQh_lb-complexity}
We make two remarks here. First, the lower bound in \eqref{eq:NCO_HQh_obj-err} has exactly the same form as the upper bound in \eqref{eq:rate-nesterov-spp}, and they differ only on the constants. Hence, the order of the convergence rate result in \eqref{eq:rate-nesterov-spp} is not improvable, and one can only improve that result by possibly decreasing the constants. Second, in Theorem \ref{thm:NCO_HQh_lb-complexity}, $L_f\ge L_A$ is assumed. Without the assumption, the first term on the right hand side of \eqref{eq:NCO_HQh_obj-err}  becomes less important because it will be dominated by the second term for big $t$ if $R_X$ and $R_Y$ are regarded as constants. In fact,  assuming $L_f$ and $L_A$ both positive but without $L_f\ge L_A$, we can consider the following SPP instance
$$\min_{\vx\in X}\max_{\vy\in Y} f(\vx)+\langle \vb-\vA\vx, \vy\rangle$$
with $f(\vx) = L_f\left(\frac{1}{2}x_k^2 + \frac{1}{2}\sum_{i=2k+1}^{n}x_i^2\right)$, $(\vA,\vb)$ given in \eqref{eq:Ab} and 
$$X=\Set{\vx\in\RR^n|\|\vx\|^2\le R_X^2=k(2k+1)^2}, Y=\Set{\vy\in\RR^n|\|\vy\|^2\le R_Y^2=\frac{4L_f^2}{L_A^2}k^3}.$$
It is easy to see that the associated primal problem is
$$\phi^*:=\min_{\vx\in X}\big\{\phi(x):=f(\vx)+R_Y\|\vA\vx-\vb\|\big\}.$$
Let $\vx^*$ and $\vy^*$ be respectively given in \eqref{eq:sol-lam=0} and \eqref{eq:sol-lam=0-y}. Then it is easy to verify the optimality conditions in \eqref{eq:opt-cond-primal} hold, and in addition $\vx^*\in X$ and $\vy^*\in Y$. Hence $\vx^*$ is the optimal primal solution, and the optimal primal objective $\phi^*=\frac{L_fk^2}{2}$. From the proof of Lemma \ref{lem:ECO_Hh0_KkBounds}, we have that for any $\vx\in\cK_{k-1}$, $f(\vx)=0$ and
$\|\vA\vx-\vb\|^2\ge\frac{kL_A^2}{4}$. Hence,
$$\min_{\vx\in\cK_{k-1}} \phi(\vx)-\phi^* \ge \frac{\sqrt{k}L_A R_Y}{2}-\frac{L_fk^2}{2} = \frac{\sqrt{k}L_A R_Y}{4} =\frac{L_A R_XR_Y}{4(2k+1)}\ge \frac{L_f R_X^2}{6(2k+1)}.$$
Therefore, using Proposition \ref{pro:linear_span_NCO}, we can show that for $t\le \frac{m}{4}-2$, the following lower complexity bound holds:
\begin{equation}\label{eq:NCO_Hh0_obj-err}
\phi(\xu^{(t)})-\psi(\yu^{(t)})\ge\max\Set{\frac{L_f R_X^2}{6(4t+9)},\,\frac{L_A R_XR_Y}{4(4t+9)}}\ge \frac{L_f R_X^2}{12(4t+9)} + \frac{L_A R_XR_Y}{8(4t+9)}.
\end{equation}
We leave the details to the interested readers.
\end{remark}

We finish this section by showing a lower complexity bound for SPPs when the function $f(\vx)$ in \eqref{eq:saddle-prob} is strongly convex.

\begin{theorem}[lower complexity bound for SPPs with strong convexity]
	Let $8<m\le n$ and $t < \frac{m}{4}-2$ be positive integers, and $\mu$ and $L_A$ be positive numbers. Then for any first-order method $\cM$ described in \eqref{eq:I}, there exists a problem instance of \eqref{eq:saddle-prob} such that $f$ is $\mu$-strongly convex, $\|\vA\|=L_A$, $X$ and $Y$ are Euclidean balls with radii $R_X$ and $R_Y$ respectively, and the associated primal problem \eqref{eq:SPP} has a unique optimal solution $\hat{\vx}\in X$. In addition, 
	\begin{equation}\label{eq:lbd-complexity-scvx-spp}
	\|\xu^{(t)}-\hat\vx\|^2 \ge \frac{5L_A^2 R_Y^2}{256\mu^2 (4t+9)^2}
	\end{equation}
	and
	\begin{align}
		\label{eq:lbd-complexity-scvx-spp-phi}
		\phi(\xu^{(t)}) - \psi(\yu^{(t)}) \ge \frac{5L_A^2 R_Y^2}{512\mu (4t+9)^2},
	\end{align}
	where $\phi$ and $\psi$ are the associated primal and dual objective functions, and $(\xu^{(t)},\yu^{(t)})$ is the approximate solution output by $\cM$.
\end{theorem}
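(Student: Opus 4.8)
The plan is to mirror the proof of Theorem~\ref{thm:NCO_HQh_lb-complexity}, but to base the construction on the strongly convex affinely constrained instance of Theorem~\ref{thm:lb-complexity-lam=0-scvx} instead of the instance built from \eqref{eq:HQh}. The distance estimate \eqref{eq:lbd-complexity-scvx-spp} will follow by transporting the Krylov-subspace bound of Theorem~\ref{thm:lb-complexity-lam=0-scvx} to general first-order methods via Proposition~\ref{pro:linear_span_NCO}, and the gap estimate \eqref{eq:lbd-complexity-scvx-spp-phi} will be deduced from \eqref{eq:lbd-complexity-scvx-spp} using the quadratic growth of a strongly convex primal objective together with weak duality.

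First I would set $k=2t+4<\frac m2$ and take the instance of \eqref{eq:NCOspecial} with $\vH=\mu\vI$, $\vh=\vzero$, and $\vA,\vb$ as in \eqref{eq:Ab}, so that $f(\vx)=\frac\mu2\|\vx\|^2$ is $\mu$-strongly convex and $\|\vA\|=L_A$. Let $(\vx^*,\vy^*)$ be the unique primal--dual pair of the associated affinely constrained problem, with $\vx^*$ as in \eqref{eq:sol-lam=0} and $\vy^*$ as in \eqref{eq:sol-lam=0-y-scvx}. I would then choose the Euclidean balls $X=\{\vx:\|\vx\|\le R_X\}$ and $Y=\{\vy:\|\vy\|\le R_Y\}$ with $R_X=\|\vx^*\|$ and, crucially, $R_Y=\|\vy^*\|$. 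Since the KKT identities $\vH\vx^*=\vA^\top\vy^*$ and $\vA\vx^*=\vb$ hold and $\vx^*\in X$, $\vy^*\in Y$, the pair $(\vx^*,\vy^*)$ satisfies the saddle-point inequalities for $\Phi(\vx,\vy)=f(\vx)+\langle\vb-\vA\vx,\vy\rangle$ over $X\times Y$; and since $\phi(\vx)=\frac\mu2\|\vx\|^2+R_Y\|\vA\vx-\vb\|$ is $\mu$-strongly convex, its minimizer over $X$ is unique and equals $\vx^*$. This confirms that the instance has the required structure and a unique primal optimum.

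Next I would check the hypotheses of Proposition~\ref{pro:linear_span_NCO}: with $\vH=\mu\vI$ we have $\vH\cK_{2s-1}=\cK_{2s-1}\subseteq\cK_{2s}$ by \eqref{eq:KJexpan}, and $\vh=\vzero\in\cK_0$. Applying the proposition yields a rotated instance with primal--dual pair $(\hat\vx,\hat\vy)=(\vU^\top\vx^*,\vV^\top\vy^*)$, and by \eqref{eq:linear_span_lb_NCO-2},
\[
\|\xu^{(t)}-\hat\vx\|^2\ \ge\ \min_{\vx\in\cK_{k-1}}\|\vx-\vx^*\|^2 .
\]
The proof of Theorem~\ref{thm:lb-complexity-lam=0-scvx} already establishes $\min_{\vx\in\cK_{k-1}}\|\vx-\vx^*\|^2\ge\frac{5L_A^2\|\vy^*\|^2}{256\mu^2(k+1)^2}$. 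Because $\vU,\vV$ are orthogonal, $\|\hat\vy\|=\|\vy^*\|=R_Y$, and because $k+1=2t+5\le 4t+9=2k+1$, weakening the denominator yields \eqref{eq:lbd-complexity-scvx-spp}.

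Finally, for the gap bound I would use that the rotated primal objective $\tilde\phi$ is $\mu$-strongly convex (indeed $\tilde f(\vx)=f(\vU\vx)=\frac\mu2\|\vx\|^2=f(\vx)$) with minimizer $\hat\vx$ over the convex set $X$, so its first-order optimality condition gives the quadratic growth $\tilde\phi(\xu^{(t)})-\tilde\phi^*\ge\frac\mu2\|\xu^{(t)}-\hat\vx\|^2$ for the output $\xu^{(t)}\in X$. Combining this with \eqref{eq:lbd-complexity-scvx-spp}, with the weak duality $\tilde\psi(\yu^{(t)})\le\tilde\psi^*\le\tilde\phi^*$ from \eqref{eq:weak-duality}, and dropping the tildes as in the proof of Theorem~\ref{thm:NCO_HQh_lb-complexity}, gives
\[
\phi(\xu^{(t)})-\psi(\yu^{(t)})\ \ge\ \phi(\xu^{(t)})-\phi^*\ \ge\ \frac\mu2\cdot\frac{5L_A^2R_Y^2}{256\mu^2(4t+9)^2}=\frac{5L_A^2R_Y^2}{512\mu(4t+9)^2},
\]
which is \eqref{eq:lbd-complexity-scvx-spp-phi}. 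I expect the main obstacle to be the verification in the second paragraph: taking the tight radius $R_Y=\|\vy^*\|$, which places $\vy^*$ on the boundary of $Y$, while still guaranteeing that $(\vx^*,\vy^*)$ is a genuine saddle point and that $\vx^*$ is the unique minimizer of $\phi$ over $X$. This tightness is precisely what forces $R_Y^2$, rather than a larger quantity, into the final bounds; the remaining steps are direct applications of Proposition~\ref{pro:linear_span_NCO} and Theorem~\ref{thm:lb-complexity-lam=0-scvx}.
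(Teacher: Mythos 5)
Your proposal is correct and follows essentially the same route as the paper's proof: the same instance $f(\vx)=\frac{\mu}{2}\|\vx\|^2$ with $(\vA,\vb)$ from \eqref{eq:Ab} over Euclidean balls, verification of \eqref{eq:opt-cond-primal} via the KKT pair $(\vx^*,\vy^*)$ from Theorem \ref{thm:lb-complexity-lam=0-scvx}, transport through Proposition \ref{pro:linear_span_NCO} with $k=2t+4$, and the gap bound via $\mu$-quadratic growth plus weak duality. The only (harmless) deviation is your choice $R_Y=\|\vy^*\|$, where the paper instead inflates $R_Y^2$ to the clean expression $\frac{128\mu^2}{15L_A^2}k(k+1)^3(2k+1)\ge\|\vy^*\|^2$ and absorbs the slack into the constant; both choices yield the stated bounds, yours after weakening $(2t+5)^2$ to $(4t+9)^2$.
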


\begin{proof}
	Set $k = 2t+4<\frac{m}{2}$ and consider the problem instance of \eqref{eq:NCOspecial}, where $f(\vx) = \frac{\mu}{2}\|\vx\|^2$, $\vA$ and $\vb$ are those in \eqref{eq:Ab}, and
	\begin{align}
	\label{eq:XY_scvx}
	\begin{aligned}
	&X=\Set{\vx\in\RR^n|\|\vx\|^2\le R_X^2:=k(2k+1)^2},\\[0.1cm]
	& Y=\Set{\vy\in\RR^n|\|\vy\|^2\le R_Y^2:=\frac{128\mu^2}{15L_A^2}k(k+1)^3(2k+1)}.
	\end{aligned}
	\end{align} 
	From the proof of Theorem \ref{thm:lb-complexity-lam=0-scvx}, it is easy to verify that $\vx^*$ in \eqref{eq:sol-lam=0} and $\vy^*$ in \eqref{eq:sol-lam=0-y-scvx} satisfy $\vx^*\in X$, $\vy^*\in Y$, and the optimality condition in \eqref{eq:opt-cond-primal}.
	Since $f$ is strongly convex, $\vx^*$ must be the unique optimal solution to \eqref{eq:NCOspecial}. From \eqref{eq:bd-iter-diff} and also the definitions of $X$ and $Y$ in \eqref{eq:XY_scvx}, it follows that
	\begin{equation}\label{eq:min-x-cK-k-1}
	\min_{\vx\in\cK_{k-1}}\|\vx-\vx^*\|^2
	\ge \frac{5L_A^2 R_Y^2}{256\mu^2 (2k+1)^2}.
	\end{equation}
	Note that the data in the considered instance satisfy all the conditions in Proposition \ref{pro:linear_span_NCO}. Hence, we can apply the proposition to obtain the corresponding rotated instance \eqref{eq:NCO_rotate}, which has a unique optimal solution $\hat{\vx}\in X$. Now use \eqref{eq:min-x-cK-k-1} and \eqref{eq:linear_span_lb_NCO-2} to obtain \eqref{eq:lbd-complexity-scvx-spp} by recalling $k=2t+4$.  
	Moreover, by the strong convexity and the optimality of $\hat\vx$, we have
	\begin{align*}
		\tilde \phi(\xu^{(t)}) - \tilde \phi^* 
		\ge \frac{\mu}{2}\|\xu^{(t)} - \hat\vx\|^2,
	\end{align*}
Let $\tilde\psi$ be the corresponding dual objective function. Together with \eqref{eq:lbd-complexity-scvx-spp} and the fact $\psi(\yu^{(t)})\le \tilde\psi^*\le\tilde\phi^*$, the above inequality gives \eqref{eq:lbd-complexity-scvx-spp-phi} by dropping the tilde in the notation. Therefore, we complete the proof.
\end{proof}

\section{On the tightness of the established lower complexity bounds}\label{sec:tightness}

In this section, we compare the established lower complexity bounds to the best known upper complexity bounds. It turns out that the lower complexity bounds developed in this paper are tight in terms of the order, and thus they can be used to justify the optimality of first-order methods in the literature.

\subsection{Upper complexity bounds of first-order methods on affinely constrained problems}
In \cite{xu2018accelerated-pdc}, a first-order primal-dual block coordinate update method is analyzed. As there is one single block, i.e., applied to \eqref{eq:ECO}, and the initial iterate $(\vx^{(0)},\vy^{(0)})=(\vzero,\vzero)$, the convergence rate result about the objective \cite[eqn.(20a)]{xu2018accelerated-pdc} is
\begin{equation*}
\big|f(\vx^{(t)})-f^*\big| \le \frac{1}{t}\left[\frac{L_f+\beta\|\vA\|^2}{2}\|\vx^*\|^2+\frac{\max\big(4\|\vy^*\|^2, (1+\|\vy^*\|)^2\big)}{2\beta}\right],
\end{equation*}
where $\beta$ is a Lagrangian penalty parameter used in the algorithm. Let\footnote{Although it is impractical to set $\beta$ in this way, we argue that we can find such a $\beta$ theoretically and thus the result in \eqref{eq:upper-bd-lalm} is a valid upper bound.}
$$\beta=\frac{\max\big(2\|\vy^*\|,1+\|\vy^*\|\big)}{\|\vA\|\cdot \|\vx^*\|}.$$
The above convergence rate result becomes
\begin{equation}\label{eq:upper-bd-lalm}
\big|f(\vx^{(t)})-f^*\big| \le \frac{1}{t}\left[\frac{L_f}{2}\|\vx^*\|^2+\|\vA\|\cdot \|\vx^*\|\max\big(2\|\vy^*\|,1+\|\vy^*\|\big)\right],
\end{equation}
which coincides with \eqref{eq:ECO_Hh0_obj-err-lam=0} if $\|\vy^*\|\ge 1$ except the difference of a constant multiple.

The work \cite{ouyang2015accelerated} proposes an accelerated linearized alternating direction method of multipliers (AL-ADMM). Applying to \eqref{eq:ECO}, i.e., setting one block to zero, we have from one convergence rate result in \cite[eqn.(2.34)]{ouyang2015accelerated} that
\begin{equation}\label{eq:t2-ouyang}
f(\vx^{(t)})-f^*\le \frac{2L_f D_X^2}{t(t+1)}+\frac{2\|\vA\|D_XD_Y}{t+1},
\end{equation}
where $D_X$ and $D_Y$ are the diameters of the primal and dual feasible sets. If the size of the optimal primal and dual solutions is assumed, then the above result coincides with that in \eqref{eq:ECO_HQh_obj-err-lam=0-gen} up to the difference of a constant multiple.

For the strongly convex case, the result in \eqref{eq:lbd-complexity-scvx-sp} indicates that given any $\vareps>0$, to have an iterate within $\sqrt\vareps$-neighborhood of $\vx^*$, the iterate number is at least
\begin{equation}\label{lbd-t-scvx}
t=\left\lceil\frac{\sqrt{5}L_A\|\vy^*\|}{32\mu\sqrt{\vareps}}-\frac{5}{2}\right\rceil,
\end{equation}
where $\lceil a\rceil$ denotes the smallest integer no less than $a\in \RR$. 
In \cite[proof of Thm.4]{xu2017iter-complexity-ialm}, it is shown that 
\begin{equation}\label{eq:result-xu2017-ialm}
f(\vx^{(t)})-f(\vx^*)+\langle\vy^*,\vA\vx^{(t)}-\vb\rangle\le \frac{\|\vy^*\|^2}{2\rho_0}+\vareps_0,
\end{equation}
where $(\vx^*,\vy^*)$ is a pair of primal-dual solution, and $\vx^{(t)}$ is the output of Nesterov's optimal first-order method applied to a penalized problem after $t$ iterations. In addition, with $\rho_0=\frac{2\|\vy^*\|^2}{\mu\vareps}$ and $\vareps_0=\frac{\mu\vareps}{4}$ in \eqref{eq:result-xu2017-ialm}, \cite[eqn.(49)]{xu2017iter-complexity-ialm} shows that the iteration number $t$ satisfies:
\begin{equation}\label{ubd-t-scvx}
t\le 2\left(\sqrt{\frac{L_f}{\mu}}+\frac{2L_A\|\vy^*\|}{\mu\sqrt{\vareps}}\right)\big(O(1)+\log\frac{1}{\vareps}\big).
\end{equation} 
From the strong convexity of $f$, it follows that
$$f(\vx^{(t)})-f(\vx^*)+\langle\vy^*,\vA\vx^{(t)}-\vb\rangle\ge\frac{\mu}{2}\|\vx^{(t)}-\vx^*\|^2$$
which together with \eqref{eq:result-xu2017-ialm} gives $\|\vx^{(t)}-\vx^*\|^2\le \vareps$. Hence, the dominant term in the upper bound \eqref{ubd-t-scvx} is the same as that in \eqref{lbd-t-scvx} except for a log term.

\subsection{Upper complexity bounds of first-order methods on saddle-point problems}
For optimization problems in the form of \eqref{eq:SPP}, a smoothing technique is proposed in \cite{nesterov2005smooth}. It first approximates the nonsmooth objective function by a smooth one and then minimizes the smooth approximation function by an accelerated gradient method. In \cite{nesterov2005smooth}, it is shown 
that, if $X$ and $Y$ are compact with diameter $D_X$ and $D_Y$ respectively, and the total number of iterations  is pre-specified to $t$, then the convergence rate  of this smoothing scheme applied to \eqref{eq:SPP} is given in \eqref{eq:rate-nesterov-spp}.
Comparing the upper bound in \eqref{eq:rate-nesterov-spp} and the lower bound in \eqref{eq:NCO_HQh_obj-err}, we conclude that our lower complexity bound in Theorem \ref{thm:NCO_HQh_lb-complexity} is tight in terms of the order, and that Nesterov's smooth scheme is an optimal method for computing approximate solutions to bilinear SPPs in the form of \eqref{eq:saddle-prob}. 

Note that Theorem \ref{thm:NCO_HQh_lb-complexity} also confirms the optimality of several follow-up works of \cite{nesterov2005smooth}. For example, when the algorithms in \cite{chen2014optimal,chen2017accelerated} are applied to solve \eqref{eq:SPP}, their convergence rates all coincide with the lower bound in  \eqref{eq:NCO_HQh_obj-err} up to a constant multiple, and hence these methods are all optimal first-order methods for solving problems in the form of \eqref{eq:SPP}. 

In the literature, there have also been several results on either the saddle point or the variational inequality formulation of \eqref{eq:SPP} \cite{nemirovski2004prox,monteiro2011complexity,monteiro2013iteration,he2012-rate-drs,chambolle2011first}. When applied to solve \eqref{eq:SPP} with $f\equiv 0$ (and hence $L_f\le L_A$), those results all imply

\begin{align*}
\phi(\vx^{(t)}) - \phi^*= O\left(\frac{L_AD_XD_Y}{t}\right),
\end{align*}
where $D_X$ and $D_Y$ are the diameters of $X$ and $Y$.
The above result indicates the tightness of the lower bound in \eqref{eq:NCO_Hh0_obj-err}.

\section{Concluding remarks}\label{sec:conclusion}

On finding solutions to bilinear saddle-point problems, we have established lower complexity bounds of first-order methods that acquire problem information through a first-order oracle and are described by a sequence of updating rules. Through designing ``hard'' instances of convex quadratic programming, we first show the lower complexity bound results under a linear span assumption on solving affinely constrained problems. Then by a rotation invariance technique, we extend the results to general first-order methods that are still applied to affinely constrained problems. Finally, we establish the results for general first-order methods on solving bilinear saddle-point problems with compact primal and dual feasible regions. The established lower complexity bounds have been compared to several existing upper bound results. The comparison implies the tightness of our bounds and optimality of a few first-order methods in the literature.

We conclude the paper with a few more remarks. First, note that for affinely constrained problems, the feasibility residual in none of our results depends on the objective; see \eqref{eq:ECO_HQh_feas-err-lam=0} and \eqref{eq:ECO_HQh_feas-err-lam=0-gen} for example. 
This is reasonable because we can choose not to use the objective gradient though the oracle \eqref{eq:O} provides such information. However, towards finding an optimal solution, the objective information must be used. All existing works (e.g., \cite{lan2016iteration-alm, xu2017accelerated, chen2017accelerated}) on primal-dual first-order methods have objective-dependent quantity in their upper bounds on the feasibility error. One interesting question is how to derive a lower complexity bound of the feasibility residual that depends on the constraint itself and also the objective. To achieve that, we would need to enforce a minimum portion of objective information to be used in the solution update. Second, a few existing works \cite{lan2016gradient,lan2016conditional,lan2016accelerated} have shown that if $\nabla f$ is much more expensive than matrix-vector multiplication $\vA\vx$ and $\vA^\top\vy$, it could be beneficial to skip computing $\nabla f$ at the cost of more $\vA\vx$ and/or $\vA^\top\vy$. This setting is different from what we have made. In \eqref{eq:O}, we assume that one inquiry of the first-order oracle will obtain gradient and matrix-vector multiplications simultaneously. In the future work, we will allow multiple oracles that can return separate pieces of information, and we will pursue the lower bound of each oracle inquiry to reach a solution with desired accuracy and also design optimal oracle-based algorithms. Thirdly, in all our established results, we do not pre-specify the size of $X$ and $Y$ but allow them to be determined in the designed instances. That is the key reason why we obtain a lower complexity bound that looks greater than existing upper bound, e.g., by comparing \eqref{eq:rate-nesterov-spp} and \eqref{eq:NCO_Hh0_obj-err}. It is interesting to design ``hard'' instances to establish similar lower complexity bound results, provided that $L_f, L_A$ and the diameters of $X,Y$ are all given. We leave this to the future work.

\bibliographystyle{abbrv}

\end{document}